\definecolor{mycolor}{rgb}{0.122, 0.435, 0.698}% Rule colour
\newcommand{\mybox}[1]{%
  \begin{tcolorbox}[colframe=mycolor,boxrule=0.2pt,arc=4pt,left=0pt,right=0pt,top=0pt,bottom=0pt,boxsep=2pt] #1
  \end{tcolorbox}%
}
\theoremstyle{plain}
\newtheorem{theorem}{Theorem}
\newtheorem{lemma}[theorem]{Lemma}
\newtheorem{corollary}[theorem]{Corollary}
\theoremstyle{definition}
\newtheorem{definition}[theorem]{Definition}
\theoremstyle{remark}
\newtheorem{remark}[theorem]{Remark}
\DeclareMathOperator*{\esssup}{\rm ess\,sup}
\def\d#1{{#1\kern-0.4em\char"16\kern-0.1em}}
\def\D#1{{\raise0.2ex\hbox{-}\kern-0.4em #1}}
\newcounter{zd}
\newcounter{zdr}[subsection]
\def\sgn{\operatorname{sgn}}
\newcommand{\eps}{\varepsilon}
\def\mff{{\mathfrak f}}
\def\Div{{\rm div}}
\def\pa{\partial}
\def\cal{\mathcal}
\let\mib=\boldsymbol
\def\R{\mathbb{R}}    %promjena po sugestiji recezenta
\def\N{\mathbb{N}}    %promjena po sugestiji recezenta
\def\C{\mathbb{C}}    %promjena po sugestiji recezenta
\def\eps{\varepsilon}
\def\malpha{{\mib \alpha}}
\def\mx{{\bf x}}
\def\mxi{{\mib \xi}}
\def\my{{\bf y}}
\def\oi#1#2{(#1,#2)}   %promjena po sugestiji recezenta
\def\Rd{{\mathbb{R}^{d}}}    %promjena po sugestiji recezenta
\def\Sdmj{{\rm S}^{d-1}}
\def\Sd{{\rm S}^{d}}
\def\Sdj{{\rm S}^{d-1}}
\def\supp{{\rm supp\,}}
\begin{document}
%\today

\title[Heterogeneous degenerate parabolic equations]{Degenerate parabolic equations --  compactness and regularity of solutions}

\author{M.~Erceg}\address{Marko Erceg,
Department of Mathematics, Faculty of Science, University of Zagreb, Bijeni\v{c}ka cesta 30,
10000 Zagreb, Croatia}\email{maerceg@math.hr}

\author{D.~Mitrovi\'{c}}\address{Darko Mitrovi\'{c},
University of Vienna, Faculty of Mathematics, Oscar Morgenstern--platz 1,
1090 Vienna, Austria and\\
University of Montenegro, Faculty of Mathematics, Cetinjski put bb,
81000 Podgorica, Montenegro}\email{darko.mitrovic@univie.ac.at}

\subjclass[2010]{35K65, 34K33, 42B37.}

\keywords{degenerate parabolic equation, velocity averaging, regularity of solutions, discontinuous coefficients, heterogeneous diffusion, existence of solution, H-measures}

\begin{abstract}
We introduce a new method which resolves the problem of regularity and compactness of entropy solutions for nonlinear degenerate parabolic equations under non--degeneracy conditions on the sphere. In particular, we address a problem of regularity of entropy solutions to homogeneous (autonomous) degenerate parabolic PDEs and existence of weak solutions to heterogeneous degenerate parabolic PDEs (non-autonomous PDEs -- with flux and diffusion explicitly depending on the space and time variables).

The method of proof is reduction of the equation to a specific kinetic formulation involving two transport equations, one of the second and one of the first order. In the heterogeneous situation, this enables us to use (variants of) the H-measures to get velocity averaging lemmas and then, consequently, existence of a weak solution. In the homogeneous case, the kinetic reformulation makes it possible to get necessary estimates of the solution on the Littlewood-Paley dyadic blocks of the dual space.    
\end{abstract}

\maketitle

\section{Introduction}\label{sec:intro}

We consider the degenerate parabolic equation of the standard form

\begin{equation}\label{d-p}
\begin{split}
\pa_t u + \Div_\mx \bigl(\mff(t,\mx, u(t,\mx)) \bigr) =& \,\Div_\mx  \left( a(t,\mx, u(t,\mx)) \nabla_\mx u \right) \qquad \hbox{in} \ {\cal D}'(\R_+^{d+1})\;,
\end{split}
\end{equation}where $(t,\mx)\in \R_+^{d+1}:=[0,\infty)\times \R^d$, $u\in L_{loc}^2(\R_+^{d+1})$ is an unknown function, $\mff \in L^p(\R^{d+1}_+;C^1(\R))^d$, $p>1$, and $a\in C^1(\R^{d+1}_+\times \R;\R^{d\times d})$ are the coefficients whose properties we specify in what follows.  

To this end, the term 
$$
\Div_\mx( \mff(t,\mx,u(t,\mx)))
$$describes the convection/advection type movements corresponding to the modeled physical process, while the term 
$$
\Div_\mx  \left( a(t,\mx, u(t,\mx)) \nabla_\mx u \right)
$$ represents the evolution of the unknown quantity due to the diffusion effects (random motion of the unknown quantity from the areas where $u$ has smaller values toward the parts where it is higher). Since $a$ depends on $t$ and $\mx$, this implies that we are considering the process in an inhomogeneous medium (i.e.~its properties change with the position), which also demonstrates so-called memory effects (as the properties change with the time). Therefore, we refer to this equation as heterogeneous. One can also find the notion non-autonomous (see e.g. \cite{NPS}).

To be more precise, we assume
\begin{itemize}

\item[a)] $\mff \in L^p_{loc}(\R^{d+1}_+; C^1(\R)^d)$ and $f:= \pa_\lambda \mff \in L^p_{loc}(\R^{d+1}_+; C(\R))$, for some $p>1$;

\item[b)] $a \in C^{0,1}(\R^{d+1}_+\times \R; \R^{d\times d})$ and there exists $\sigma\in L^\infty(\R^{d+1}_+\times \R; \R^{d\times d})$ such that $\sigma=\sigma^T$ and $a=\sigma^2$ (both pointwisely on the whole domain). 
Thus, for $a(t,\mx,\lambda)=(a_{ij}(t,\mx,\lambda))_{i,j=1,\dots,d}$, we have
\begin{align}
\label{square-root}
a_{ij}(t,\mx,\lambda)&=\sum\limits_{k=1}^d \sigma_{ik}(t,\mx,\lambda)\sigma_{kj}(t,\mx,\lambda) \;; \\
\label{non-negativity}
\langle a(t,\mx,\lambda) \mxi\,|\,\mxi\rangle&=\sum\limits_{k=1}^d \left(\sum\limits_{j=1}^d \sigma_{kj}(t,\mx,\lambda)\xi_j \right)^2 \geq 0, \ \ \mxi \in \R^d \,,
\end{align}
i.e.~$a$ is a Lispchitz continuous pointwise symmetric positive semi-definite matrix function.

\item[c)] For every $\Omega\subset\subset \R^{d+1}_+$ and $K\subset\subset \R$, the non-degeneracy condition holds (see \cite[Theorem C]{LPT}):
\begin{equation}
\label{lp-111}
\esssup\limits_{(t,\mx)\in \Omega} \sup\limits_{(\xi_0,\mxi) \in \mathrm{S}^{d}} 
	\operatorname{meas}\bigl\{\lambda \in K :\, \xi_0+\langle f(t,\mx,\lambda)\,|\,\mxi
	\rangle=\langle a(t,\mx,\lambda)\mxi \,|\,\mxi \rangle=0 \bigr\}=0 	\,,
\end{equation}
where $\Sd$ denotes the unit sphere at the origin in $\R^{d+1}$, 
i.e.~$\mathrm{S}^d=\{(\xi_0,\mxi)\in\R^{d+1}:|(\xi_0,\mxi)|=1\}$.

\end{itemize} 

The equation describes various physical processes and therefore it appears in a wide spectrum of applications among which we mention flow in porous media (e.g. \cite{19CK, JMN}), sedimentation--consolidation processes \cite{9CK}, finances \cite{Polidoro}, etc. However, in most of the previous works, the coefficients are independent of $(t,\mx)$ which significantly restricts the range of models. To this end, we note that even in ideal laboratory conditions, one cannot achieve completely homogeneous medium \cite{JMN-1}.

From purely mathematical point of view, equation \eqref{d-p} is a parabolic equation which strongly degenerates meaning that there exists a moment $t\geq 0$ such that at some space point $\mx$ there exists a direction $\mxi\in \R^d \backslash \{0\}$ and the state $\lambda\in \R$ such that
$$
\langle a(t,\mx,\lambda) \mxi\,|\, \mxi \rangle=\sum\limits_{k,j=1}^d a_{kj}(t,\mx,\lambda) \xi_j \xi_k =0 \,.
$$
This essentially locally reduces the equation to a scalar conservation law and therefore one cannot expect, as in the case of strictly parabolic equations, solutions to remain regular along entire temporal axis. This causes a problem of uniqueness of  solutions unless appropriate entropy conditions are imposed. Such conditions appeared in \cite{VH} and they are by now a standard accompanying relation when considering \eqref{d-p} (see \eqref{e-c-n} below). 

In \cite{VH}, the authors resolved well-posedness of the Cauchy problem to \eqref{d-p} under regularity assumptions on the coefficients and BV--assumptions of the initial data (which imply BV--property of the entropy solution). Several generalisation under the mere assumption of boundedness of initial data were achieved later under special conditions on the diffusion matrix $a$ (it is diagonal or in the form of Laplacian of a scalar function; see e.g. \cite{Car, CdB, 24CK}) or in the one-dimensional situation \cite{4CK, 37CK}. The most general case of \eqref{d-p}, but with the coefficients independent of $(t,\mx)$ was resolved in \cite{CP}. The extension to the fully heterogeneous setting (with regular coefficients) is later achieved in \cite{BK, CK}. The ``breakthrough'' novelty in \cite{CP, CK} is the chain rule (see \cite[ Definition 2.1 (ii)]{CP} and \cite[Definition 2.1 (D.2.)]{CK}) and it is also a substantial ingredient of the proofs here.

In the current contribution, the flux $\mff$ appearing in \eqref{d-p} is discontinuous which significantly complicates the situation. By the best of our knowledge, the uniqueness question for \eqref{d-p} in the case of discontinuous flux is essentially intact and there are only handful of existence results under the discontinuous flux assumptions (see \cite{EMM, HKMP, LM2, MM, pan_jms}). However, in all of them one has either $(t,\mx)$-independent diffusion \cite{EMM, HKMP} or the diffusion with fixed degeneracy directions (so-called ultra-parabolic equations \cite{LM2, pan_jms, pan_aihp}). To be more succinct, let us now (informally) formulate the first main result of the paper. The result is precisely formulated and proved in Section 4. We note again that we are working with degenerate parabolic equations where neither the diffusion matrix is homogeneous nor the degeneracy direction is fixed. 

\mybox{
{\bf Theorem A.} If the coefficients $\mff$ and $a$ satisfy conditions {\rm a), b)}, and {\rm c)}, then for any $u_0 \in L^\infty(\Rd)\cap L^1(\R^d)$ equation \eqref{d-p} augmented with $u|_{t=0}=u_0(\mx)$ admits a weak solution.
}  

The main method when dealing with \eqref{d-p} with discontinuous flux is reduction of the equation to its kinetic counter-part (see \eqref{kinetic-n} here) and then proving a velocity averaging result. The velocity averaging phenomenon means that by averaging out a function  with respect to one of the variables we obtain a function of increased regularity. A little bit less general situation is to consider a  sequence of functions and then to obtain certain compactness properties of averages of the sequence of functions. This is initially noticed in \cite{Ago, nGPS} for transport equations of the type
\begin{equation}
\label{transport}
\begin{split}
\pa_t h_n + \Div_\mx \bigl(f(\lambda) h_n \bigr) =\pa_{\lambda} G_n(t,\mx,\lambda) + \Div_\mx P_n(t,\mx,\lambda) \qquad \hbox{in} \ {\cal D}'(\R^+\times \R^{d+1})\;,
\end{split}
\end{equation} where $(h_n)$ is a sequence bounded in $L^p(\R^+\times \R^{d+1})$, $p\geq 2$, while $(G_n)$ and $(P_n)$ are sequences which we specify later (see conditions d) and e) given at the beginning of Section \ref{sec:velocity_averaging}). The equation is linear (in $h_n$), but the right-hand side is singular 
(e.g.~it contains distributional derivatives of measures). Therefore, it is to be expected that the sequence of solutions $(h_n)$ to \eqref{transport} does not show regularity properties in the sense that it is strongly precompact in $L^p(\R^+\times \R^d \times \R)$. However, under the non-degeneracy conditions \eqref{lp-111} (with $a\equiv 0$) the sequence of averaged quantities $\int_\R \rho(\lambda) h_n(t,\mx,\lambda) d\lambda$, $\rho \in C^1_c(\R)$ is fixed, actually converges along a subsequence in $L^1_{loc}(\R^+\times \R^d)$. Moreover, a more specific result can be obtained under a quantitative variant of the non-degeneracy condition (see Theorem B below). In this case, the sequence of averages is bounded in $W^{s,r}_{loc}(\R^+\times \R^d)$ for some $s>0$ and $r\geq 1$. Several results in this direction can be found in \cite{EMM, nGPLS, Per, TT} and many others (see also references therein and the introduction of \cite{EMM} where a comprehensive overview on the subject can be found).

A generalisation of \eqref{transport} is given by the diffusive transport equation (i.e.~transport equation which also involves diffusion effects), having the form
\begin{equation}
\label{diffusive}
\begin{split}
\pa_t h_n+\Div_\mx \bigl(f(t,\mx,\lambda) h_n \bigr) =& \,\Div_\mx \bigl(\Div_\mx \left( a(t,\mx,\lambda) h_n \right)\bigr) \\
&\qquad + \pa_{\lambda} G_n(t,\mx,\lambda) + \Div_\mx P_n(t,\mx,\lambda) \qquad \hbox{in} \ {\cal D}'(\R^+\times\R^{d+1}) \,.
\end{split}
\end{equation} 
Despite the fact that it is physically more realistic (since it supposes to involve a standard physical phenomenon -- the diffusion), there are much less results regarding the velocity averaging of solutions to \eqref{diffusive}. 

Let us start this concise overview with \cite[Theorem C]{LPT}, where the authors formulated the velocity averaging result for \eqref{diffusive} with $(t,\mx)$-independent coefficients. However, the results is left without a proof and the proof cannot be found in any of the subsequent publications on this theme. 
Recently, in \cite{EMM}, the proof is provided for the $L^2$-framework and the diffusion satisfying condition b) above (with a slightly more regular matrix functions). 
On the other hand, in \cite{TT}, as well as in \cite{GH}, one can find a velocity averaging result for \eqref{diffusive}, still with $(t,\mx)$-independent coefficients, but under special non-degeneracy conditions. Roughly speaking, the conditions involve growth rate assumptions of the symbol as well as $\lambda$-derivative of the symbol of \eqref{diffusive} on the Littlewood-Paley dyadic blocks (see \cite[(2.3)]{GH} and \cite[(2.19), (2.20)]{TT}). Beside the fact that latter conditions are more restrictive than \eqref{lp-111}, it is complicated to check whether they are fulfilled. Here, we shall show that the regularity result for homogeneous degenerate parabolic equations holds without \cite[(2.20)]{TT} (see Theorem \ref{t-regularity} and Remark \ref{rem:non-deg}(i)).

%We also note that the procedure from \cite{GH} is (a highly non-trivial) extension of the results from \cite{TT} in the stochastic setting.

However, in both approaches given in \cite{GH,TT} and \cite{EMM} the fact that the diffusion matrix $a$ is 
positive semi-definite has been only mildly used. 
For instance, in the latter work we could also assume that $a=
  \left(\begin{array}{@{}c|c@{}}
    0 & 0 \\
    \hline
    0 & -\sigma \sigma^{_T} 
  \end{array}\right)$ (i.e.~that $a$ is negative semi-definite which should not demonstrate regularisation effects) and the same proof would work. The procedures described in \cite{GH, TT} present an even more general scenario, as the positive semi-definiteness of the matrix $a$ is only required in the proof of the so-called truncation property (refer to \cite[Section 2.4]{TT}). Consequently, no explicit sign constraint on $a$ is necessary, as demonstrated by examples such as $\operatorname{diag}(\lambda,\lambda^3)$, where $\lambda \in \R$ (which satisfies the truncation property). Therefore, the regularising effects that are typically expected from the diffusive term in \eqref{d-p} are not recast by the equation \eqref{diffusive} in the findings of \cite{EMM,GH,TT}. Consequently, improved regularity for the averaged quantities $\int_\R \rho(\lambda) h_n(t,\mx,\lambda) d\lambda$, $n\in \N$, should not generally be anticipated. In fact, for larger $t$, the potential presence of negative diffusion could lead to concentration effects and compromise the solution regularity.

Keeping this in mind, we introduce here a kinetic formulation of \eqref{d-p} which takes into account smoothing effects of the diffusion. 
The new conditions are given in f) below and they enable us to prove an appropriate velocity averaging lemma for \eqref{diffusive}. It can be shown (fairly directly from \cite{CK}) that a sequence of approximate solutions to \eqref{d-p} satisfies a kinetic equation whose properties are such that we can apply the new velocity averaging result. This leads us to an important application of the velocity averaging lemma -- existence of a weak solution to \eqref{d-p} given by Theorem A. The result represents a significant improvement with respect to the previous contributions since we allow explicit dependence on $(t,\mx)$ variables in both convection and diffusion terms (while in the convection term dependence might be even discontinuous).

In the final part of the paper, we address the question of regularity of entropy solutions to \eqref{d-p}, in the sense of \cite[Definition 2.2]{CP}, when the coefficients are $(t,\mx)$--independent. Again, to avoid unnecessary technicalities in the introduction, we shall informally formulate the second main result of the paper. The precise formulation is given in Section 5.

\mybox{
{\bf Theorem B.} Let $I\subseteq \R$ be a compact interval. Assume that the coefficients $\mff$ and $a$ from equation \eqref{d-p} are $(t,\mx)$-independent and satisfy {\rm a), b)} and the non-degeneracy condition:
\begin{equation}
\label{non-deg-stand}
(\forall\delta\in (0,\delta_0))\quad \sup\limits_{|(\xi_0,\mxi)|= 1} 
	\operatorname{meas}\Bigl\{\lambda \in I :\, \big|\xi_0+\langle \mff'(\lambda)\,|\,{\mxi}
	\rangle \big|^2+\langle a(\lambda){\mxi} \,|\,{\mxi} \rangle \leq \delta \Bigr\}\lesssim \delta^{\alpha}\,,
\end{equation}  
for some $\alpha, \delta_0>0$.

Then, the entropy solution to \eqref{d-p} belongs to the Sobolev space $W^{s,q}_{loc}(\R^+\times \R^d)$ for some constants $s>0$ and $q>1$.
}

The result of the latter theorem seems to be of particular interest and it has attracted significant attention for the last three decades. We note that it is also explicitly posed in \cite[page 1509]{TT} (how to use the entropy conditions to achieve an (improved) regularity result for degenerate parabolic equations). In the scenario where the diffusion equals zero, we encounter a scalar conservation law. In such cases, the regularity results are available across a broad time span. We mention some of the first results \cite{Ago, LPT} and a couple of recent results \cite{GL, Syl} where one can find numerous other references. 
When it comes to the case when the diffusion is not equal to zero, we are not aware of the result of the form given in Theorem B. We have already mentioned \cite{GH, TT} where the latter result holds under conditions which are more restrictive than the one of Theorem B, and they are checked only in special cases of \eqref{d-p} 
(see \cite[Subsection 2.4]{GH} and \cite[Corollaries 4.1--4.5]{TT}). In fact, using Theorem B we are able to improve the result of \cite[Corollary 4.5]{TT} and prove that the example demonstrates the regularisation effects for any $n,l$ (see Corollary \ref{TT-ex}).
As an important example of \eqref{d-p}, we have the porous media equation and the recent (optimal) regularity result given in \cite{Gess, GST} (see also references therein).

The paper is organised as follows. After the Introduction, we recall the necessary properties of the basic tools we are using -- H-measures, and at the same time we develop some new results. These results are then used in Section 3 to prove a velocity averaging result for diffusive transport equations, which is the key ingredient in the analysis carried out in the following section. 
Moreover, the techniques developed in Section 3 are the basis for deriving the results of Section 5.
In Section 4, we prove existence of weak solutions to Cauchy problems corresponding to \eqref{d-p} and in Section 5, we obtain the regularity result of entropy solutions for the autonomous (homogeneous) variant of \eqref{d-p}.

%Unlike the approach from \cite{AlM, EM} where we needed to use a scaled blow up approach or \cite{Frid, Kwon} where the equation is essentially such that it was possible split the 

\section{Multiplier operators and the H-measures}\label{sec:multipliers}

Micro-local defect measures (or H-measures) are discovered simultaneously by P.~G\'erard \cite{Ger} and L.~Tartar \cite{Tar}.
They describe loss of the strong $L^2$-compactness of sequences weakly converging in $L^2$. In other words, to each sequence weakly converging to zero in $L^2$, we can adjoin a H-measure. If the H-measure is zero, then the sequence in fact converges strongly. The latter notion was a step forward with respect to the defect measures by P.-L.~Lions \cite{Evans} since the H-measures involve dual variables as well. 

They appeared to be very useful in the velocity averaging results  \cite{EMM, Ger, LM2}, existence of traces and solutions to nonlinear evolution equations \cite{1, EM2022, HKMP, pan_jhde}, 
compensated compactness results to equations with variable coefficients \cite{Ger, MM, pan_aihp, Tar}, in homogenisation (from where the letter H in the title of the measure stems) \cite{ALjmaa, tar_book}, etc. 

Moreover, this has initiated a variety of different generalisations to the original micro-local defect measures which we call here micro-local defect functionals.
We mention parabolic, ultra-parabolic and adaptive variants of the H-measures \cite{ALjfa, EMM, pan_aihp}, H-measures as duals of Bochner spaces \cite{LM2},
H-distributions \cite{AEM, AM, LM3, MM}, micro-local compactness forms \cite{rindler}, variants with a scale \cite{AE, AEL, Tar2}, etc.

We denote by $\mx=(x_1,x_2,\dots,x_d)$ points (vectors) in $\Rd$ and by $\mxi=(\xi_1,\xi_2,\dots,\xi_d)$ we denote the corresponding dual variables in the sense of the Fourier transform (if $t$ occurs as above, then we use $\xi_0$ for the dual variable). 

We define the Fourier transform by $\hat{u}(\mxi) = {\cal F}u(\mxi) = \int_{\Rd} e^{-2\pi i \langle\mxi\,|\,\mx\rangle}u(\mx)\,d\mx$, and its inverse by $(u)^\vee(\mx) = \bar{\cal F}u(\mx) = \int_{\Rd} e^{2\pi i \langle\mxi\,|\,\mx\rangle}u(\mxi)\,d\mx$ (here $\langle\cdot\,|\,\cdot\rangle$ denotes the Euclidean scalar product on $\Rd$, while the same notation is used for $\mathbb{C}^d$ when we take it to be antilinear in the second argument). 
The Fourier multiplier operator is then defined by ${\cal A}_\psi u= (\psi\hat u)^\vee$. 

If ${\cal A}_\psi$ is bounded on ${L}^p(\Rd)$, $p\in (1,\infty)$, i.e.~as a linear map from ${L}^p(\Rd)$ to ${L}^p(\Rd)$, 
it is called the $L^p$-Fourier multiplier operator and the symbol $\psi$ we refer to as the 
$L^p$-Fourier multiplier. If, besides of $\mxi$, the symbol $\psi$ depends on $\lambda$ as well, then $\lambda$ is considered as a parameter.

A complete characterisation of $L^p$-Fourier multipliers for general $p\in (1,\infty)$ is still missing. 
However, in the case of $p\in \{1,2\}$ the situation is fully resolved \cite[theorems 2.5.8 and 2.5.10]{Gra}:
\begin{theorem}
\label{L1-multiplier}
Let $\psi:\Rd\to\C$ be a function and $\mathcal{A}_\psi$ the Fourier multiplier operator.
\begin{itemize}
\item[i)] ${\cal A}_{\psi}$ is an $L^1$-multiplier operator if and only if ${\cal F}^{-1}(\psi)$ is a finite Borel measure, and the norm is equal to the total variation of the measure ${\cal F}^{-1}(\psi)$.
If in addition ${\cal F}^{-1}(\psi)\in L^1(\Rd)$, then it holds $\|{\cal A}_{\psi}\|_{L^1\to L^1}=\|{\cal F}^{-1}(\psi)\|_{L^1(\Rd)}$.
\item[ii)] ${\cal A}_{\psi}$ is an $L^2$-multiplier operator if and only if $\psi$ is a bounded function, 
and the norm is equal to  $\|\psi\|_{L^\infty(\Rd)}$.
\end{itemize}
\end{theorem}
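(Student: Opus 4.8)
The plan is to treat the two parts separately: statement (ii) is an immediate consequence of Plancherel's theorem, while (i) rests on a weak-$*$ compactness (duality) argument. \textbf{For (ii):} by Plancherel's identity, $\|\mathcal{A}_\psi u\|_{L^2(\Rd)}=\|\psi\hat u\|_{L^2(\Rd)}$ for $u$ in the Schwartz class; H\"older's inequality gives $\|\psi\hat u\|_{L^2}\le\|\psi\|_{L^\infty}\|\hat u\|_{L^2}=\|\psi\|_{L^\infty}\|u\|_{L^2}$, which proves boundedness when $\psi\in L^\infty$ and yields $\|\mathcal{A}_\psi\|_{L^2\to L^2}\le\|\psi\|_{L^\infty(\Rd)}$. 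For the reverse inequality, if $\mathcal{A}_\psi$ is bounded with norm $C$ then, for any $M<\|\psi\|_{L^\infty}$ (or any $M>0$ when $\psi\notin L^\infty$), the set $\{|\psi|>M\}$ has positive measure and hence contains a subset $E$ of positive finite measure; testing on $u:=\mathcal{F}^{-1}(\mathbf{1}_E)\in L^2$ gives $\|\mathcal{A}_\psi u\|_{L^2}=\bigl(\int_E|\psi|^2\bigr)^{1/2}\ge M|E|^{1/2}=M\|u\|_{L^2}$, so $C\ge M$; letting $M\uparrow\|\psi\|_{L^\infty}$ gives the claimed identity (and unboundedness when $\psi\notin L^\infty$).

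\textbf{For the sufficiency in (i):} if $\mathcal{F}^{-1}\psi=\mu$ is a finite Borel measure then $\mathcal{F}\mu=\psi$, and for Schwartz $u$ the convolution theorem gives $\mathcal{A}_\psi u=(\psi\hat u)^\vee=\mu\ast u$; by Tonelli's theorem $\|\mu\ast u\|_{L^1}\le\int\!\!\int|u(\mx-\my)|\,d|\mu|(\my)\,d\mx=\|\mu\|_{\mathrm{TV}}\|u\|_{L^1}$, so $\mathcal{A}_\psi$ extends to a bounded operator on $L^1(\Rd)$ with $\|\mathcal{A}_\psi\|_{L^1\to L^1}\le\|\mu\|_{\mathrm{TV}}$, and when in addition $\mathcal{F}^{-1}\psi\in L^1(\Rd)$ one has $\|\mu\|_{\mathrm{TV}}=\|\mathcal{F}^{-1}\psi\|_{L^1(\Rd)}$.

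\textbf{For the necessity and sharpness in (i):} assume $\mathcal{A}_\psi$ is bounded on $L^1$ with norm $C$, and fix a Gaussian mollifier $\phi_\eps$ with $\|\phi_\eps\|_{L^1}=1$, so that $\widehat{\phi_\eps}\to1$ pointwise and $|\widehat{\phi_\eps}|\le1$. The functions $g_\eps:=\mathcal{A}_\psi\phi_\eps$ satisfy $\|g_\eps\|_{L^1}\le C$, hence $\{g_\eps\}$ is a bounded family in the space $\mathcal{M}(\Rd)$ of finite Borel measures, i.e.\ in the dual of $C_0(\Rd)$; by the Banach--Alaoglu theorem a subsequence converges weakly-$*$ to some $\mu\in\mathcal{M}(\Rd)$ with $\|\mu\|_{\mathrm{TV}}\le C$. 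On the other hand $\psi\widehat{\phi_\eps}\to\psi$ in $\mathcal{S}'(\Rd)$ by dominated convergence, so $g_\eps=\mathcal{F}^{-1}(\psi\widehat{\phi_\eps})\to\mathcal{F}^{-1}\psi$ in $\mathcal{S}'(\Rd)$; testing the two limits against Schwartz functions identifies $\mathcal{F}^{-1}\psi=\mu$. Thus $\mathcal{F}^{-1}\psi$ is a finite Borel measure with $\|\mathcal{F}^{-1}\psi\|_{\mathrm{TV}}\le C$, and together with the sufficiency part this gives $\|\mathcal{A}_\psi\|_{L^1\to L^1}=\|\mathcal{F}^{-1}\psi\|_{\mathrm{TV}}$.

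The genuinely delicate step is the necessity in (i): one must extract the limiting measure without assuming a priori that $\mathcal{F}^{-1}\psi$ is anything more than a tempered distribution, and then reconcile the weak-$*$ limit in $\mathcal{M}(\Rd)$ with the distributional limit $\mathcal{F}^{-1}\psi$; the mollification/compactness scheme above does exactly this, the only care being to choose mollifiers so that $\psi\widehat{\phi_\eps}\to\psi$ in $\mathcal{S}'$, which is automatic under the standing hypotheses making $\mathcal{A}_\psi$ densely defined. Everything else — part (ii) and the sufficiency in (i) — is a routine application of Plancherel's theorem, the convolution theorem, and Young's inequality.
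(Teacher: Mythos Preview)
Your proof is correct and is essentially the standard argument one finds in textbooks (Plancherel for part (ii); convolution with a finite measure and Young's inequality for sufficiency in (i); the mollification/weak-$*$ compactness scheme in $\mathcal{M}(\Rd)=\bigl(C_0(\Rd)\bigr)'$ for necessity in (i)). The paper, however, does not supply its own proof of this statement at all: it is quoted as a known result with a direct reference to Grafakos, \emph{Classical Fourier Analysis} (Theorems~2.5.8 and~2.5.10), so there is nothing to compare beyond noting that your argument is precisely the one given in that reference.
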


\begin{remark}\label{rem:L1Linfty}
Since any $L^1$-multiplier operator is an $L^\infty$-multiplier operator (the converse does not hold; cf.~\cite[Example 2.5.9]{Gra}), with the same norms,
under the assumption ${\cal F}^{-1}(\psi)\in L^1(\Rd)$ we also have 
$\|{\cal A}_{\psi}\|_{L^\infty\to L^\infty}=\|{\cal F}^{-1}(\psi)\|_{L^1(\Rd)}$.
\end{remark}

\begin{remark}\label{rem:Cd+1}
If $\psi\in C_c^{d+1}(\Rd)$, 
then ${\cal F}^{-1}\psi\in L^1(\Rd)$ and
$\|{\cal F}^{-1}\psi\|_{L^1(\Rd)} \lesssim \|\psi\|_{C^{d+1}(\Rd)}$ (in the paper we us the notation 
$\lesssim X$ in the standard way, meaning that  $\leq C X$ for a constant $C$ 
independent of parameters appearing in $X$).
Indeed, the claim follows by the fact that $(1+|\mxi|^2)^{-\frac{d+1}{2}}\in L^1(\Rd)$ and the estimate
$$
\|(1+|\mxi|^2)^{\frac{d+1}{2}}{\cal F}^{-1}\psi\|\lesssim 
	\max_{|\malpha|\leq d+1}\|\partial^\malpha\psi\|_{L^1(\Rd)} 
	\lesssim \|\psi\|_{C^{d+1}(\Rd)} \,,
$$
where we have used that each $\partial^\malpha\psi$,
$|\malpha|\leq d+1$, is bounded with compact support.
\end{remark}

In order to prove the $L^p$ boundedness in the case $p\in(1,\infty)\setminus\{2\}$, 
we are left only with several available sufficient conditions.
Here we present a corollary of the Marcinkiewicz multiplier theorem \cite[Corollary 5.2.5]{Gra}:
\begin{theorem}
	\label{thm:m1} Suppose that  $\psi\in C^{d}(\R^d\setminus\cup_{j=1}^d\{\xi_j= 0\})$ is a bounded function such that for some
	constant $C>0$ it holds
	\begin{equation*}
%	\label{c-mar} 
	|\mxi^{\malpha} \partial^{\malpha}
	\psi(\mxi)|\leq C,\ \
	\mxi\in \R^d\backslash \cup_{j=1}^d\{\xi_j= 0\}
	\end{equation*}
	for  every multi-index
	$\malpha=(\alpha_1,\dots,\alpha_d) \in {\N}_0^d$
	such that
	$|\malpha|=\alpha_1+\alpha_2+\dots+\alpha_d
	\leq d$. Then $\psi$ is an $L^p$-multiplier for
	any $p\in (1,\infty)$, and the operator norm of ${\cal A}_\psi$ equals 
	$C_{d,p} C$, where
	$C_{d,p}$ depends only on $p$ and $d$.
\end{theorem}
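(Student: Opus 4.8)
The plan is to obtain Theorem~\ref{thm:m1} as an immediate consequence of the Marcinkiewicz multiplier theorem in the pointwise form of \cite[Corollary 5.2.5]{Gra}. That result states that if a bounded function $m\in C^d\bigl(\R^d\setminus\cup_{j=1}^d\{\xi_j=0\}\bigr)$ satisfies
\begin{equation*}
\sup_{\mxi\in\R^d\setminus\cup_{j=1}^d\{\xi_j=0\}} |\mxi^{\mbeta}\partial^{\mbeta}m(\mxi)|\le A
\end{equation*}
for every ``dyadic'' multi-index $\mbeta\in\{0,1\}^d$ (these are the indices entering the product structure of the Marcinkiewicz condition), then $m$ is an $L^p$-multiplier for all $p\in(1,\infty)$ with $\|{\cal A}_m\|_{L^p\to L^p}\le C_{d,p}\,(A+\|m\|_{L^\infty(\R^d)})$, where $C_{d,p}$ depends only on $d$ and $p$.

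First I would record the elementary inclusion $\{0,1\}^d\subseteq\{\malpha\in\N_0^d:|\malpha|\le d\}$: any $\mbeta$ with entries in $\{0,1\}$ has $|\mbeta|=\beta_1+\dots+\beta_d\le d$. Hence the hypothesis of Theorem~\ref{thm:m1}, which imposes $|\mxi^{\malpha}\partial^{\malpha}\psi(\mxi)|\le C$ for \emph{every} multi-index with $|\malpha|\le d$, in particular supplies exactly the family of bounds required in \cite[Corollary 5.2.5]{Gra}, with $A=C$. The $C^d$-regularity assumed on $\psi$ off the coordinate hyperplanes is precisely what is needed to make sense of $\partial^{\mbeta}\psi$ for $|\mbeta|\le d$, and the choice $\malpha=0$ additionally gives $\|\psi\|_{L^\infty(\R^d)}\le C$ (boundedness of $\psi$ being anyway part of the hypothesis).

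Applying the corollary then yields that $\psi$ is an $L^p$-multiplier for every $p\in(1,\infty)$ with $\|{\cal A}_\psi\|_{L^p\to L^p}\le C_{d,p}\,(C+C)=2\,C_{d,p}\,C$, which is of the asserted form $C_{d,p}C$ after renaming the constant. There is no real obstacle here: the entire analytic substance is contained in the Marcinkiewicz theorem quoted from \cite{Gra}, and the only things to check are the combinatorial inclusion of the index sets and the bookkeeping of the constant.
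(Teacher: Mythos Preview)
Your proposal is correct and matches the paper's approach exactly: the paper does not prove this statement but simply presents it as (a restatement of) \cite[Corollary 5.2.5]{Gra}, and your argument spells out precisely why the hypotheses here imply those of that corollary via the inclusion $\{0,1\}^d\subseteq\{\malpha:|\malpha|\le d\}$.
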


We turn now to micro-local defect functionals that we shall use in the paper.
Let us first start with a brief overview of existing tools and approaches. 
The scaling in the classical H-measure with respect to all variables is the same, due to the projection
$\mxi\mapsto \mxi/|\mxi|$ (see Theorem \ref{bilinearboundedness} below). As a consequence, that means 
the H-measure (and corresponding variants) are suitable for differential relations where the ratio of 
the highest orders of derivatives in each variable is the same (cf.~\cite{ALjmaa}). 
Since that is not the situation we have in \eqref{d-p}, several variants and approaches were developed. 
Let us mention ultra-parabolic H-measures \cite{LM2, pan_aihp} for which the diffusion matrix $a$
is allowed to degenerate, but only in fixed directions. This assumption can be removed with adaptive H-measures
\cite{EMM}, however, they can be applied only for homogeneous $a$ (independent of $t$ and $\mx$).
Here we develop a different approach where we replace the equation \eqref{d-p} with two in which flux and diffusion 
are dominated, respectively. In that way we can keep the same scaling with respect to all variables, and 
the price to pay is that we have an additional equation to analyse.
Therefore, we are able to use an already developed variant (Theorem \ref{bilinearboundedness} below), 
but which we need to slightly generalise due to singular terms occurring in the auxiliary equation 
(Theorem \ref{H-for-diffusion} below). 

In what follows, $\mathcal{M}_{(loc)}(S)$
denotes the space of (locally) bounded Radon measures on $S\subseteq  \Rd$, and $\|\cdot\|_{\mathcal{M}(S)}$ stands 
for the total variation of a bounded Radon measure on $S$. 
By $L^2_{w*}(K;\mathcal{M}(S))$ we denote the dual of $L^2(K;C_0(S))$, which 
is a Banach space of weakly $*$ measurable functions $\mu:K\to \mathcal{M}(S)$
such that $\int_K \|\mu(\lambda,\cdot)\|^2_{\mathcal{M}(S)} d\lambda<\infty$.

% THEOREM BEGIN
\begin{theorem}\cite[Theorem 2.2]{LM5}\label{bilinearboundedness}
	Let $(u_n(\cdot_\mx,\cdot_\lambda))$ be bounded in ${L}^q(\R^d\times\R)$, for some $q> 2$, and 
	uniformly compactly supported on $\Omega \times K \subset\subset \R^d\times \R$.
	%weakly converging to zero in ${\rm L}^q(\R^d\times\R)$, for some $q> 2$.
	Let $(v_n(\cdot_\mx))$ be an $\Omega$--uniformly compactly supported sequence weakly-$\star$ converging to zero in
	${L}^\infty(\R^d)$.
	
	Then there exists a subsequence (not relabelled) 
	and a continuous functional $\mu\in L^{2}_{w*}(K;{\cal M}(\Omega\times \Sdmj))$ such that for every
	$\varphi\in {L}^{\frac{2q}{q-2}}(\Omega\times K)$ and $\psi\in{C}^d(\Sdmj)$ it holds
	\begin{equation}
	\label{mu-repr-h}
	%\begin{split}
	\mu(\varphi \psi) = \lim_{n\to\infty}\int_{\Omega\times K}\varphi(\mx,\lambda) u_n(\mx,\lambda)\overline{{\cal A}_{\psi(\mxi/|\mxi|)}(v_n)(\mx)}\; d\mx \,d\lambda \,.
	%&= \lim_{n\to\infty}\int_{\R^d \times K} \bigl(\widehat{\varphi 
	%	u_n(\cdot,\lambda)}\bigr)(\mxi)\psi(\pi_P(\mxi,\lambda),\lambda) \overline{\hat{v}_n(\mxi)} 
	%	\; d\mxi \,d\lambda\;,
	%\end{split}
	\end{equation} 
	%where $\hat{}$ denotes the Fourier transform with respect to $\mx$.
	Furthermore, the functional $\mu$ has the representation
	\begin{equation}
	\label{repr}
	\mu(\mx,\lambda,\mxi)=g(\mx,\lambda,\mxi) d\nu(\mx,\mxi) \,, 
	\end{equation} 
	where $g\in L^2\bigl(K;L^1(\Omega\times \Sdmj:\nu)\bigr)$ and $\nu\in\mathcal{M}(\Omega,\times\Sdmj)$ is a non-negative, bounded, scalar Radon measure. 
	Moreover, $\Sdmj$-projection of the measure $\nu$ is absolutely continuous with respect to the Lebesgue measure 
	$d\mx$, i.e.~there exists an $h\in L^{\frac{q}{q-2}}(\Omega)$ such that 
	$\int_{\Sdmj} d\nu(\mx,\mxi)=h(\mx)d\mx$.
\end{theorem}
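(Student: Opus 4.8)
The plan is to reduce the statement to an application of the classical H-measure construction for sequences in $L^2$, by localising the higher integrability $L^q$, $q>2$, into a weight. First I would note that since $(u_n)$ is bounded in $L^q(\Omega\times K)$ with $q>2$ and the domain $\Omega\times K$ has finite measure, $(u_n)$ is in particular bounded in $L^2$; thus, passing to a subsequence, $u_n\rightharpoonup u$ weakly in $L^q$ and weakly in $L^2$, and we may replace $u_n$ by $u_n-u$, so without loss of generality $u_n\rightharpoonup 0$. Similarly $(v_n)$ is bounded in $L^\infty$ on a fixed compact set, hence bounded in $L^2$, and $v_n\rightharpoonup 0$ weakly in $L^2$. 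The pair $(u_n,v_n)$ then defines, after a further subsequence, a standard parametrised H-measure $\mu\in L^2_{w*}(K;\mathcal M(\Omega\times\Sdmj))$ (this is the variant constructed in \cite{LM5}, quoted as the theorem here) satisfying the representation formula \eqref{mu-repr-h} for $\varphi\in L^2(\Omega\times K)$ and $\psi\in C^d(\Sdmj)$; the existence of such $\mu$ for $L^2$-bounded sequences is the content of \cite[Theorem 2.2]{LM5}.

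The next step is to upgrade the class of admissible test functions $\varphi$ from $L^2(\Omega\times K)$ to $L^{\frac{2q}{q-2}}(\Omega\times K)$. This is where the extra integrability of $(u_n)$ is used: by Hölder's inequality with exponents $\frac{2q}{q-2}$, $q$, and $2$ (note $\frac{q-2}{2q}+\frac1q+\frac12=1$),
\[
\Bigl|\int_{\Omega\times K}\varphi\, u_n\,\overline{{\cal A}_{\psi(\mxi/|\mxi|)}(v_n)}\,d\mx\,d\lambda\Bigr|
\le \|\varphi\|_{L^{\frac{2q}{q-2}}}\,\|u_n\|_{L^q}\,\|{\cal A}_{\psi(\mxi/|\mxi|)}(v_n)\|_{L^2}\,,
\]
and $\|{\cal A}_{\psi(\mxi/|\mxi|)}(v_n)\|_{L^2}\lesssim\|\psi\|_{L^\infty}\|v_n\|_{L^2}$ by Theorem \ref{L1-multiplier}(ii), so the bilinear form extends continuously. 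Hence $\varphi\mapsto\mu(\varphi\psi)$ is bounded on $L^{\frac{2q}{q-2}}(\Omega\times K)$ for each fixed $\psi$, and by density of $L^2$ in $L^{\frac{2q}{q-2}}$ on a finite-measure set the limit formula \eqref{mu-repr-h} persists for all such $\varphi$. One also checks $\psi\mapsto\mu(\varphi\psi)$ is bounded on $C^d(\Sdmj)$ uniformly, which already follows from the $L^2_{w*}(K;\mathcal M)$ membership.

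The heart of the statement is the disintegration \eqref{repr} together with the improved regularity: $g\in L^2\bigl(K;L^1(\Omega\times\Sdmj;\nu)\bigr)$ and the $\Sdmj$-projection $h$ of $\nu$ lying in $L^{\frac{q}{q-2}}(\Omega)$. To get this I would first produce the scalar measure $\nu$ by testing with $|u_n|$ in place of $u_n$ (or, more carefully, by a standard polarisation/monotonicity argument): the sequence $|u_n|^{?}$ is bounded in $L^{q/2}$... more precisely, one forms the H-measure $\tilde\mu$ associated to the pair $(|v_n|,|v_n|)$-type or uses that $u_n$ is $L^q$-bounded to define $\nu$ as a weak-$*$ limit of a positive bilinear quantity, and then shows $|\mu|\le w(\mx,\lambda)\,\nu$ for a suitable density. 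The key point for $h\in L^{\frac{q}{q-2}}$ is that the total mass of $\nu$ in $\lambda$ and $\mxi$ is controlled, by the Hölder estimate above applied with $\varphi$ ranging over a dense set, by $\sup_n\|u_n\|_{L^q}^{?}$ times an $L^\infty$-bound, which exhibits $\int_{\Sdmj}d\nu(\cdot,\mxi)$ as the weak-$*$ limit of products whose natural integrability is exactly the Hölder conjugate of $L^{\frac{2q}{q-2}}$ restricted in $\lambda$, namely $L^{\frac{q}{q-2}}(\Omega)$; and $g\in L^2$ in $\lambda$ comes from the defining property of $L^2_{w*}(K;\mathcal M(S))$, i.e.~$\int_K\|\mu(\lambda,\cdot)\|^2_{\mathcal M}\,d\lambda<\infty$, combined with Radon–Nikodym against $\nu$ in the $(\mx,\mxi)$ variables. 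I expect the main obstacle to be precisely this last disintegration step: constructing the common dominating scalar measure $\nu$ and verifying the two integrability exponents simultaneously requires care with the order of taking weak-$*$ limits (first extracting $\nu$, then the density $g$ as a weak limit in an $L^2_\lambda$-valued sense) and with the absolute continuity of the $\Sdmj$-projection, which ultimately rests on the fact that ${\cal A}_{\psi(\mxi/|\mxi|)}$ is a $0$-homogeneous (hence $L^2$-bounded but not compact) multiplier so that the defect in the $v_n$ variable carries no extra $\mx$-oscillation beyond what a measure absolutely continuous in $\mx$ can record. Since the statement is quoted from \cite{LM5}, I would in practice cite that proof for the disintegration and only reprove the $L^q$-to-$L^{\frac{2q}{q-2}}$ extension in detail.
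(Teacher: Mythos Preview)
The paper does not prove this theorem at all: it is quoted verbatim from \cite[Theorem 2.2]{LM5}, and the only additional remark in the paper is the one-line hint that the representation \eqref{repr} ``is obtain[ed] through the method of slicing of measures and the Radon--Nikodym theorem (see e.g.~\cite{Evans}).'' Your closing sentence---that in practice one cites \cite{LM5} for the disintegration---is therefore exactly what the paper does, and your sketch is already more detailed than anything the paper itself supplies.

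Two small comments on the sketch. First, the density claim ``$L^2$ dense in $L^{\frac{2q}{q-2}}$'' is backwards: since $\frac{2q}{q-2}>2$ for $q>2$, on the bounded set $\Omega\times K$ one has $L^{\frac{2q}{q-2}}\subset L^2$, not the other way. What you actually need is that the basic H-measure construction first yields \eqref{mu-repr-h} for \emph{continuous} (or bounded) $\varphi$, and then your H\"older estimate with exponents $\bigl(\frac{2q}{q-2},q,2\bigr)$ extends the limit identity to all $\varphi\in L^{\frac{2q}{q-2}}$ by density of $C_c$ in that space. Second, your description of how the dominating scalar measure $\nu$ is produced is, as you acknowledge, vague; the paper's hint (slicing of measures plus Radon--Nikodym) is consistent with your plan but does not fill in the details either---for those one really must go to \cite{LM5}.
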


The representation \eqref{repr} is obtain through the method of slicing of measures and the Radon-Nikodym theorem 
(see e.g.~\cite{Evans}).

To deal with the diffusion term, we shall need the following variant of the H-measures, which can be seen 
as a generalisation (in a particular case) of \cite[Theorem 2.2]{LM5} in which the sequence 
$(u_n)$ is allowed to be measure-valued.

\begin{theorem}
\label{H-for-diffusion}
Assume that $(\beta_n)$ is a sequence bounded in $L^2(\Omega;{\cal M}(K))$ for some $\Omega \subset\subset \R^d$ and $K\subset\subset \R$ in the sense that
\begin{equation}
\label{beta-bnd}
\int_\Omega \|\beta_n(\mx,\cdot)\|^2_{{\cal M}(K)} d\mx<c_\beta^2 \,,
\end{equation} for some constant $c_\beta>0$. Let $(v_n)$ be a bounded (by a constant $c_v>0$), $\Omega$--uniformly compactly supported sequence in ${L}^\infty(\Rd)$, converging to 0 in the weak--$\star$ sense.

Then, there exist subsequences (not relabelled) $(\beta_n)$ and $(v_n)$ 
and a measure $\mu\in{\cal M}(\R\times \Omega\times \Sdmj)$ such that for every
$\varphi\in C_0(\Omega)$, $\rho\in C_0(K)$, and $\psi\in{C}(\Sdmj)$ it holds
\begin{equation*}
%\label{mu-d}
%\begin{split}
\mu(\varphi \rho \psi ) = \lim_{n\to\infty}\int_{\Omega}\varphi(\mx) \int_K \rho(\lambda) d\beta_n(\mx,\lambda) \, \overline{{\cal A}_{\psi(\frac{\cdot}{|\cdot|})}(v_n)(\mx)}\; d\mx  \,.
%\end{split}
\end{equation*} 
\end{theorem}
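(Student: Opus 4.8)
The plan is to deduce Theorem~\ref{H-for-diffusion} from Theorem~\ref{bilinearboundedness} by a regularisation argument in the $\lambda$-variable. The idea is that convolving the measure-valued sequence $\beta_n$ in $\lambda$ with a smooth mollifier turns it into an honest $L^q$-sequence (in fact, with uniform compact support after cutting off), to which Theorem~\ref{bilinearboundedness} applies, and then one passes to the limit as the mollification parameter tends to zero. Concretely, fix a standard mollifier $\omega_\eps(\lambda)=\eps^{-1}\omega(\lambda/\eps)$ and set $u_n^\eps(\mx,\lambda):=(\beta_n(\mx,\cdot)*\omega_\eps)(\lambda)\,\chi(\lambda)$, where $\chi\in C_c^\infty(\R)$ equals $1$ on a neighbourhood of $K$. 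From \eqref{beta-bnd} and Young's inequality one checks that for each fixed $\eps>0$ the sequence $(u_n^\eps)$ is bounded in $L^2(\Omega;L^\infty(\R))\subseteq L^q(\Omega\times\R)$ for any $q>2$ (the compact support in both variables makes the $L^q$-norm finite and uniformly bounded, with a constant depending on $\eps$), so Theorem~\ref{bilinearboundedness} yields, along a subsequence, a functional $\mu^\eps$.

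The key identity to exploit is that testing $\mu^\eps$ against $\varphi\,\rho\,\psi$ and undoing the mollification gives
\[
\mu^\eps(\varphi\rho\psi)=\lim_n\int_\Omega \varphi(\mx)\int_K (\rho*\tilde\omega_\eps)(\lambda)\,d\beta_n(\mx,\lambda)\,\overline{{\cal A}_{\psi(\mxi/|\mxi|)}(v_n)(\mx)}\,d\mx,
\]
where $\tilde\omega_\eps(\lambda)=\omega_\eps(-\lambda)$, using that convolution moves onto the test function and that $\rho\chi=\rho$ for $\rho\in C_0(K)$. Since $\rho*\tilde\omega_\eps\to\rho$ uniformly, and since the right-hand side is, for each $n$, bounded by $c_\beta\,\|{\cal A}_{\psi(\mxi/|\mxi|)}(v_n)\|_{L^2}\lesssim c_\beta c_v\|\psi\|_{L^\infty(\Sdmj)}$ uniformly in $n$ and $\eps$ (using that $\psi(\mxi/|\mxi|)$ is a bounded $L^2$-multiplier by Theorem~\ref{L1-multiplier}(ii)), one gets that $\mu^\eps(\varphi\rho\psi)$ is Cauchy as $\eps\to0$, uniformly over bounded sets of test functions. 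Hence $\mu^\eps$ converges (in an appropriate weak sense, after a final diagonal extraction of subsequences in $n$ simultaneously with the limit in $\eps$) to a functional $\mu$ on $C_0(\Omega)\otimes C_0(K)\otimes C(\Sdmj)$; the uniform bound above shows $\mu$ extends to a bounded functional on $C_0(\Omega\times K\times\Sdmj)$, i.e.\ a measure $\mu\in{\cal M}(\R\times\Omega\times\Sdmj)$, and the stated limit representation holds by construction.

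I expect the main obstacle to be the order-of-limits bookkeeping: Theorem~\ref{bilinearboundedness} extracts a subsequence depending on $\eps$, so one must set up a diagonal argument that produces a \emph{single} subsequence of $(\beta_n),(v_n)$ along which all the $\mu^\eps$ and their $\eps\to0$ limit coexist. The clean way is to first extract a subsequence of $(v_n)$ so that the H-measure of $(v_n)$ itself is well-defined (independent of $\eps$), then note that $\mu^\eps(\varphi\rho\psi)$ is already determined by that fixed H-measure data together with the sequence $(\beta_n)$; a further single diagonal extraction over a countable dense family of test functions, combined with the equicontinuity estimate $|\mu^\eps(\varphi\rho\psi)|\lesssim c_\beta c_v\|\varphi\|_\infty\|\rho\|_\infty\|\psi\|_{L^\infty(\Sdmj)}$ and the uniform-in-$\eps$ Cauchy property, closes the argument via a standard density/continuity extension. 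A secondary technical point is verifying that $\psi\mapsto{\cal A}_{\psi(\mxi/|\mxi|)}$ behaves well for merely continuous $\psi$ on $\Sdmj$ (as opposed to $C^d$ as in Theorem~\ref{bilinearboundedness}); here the weaker $L^2$-setting for $v_n$ helps, since by Theorem~\ref{L1-multiplier}(ii) only boundedness of the symbol is needed for the uniform estimate, and $C^d$-density of smooth symbols handles the representation on the dense subclass.
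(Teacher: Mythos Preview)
Your reduction to Theorem~\ref{bilinearboundedness} via mollification in $\lambda$ is an appealing idea, but there is a real gap at the step where you claim that the uniform estimate
\[
|\mu^\eps(\varphi\rho\psi)|\lesssim c_\beta c_v\,\|\varphi\|_\infty\|\rho\|_\infty\|\psi\|_\infty
\]
``shows $\mu$ extends to a bounded functional on $C_0(\Omega\times K\times\Sdmj)$''. A bound of this trilinear type on elementary tensors does \emph{not} yield a bound by $\|\phi\|_{C_0(\Omega\times K\times\Sdmj)}$ for general $\phi$: for a finite sum $\phi=\sum_j\varphi_j\rho_j\psi_j$ one only gets $|\mu(\phi)|\le C\sum_j\|\varphi_j\|_\infty\|\rho_j\|_\infty\|\psi_j\|_\infty$, which can be arbitrarily large even when $\|\phi\|_\infty$ is small. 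In other words, you have shown $\mu$ lies in the dual of the projective tensor product, not in $\mathcal{M}(\Omega\times K\times\Sdmj)$. The measure bound coming from Theorem~\ref{bilinearboundedness} itself (via the $L^2_{w*}(K;\mathcal{M})$ structure and Cauchy--Schwarz in $\lambda$) does embed $\mu^\eps$ into $\mathcal{M}$, but the resulting total variation is controlled by the $L^q$-norm of $u_n^\eps$, which blows up like a negative power of $\eps$; so neither route gives an $\eps$-uniform total-variation bound, and the diagonal/density argument cannot close.

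The paper avoids this difficulty by working directly with the pre-limit functionals $\mu_n$ and estimating them on a dense class of test functions of the special form $\phi_N=\sum_{j=1}^N\chi_j\rho_j\psi_j$ with \emph{pairwise disjoint} characteristic functions $\chi_j$ in $\mx$. The disjointness is what allows passage from a sum-of-products bound to a genuine $\|\phi_N\|_\infty$ bound: one uses the commutation lemma to replace $\chi_j\,\mathcal{A}_{\psi_j}(v_n)$ by $\mathcal{A}_{\psi_j}(\chi_j v_n)$, then Plancherel and the $\ell^2$--$\ell^2$ Cauchy--Schwarz across $j$, exploiting $\chi_j\chi_k=0$ for $j\neq k$ to collapse the sums into single $L^2(\Omega)$-norms. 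This is the missing idea in your proposal; once you have it, no mollification or invocation of Theorem~\ref{bilinearboundedness} is needed --- the direct estimate already gives $\|\mu_n\|_{\mathcal{M}}\le c_\beta c_v\,\operatorname{meas}(\Omega)^{1/2}$ uniformly in $n$, and Banach--Alaoglu finishes.
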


\begin{proof}
Let us consider a sequence of functionals $(\mu_n)$ defined on $L^\infty(\Omega)\times C_0(K) \times C(\Sdmj)$ via
\begin{align*}
\langle \mu_n, \varphi\otimes \rho \otimes \psi \rangle:=\int_{\Omega} \varphi(\mx) \int_K \rho(\lambda) d\beta_n(\mx,\lambda) \, {\cal A}_{\psi(\frac{\cdot}{|\cdot|})}(v_n)(\mx) d\mx \,,
\end{align*} 
and extended by linearity to finite linear combinations. 
The strategy of the proof is to show that each $\mu_n$ can be extended to the functional on $C_0(\R\times\Omega\times\Sdmj)$, 
where they form a bounded sequence of measures on $\Omega \times K \times \Sdmj$. Then the result follows by applying the Banach-Alaoglu theorem on $(\mu_n)$ in the space 
$\mathcal{M}(\Omega \times K \times\Sdmj)=\bigl(C_0(\Omega\times K\times\Sdmj)\bigr)'$. 

To this end, we note that any $\phi\in C_0(\Omega\times K \times \Sdmj)$ can be uniformly approximated by functions of the form
$$
\phi_N(\mx,\lambda,\mxi):= \sum\limits_{j=1}^N \chi_j(\mx)\rho_j(\lambda)\psi_j(\mxi)  \,, \ \ \rho_j\in C_c(K)\,, \; \psi_j \in C^d(\Sdmj)\,,
$$
where $\chi_j$, $j=1,2,\dots, N$, are characteristic functions of mutually disjoint sets.
By the density argument, it is enough to show that $\langle\mu_n,\phi_N\rangle$ is bounded by $\|\phi_N\|_{L^\infty(\Omega\times K \times \Sdmj)}$ times a constant independent of $n$ and $N$.

%Let us start by noting that the sequence $(\mu_n)$ is clearly bounded on $L^\infty(\Omega)\times C_0(K) \times C^d(\Sdmj)$ since
%\begin{align*}
%\big|\langle \mu_n, &\varphi\otimes \rho \otimes \psi \rangle \big|\\
%&\leq \Bigl\| \varphi(\cdot) \int_K \rho(\lambda) d\beta_n(\cdot,\lambda)\Bigr\|_{L^2(\Omega)} 
%\bigl\| {\cal A}_{\psi(\mxi/|\mxi|)}(v_n)(\cdot) \bigr\|_{L^2(\Omega)}
%\\ & \leq {c_\beta} c_v \operatorname{meas}(\Omega)^{1/2}\, \|\varphi\|_{L^\infty(\Omega)} \|\rho\|_{C_0(K)} \|\psi\|_{C(\Sdmj)}\,,  
%\end{align*}
%where we have used that $\mxi\mapsto\psi(\mxi/|\mxi|)$ is bounded by $\|\psi\|_{C(\Sdmj)}$, implying that 
%$\mathcal{A}_{\psi(\frac{\cdot}{|\cdot|})}$ is an $L^2$-Fourier multiplier operator with the norm equal to
%$\|\psi\|_{C(\Sdmj)}$.

We have
\begin{align*}
\langle \mu_n, \phi_N  \rangle
	&= \int_{\Omega} \sum\limits_{j=1}^N \chi_j(\mx) \int_K \rho_j(\lambda) d\beta_n(\mx,\lambda) 
	\, {\cal A}_{\psi_j(\frac{\cdot}{|\cdot|})}(v_n)(\mx) d\mx
\\&=\int_{\Omega} \sum\limits_{j=1}^N \chi_j(\mx) \int_K \rho_j(\lambda) d\beta_n(\mx,\lambda) \, {\cal A}_{\psi_j(\frac{\cdot}{|\cdot|})}(\chi_j v_n)(\mx) d\mx \,,
\end{align*}
where in the last step we used the commutation lemma (Lemma \ref{izclankaokomutatoru} below; see also Theorem \ref{thm:m1}) together with the fact that $\chi_j=\chi_j^2$.
From here, using the Plancherel formula, the fact that $\chi_j \chi_k=0$ for $i\neq k$, and the discrete variant of the Cauchy-Schwartz inequality, we have
\begin{align*}
&\langle \mu_n, \phi_N  \rangle \\ 
& \leq \int\limits_{\Omega} \left( \sum\limits_{j=1}^N \Bigl|\psi_j\bigl(\frac{\mxi}{|\mxi|}\bigr) {\cal F}_{\mx} \Bigl(\chi_j(\cdot) \int_K \rho_j(\lambda) d\beta_n(\cdot,\lambda)\Bigr)(\mxi)\Big|^2 \right)^{1/2} \, \left(\sum\limits_{j=1}^N \big|{\cal F}_{\mx}(\chi_j v_n)(\mxi)\big|^2 \right)^{1/2}  d\mxi \\
& \leq \left(\sum\limits_{j=1}^N \|\psi_j\|_{C(\Sdmj)}^2 \Bigl\|\chi_j(\cdot ) \int_K \rho_j(\lambda) d\beta_n(\cdot,\lambda)\Bigr\|_{L^2(\Omega)}^2 \right)^{1/2}\, \left(\sum\limits_{j=1}^N\|  \chi_j v_n\|_{L^2(\Omega)}^2\right)^{1/2}\\
& \leq \Biggl\|\sum\limits_{j=1}^N \|\psi_j\|_{C(\Sdmj)} \chi_j(\cdot ) \int_K \rho_j(\lambda) d\beta_n(\cdot,\lambda)\Biggr\|_{L^2(\Omega)} \, \biggl\| \sum\limits_{j=1}^N \chi_j v_n\biggr\|_{L^2(\Omega)}\\
& \leq  \biggl\|\sum\limits_{j=1}^N \|\psi_j\|_{C(\Sdmj)} \,  \| \rho_j \|_{C_0(K)} \, \|\beta_n(\cdot_{\mx},\cdot_\lambda)\|_{{\cal M}(K)} \, \chi_j(\cdot )\biggr\|_{L^2(\Omega)} 
	\, \| v_n\|_{L^2(\Omega)}\\
& \leq  c_\beta c_v \operatorname{meas}(\Omega)^{1/2} \, \big\| \sum\limits_{j=1}^N  \chi_j  \, \rho_j \, \psi_j \big\|_{L^\infty(\Omega\times K \times \Sdmj)} \,,
\end{align*} 
where in the last step we used
$$
\sum\limits_{j=1}^N \|\psi_j\|_{C(\Sdmj)} \,  \|\rho_j\|_{C(K)} \, \chi_j(\mx)= \Bigl\|\sum\limits_{j=1}^N \chi_j({\mx}) \rho_j(\cdot_{\lambda}) \psi_j(\cdot_{\mxi}) \Bigr\|_{C(K\times \Sdmj)} \,.
$$ 
Therefore, the density argument provides that each functional $\mu_n$ can be extended 
on $C_0(\Omega\times K \times \Sdmj)$ and 
$\|\mu_n\|_{\mathcal{M}(\Omega\times K\times\Sdmj)}\leq c_\beta c_v\operatorname{meas}(\Omega)^{1/2}$. 
Thus, $(\mu_n)$ is a bounded sequence of measures on $\Omega\times K\times\Sdmj$, 
which should have been shown. 
\end{proof}

A variant of the First commutation lemma (which is given in \cite[Lemma 1]{1stcommlemm}; see also Remark 2 in the mentioned reference) reads:

\begin{lemma}\label{izclankaokomutatoru}
	Let $(v_n)$ be a bounded, uniformly compactly supported sequence in ${L}^\infty(\Rd)$, converging to 0 in the sense of
	distributions, and let $\psi\in C^d(\Sdmj)$.
%	Let $\psi\in{C}^d(\Rd\!\setminus\!\{0\})\cap{L}^\infty(\Rd)$ be an ${L}^p$-multiplier, 
%	for any $p\in\oi 1\infty$, which satisfies
%	\begin{equation}\label{eq:compactness_cond}
%	\lim_{|\mxi|\to\infty}\sup_{|{\bf h}|\leq1}\left| \psi(\mxi+{\bf h}) - \psi(\mxi) \right|=0 \;.
%	\end{equation}
	
	Then for any $b\in{L}^\infty(\Rd)$ and $r\in\oi1\infty$ the following holds:
	$$
	b{\cal A}_{\psi(\frac{\cdot}{|\cdot|})}(v_n)-{\cal A}_{\psi(\frac{\cdot}{|\cdot|})}(b v_n) \longrightarrow 0 \quad
	\hbox{strongly in} \quad {L}^r_{loc}(\Rd) \;.
	$$
\end{lemma}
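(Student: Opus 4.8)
The plan is to combine a density reduction for the coefficient $b$ with a compactness property of the commutator operator. The key preliminary observation is that $\mxi\mapsto\psi(\mxi/|\mxi|)$ is homogeneous of degree zero, so by the chain rule $|\mxi^{\malpha}\partial^{\malpha}[\psi(\mxi/|\mxi|)]|\lesssim\|\psi\|_{C^{|\malpha|}(\Sdmj)}$ whenever $|\malpha|\leq d$; hence, by Theorem~\ref{thm:m1}, ${\cal A}_{\psi(\cdot/|\cdot|)}$ is bounded on every $L^r(\Rd)$, $r\in(1,\infty)$, with $\|{\cal A}_{\psi(\cdot/|\cdot|)}\|_{L^r\to L^r}\lesssim\|\psi\|_{C^d(\Sdmj)}$. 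In particular, writing $C_b w:=b\,{\cal A}_{\psi(\cdot/|\cdot|)}(w)-{\cal A}_{\psi(\cdot/|\cdot|)}(bw)$, i.e.~$C_b=M_b{\cal A}_{\psi(\cdot/|\cdot|)}-{\cal A}_{\psi(\cdot/|\cdot|)}M_b$ with $M_b$ multiplication by $b$, we get $\|C_b w\|_{L^r}\lesssim\|b\|_{L^\infty}\|w\|_{L^r}$ for all $b\in L^\infty(\Rd)$, $w\in L^r(\Rd)$. Now let $\Omega\subset\subset\Rd$ contain the supports of all $v_n$ and let $\Omega'\subset\subset\Rd$ be fixed. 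Since $bv_n=({\bf 1}_\Omega b)v_n$ and the values of $b\,{\cal A}_{\psi(\cdot/|\cdot|)}(v_n)$ on $\Omega'$ depend only on $b|_{\Omega'}$, we may assume $b$ has compact support, hence $b\in L^s(\Rd)$ for every finite $s$. Approximating $b$ in $L^s$ (with $s>r$) by $b_k\in C^\infty_c(\Rd)$ and using Hölder's inequality on $\Omega'$, the $L^p$-boundedness of ${\cal A}_{\psi(\cdot/|\cdot|)}$ for all $p\in(1,\infty)$, and the uniform bounds on $\|v_n\|_{L^\infty}$ and $\|v_n\|_{L^p}$ (all $v_n$ sharing the compact support $\Omega$), one obtains $\|C_b v_n-C_{b_k}v_n\|_{L^r(\Omega')}\lesssim\|b-b_k\|_{L^s}$ uniformly in $n$. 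Therefore it suffices to prove $C_b v_n\to0$ in $L^r_{loc}(\Rd)$ for $b\in C^\infty_c(\Rd)$.

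For such $b$ (and, more generally, for $b\in C_0(\Rd)$) this is essentially \cite[Lemma~1]{1stcommlemm}; we recall the mechanism, the only extra ingredient here being the passage to exponents $r\neq2$. It is enough to show that, after restriction of the inputs to $\Omega$ and of the outputs to the compact set $\Omega'$, $C_b$ is a compact operator on $L^r$: indeed $(v_n)$ is bounded in $L^\infty$, supported in $\Omega$, and converges to $0$ in ${\cal D}'$, hence weakly-$\star$ in $L^\infty(\Omega)$ and so weakly in $L^r(\Omega)$ for every $r\in(1,\infty)$, and a compact operator maps weakly convergent sequences to strongly convergent ones. By a standard mollification on the compact manifold $\Sdmj$ choose $\psi_j\in C^\infty(\Sdmj)$ with $\|\psi-\psi_j\|_{C^d(\Sdmj)}\to0$; the operator-norm bound above shows that $C_b$ is the $L^r$-operator-norm limit of the commutators $C^{(j)}_b$ built from $\psi_j$, so (since compact operators form a closed subspace) it suffices to prove that $C^{(j)}_b$ is compact for each fixed $j$, i.e.~we may assume $\psi\in C^\infty(\Sdmj)$. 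Then ${\cal A}_{\psi(\cdot/|\cdot|)}$ is a constant multiple of the identity plus principal-value convolution with a kernel $K$ which is smooth away from the origin, homogeneous of degree $-d$ and of zero mean on $\Sdmj$; hence $C_b$ is the integral operator with kernel $(b(\mx)-b(\my))K(\mx-\my)$, the identity parts cancelling. Because $|b(\mx)-b(\my)|\leq\|\nabla b\|_{L^\infty}|\mx-\my|$ and $|K(\mx-\my)|\lesssim|\mx-\my|^{-d}$, this kernel is $\lesssim|\mx-\my|^{1-d}$, i.e.~weakly singular on the bounded set $\Omega'\times\Omega$ where it is relevant; truncating it away from the diagonal and applying the Schur test exhibits $C_b$ as an operator-norm limit of integral operators with bounded kernels over a bounded domain, hence as a compact operator on $L^r$.

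The substantive point is the compactness of this localised commutator, and within it two steps deserve care: the cancellation of the (distributional) singular part of the kernel of ${\cal A}_{\psi(\cdot/|\cdot|)}$ against the Lipschitz increment $b(\mx)-b(\my)$, which is what turns the commutator into a weakly singular integral operator, and the reduction from the given $C^d$-symbol to a smooth one, which relies on the Marcinkiewicz-type estimate $\|{\cal A}_{g(\cdot/|\cdot|)}\|_{L^r\to L^r}\lesssim\|g\|_{C^d(\Sdmj)}$ furnished by Theorem~\ref{thm:m1}. The density reduction for $b$ and the passage from weak to strong convergence via compactness are then routine.
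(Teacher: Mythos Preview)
The paper does not give its own proof of this lemma; it merely quotes the result from \cite[Lemma~1]{1stcommlemm} (together with Remark~2 there). Your argument is correct and follows the standard route for such commutation lemmas: first reduce $b$ to $C^\infty_c$ by a density argument that relies on the $L^r$-boundedness of ${\cal A}_{\psi(\cdot/|\cdot|)}$ coming from Theorem~\ref{thm:m1}; then reduce $\psi$ to $C^\infty(\Sdmj)$ via the same operator-norm estimate, so that ${\cal A}_{\psi(\cdot/|\cdot|)}$ is a multiple of the identity plus a classical Calder\'on--Zygmund singular integral; and finally observe that the commutator with a Lipschitz $b$ has the weakly singular kernel $(b(\mx)-b(\my))K(\mx-\my)=O(|\mx-\my|^{1-d})$, which on the bounded set $\Omega'\times\Omega$ defines a compact operator on $L^r$ (by truncation near the diagonal and the Schur test). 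Since $(v_n)$ converges weakly to zero in every $L^r(\Omega)$, compactness of the localised commutator yields the desired strong convergence. This is precisely the mechanism underlying the cited reference, so there is nothing to contrast.
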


We will also need the following estimate involving measure-valued functions 
of the type appearing in the previous theorem.

\begin{lemma}
	\label{L-ocjena-L1} 
	Let $\psi\in C(\R;C_c^{d+1}(\R^d))$ and $\rho \in C_c(\R)$. Let $\beta\in L_c^1(\R^d;{\cal M}(\R))$. 
	Denote
	\begin{align*}
	&\int_{\R}\Bigl({\cal A}_{ \psi(\mxi,\lambda)} \rho(\lambda)\,d\beta(\cdot,\lambda)\Bigr)(\mx)\\
	&\quad :=
	\int_{\R^{2d}} e^{2\pi i (\mx-\my)\cdot \mxi}\int_{\R}\psi(\mxi , \lambda) \rho(\lambda)\,d\beta(\my,\lambda)  \, d\my\,d\mxi
	={\cal F}^{-1}\Big(\int_{\R} \psi(\cdot,\lambda) \rho(\lambda)\,d\hat{\beta}(\cdot,\lambda) \Bigr)(\mx) \,,
	\end{align*}
	where $d\hat{\beta}(\cdot,\lambda)$ is the Fourier transform of $\beta$ with respect to the first variable.

	Then for every $\rho\in C_c(\R)$ it holds:
	\begin{equation*}
%	\label{ocjena-L1}
	\begin{split}
	\Bigl\|\int_{\R}{\cal A}_{ \psi(\mxi,\lambda)} & \rho(\lambda)\,d\beta  \Bigr\|_{L^1(\R^d)} \\& 
	\lesssim_{\supp(\rho\psi)} \| \psi\|_{C(\supp\rho;C^{d+1}(\R^{d}))}  \,  \Bigl\| \int_{\R} |\rho(\lambda)|\,d|\beta|  \Bigr\|_{L^1(\R^d)} \,,
	\end{split}
	\end{equation*}
	where, for any $\mx\in\Rd$, $|\beta|=|\beta|(\mx,\cdot)$ denotes the total variation measure of $\beta=\beta(\mx,\cdot)$.
\end{lemma}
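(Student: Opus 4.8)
The plan is to reduce the estimate to the scalar bound of Remark~\ref{rem:Cd+1} by realising the operator $\int_\R \mathcal{A}_{\psi(\mxi,\lambda)}\rho(\lambda)\,d\beta$ as a superposition over $\lambda$ of convolutions against the kernels $\bigl(\mathcal{F}^{-1}\psi\bigr)(\cdot,\lambda)$ (inverse Fourier transform in the $\mxi$-variable), and then to interchange the $L^1(\R^d_{\mx})$-norm with the remaining integrations.

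First I would rewrite the defining double integral. Since $\psi\in C(\R;C_c^{d+1}(\R^d))$, for every $\lambda$ the symbol $\psi(\cdot,\lambda)$ is supported in a fixed compact subset of $\R^d$; together with $\|\beta(\my,\cdot)\|_{\mathcal{M}(\R)}<\infty$ for a.e.\ $\my$, the integrand is absolutely integrable with respect to $d\mxi\otimes d\beta(\my,\cdot)$, so Fubini's theorem lets me carry out the $\mxi$-integration first and obtain
\[
\Bigl(\int_\R \mathcal{A}_{\psi(\mxi,\lambda)}\rho(\lambda)\,d\beta\Bigr)(\mx)
	= \int_{\R^d}\int_\R \rho(\lambda)\,\bigl(\mathcal{F}^{-1}\psi\bigr)(\mx-\my,\lambda)\,d\beta(\my,\lambda)\,d\my\,.
\]
(The same interchange yields the second representation in the statement, with $\hat\beta$ the $\mx$-Fourier transform of $\beta$.) Applying Remark~\ref{rem:Cd+1} to differences of symbols shows that $\lambda\mapsto\bigl(\mathcal{F}^{-1}\psi\bigr)(\cdot,\lambda)$ is continuous from $\R$ into $L^1(\R^d)$; in particular it admits a jointly measurable representative, so the integrand above and its modulus are jointly measurable in $(\mx,\my,\lambda)$ against $d\mx\otimes d|\beta|(\my,\lambda)\otimes d\my$ — all that is needed for the next step.

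Next I would take the modulus, integrate in $\mx$, pull the modulus inside both integrals (triangle inequality, using $\bigl|\int f\,d\beta\bigr|\le\int|f|\,d|\beta|$), and then apply Tonelli to swap the $\mx$-integral with the $\my$- and $\lambda$-integrations:
\[
\Bigl\|\int_\R \mathcal{A}_{\psi(\mxi,\lambda)}\rho\,d\beta\Bigr\|_{L^1(\R^d)}
	\le \int_{\R^d}\int_\R |\rho(\lambda)|\,\Bigl(\int_{\R^d}\bigl|\bigl(\mathcal{F}^{-1}\psi\bigr)(\mx-\my,\lambda)\bigr|\,d\mx\Bigr)\,d|\beta|(\my,\lambda)\,d\my\,.
\]
By translation invariance the inner integral equals $\|(\mathcal{F}^{-1}\psi)(\cdot,\lambda)\|_{L^1(\R^d)}$, and for $\lambda\in\supp\rho$ (elsewhere $\rho(\lambda)=0$) Remark~\ref{rem:Cd+1} bounds it by $C\,\|\psi(\cdot,\lambda)\|_{C^{d+1}(\R^d)}\le C\,\|\psi\|_{C(\supp\rho;C^{d+1}(\R^d))}$, with $C$ depending only on $d$ and the $\mxi$-support of $\psi$. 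Substituting and recognising the remaining double integral as $\bigl\|\int_\R|\rho(\lambda)|\,d|\beta|\bigr\|_{L^1(\R^d)}$ gives the claimed inequality, the implicit constant depending on $\supp(\rho\psi)$ through $\supp\rho$ and the $\mxi$-support of $\psi$.

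The computation is short; the only delicate points are the two measure-theoretic interchanges. The first — making sense of the oscillatory $\mxi$-integral against the measure $\beta(\my,\cdot)$ and pulling it out — is controlled by the uniform compactness of the $\mxi$-support of $\psi$ together with the a.e.\ finiteness of $\|\beta(\my,\cdot)\|_{\mathcal{M}(\R)}$. The second — the Tonelli step for the nonnegative integrand — needs the joint measurability above, which follows from the continuity of $\lambda\mapsto(\mathcal{F}^{-1}\psi)(\cdot,\lambda)$ in $L^1(\R^d)$ guaranteed by $\psi\in C(\R;C_c^{d+1}(\R^d))$. No sign or nondegeneracy property of $\beta$ is used.
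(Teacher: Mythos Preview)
Your proof is correct and in fact slightly more direct than the paper's. The paper does not pass through the convolution kernel $\bigl(\mathcal{F}^{-1}\psi\bigr)(\cdot,\lambda)$; instead it discretises in $\lambda$, approximating $\psi$ by step functions $\psi_n(\mxi,\lambda)=\sum_j\psi(\mxi,\lambda_j)\chi^n_j(\lambda)$, applies the $L^1$-multiplier bound from Theorem~\ref{L1-multiplier} and Remark~\ref{rem:Cd+1} to each fixed symbol $\psi(\cdot,\lambda_j)$, and then lets $n\to\infty$. Your route replaces the approximation-and-limit step by a single Fubini/Tonelli interchange, at the price of having to justify measurability against the product of Lebesgue measure and the signed measures $\beta(\my,\cdot)$ --- a point you handle correctly via the $L^1(\R^d)$-continuity of $\lambda\mapsto\mathcal{F}^{-1}\psi(\cdot,\lambda)$. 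The paper's discretisation sidesteps those measure-theoretic checks but needs the limiting argument; your kernel representation makes the structure of the bound (a convolution estimate followed by translation invariance) completely transparent. Either way one lands on Remark~\ref{rem:Cd+1} for the key scalar inequality $\|\mathcal{F}^{-1}\psi(\cdot,\lambda)\|_{L^1}\lesssim\|\psi(\cdot,\lambda)\|_{C^{d+1}}$, so the two arguments are equivalent in strength.
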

\begin{proof}
	For a fixed $\psi\in C(\R;C^{d+1}_c(\R^{d}))$, we approximate it in $L_{loc}^\infty(\R;C_c^{d+1}(\R^d))$ trough the set of disjoint intervals $(K_j^n)_{j=1,\dots,n}$ with the length of order $\frac{1}{n}$, partitioning the support of $\rho$, by the function 
	$$
	\psi_n(\mxi,\lambda)=\sum\limits_{j=1}^n \psi(\mxi,\lambda_j) \chi^n_j(\lambda) 
	$$ 
	where $\chi^n_j$ is the characteristic function of the interval $K^n_j$.
	
	Since ${\cal A}_{\psi_n(\cdot,\lambda)}$ is for any $\lambda\in\R$
	an $L^1$-Fourier multiplier operator (see Theorem \ref{L1-multiplier}
	and Remark \ref{rem:Cd+1}), it holds
	\begin{align*}
	\Bigl\|\int_{\R}{\cal A}_{\psi_n(\mxi,\lambda)} & \rho(\lambda)\,d\beta \Bigr\|_{L^1(\R^d)}=
		\Bigl\|\sum\limits_{j=1}^n {\cal A}_{\psi(\mxi,\lambda_j)} \int_{\R} \chi^n_j(\lambda) \rho(\lambda)\,d\beta  \Bigr\|_{L^1(\R^d)}\\
	& \leq \sum\limits_{j=1}^n \bigl\| {\cal F}^{-1} \left(\psi(\mxi,\lambda_j)\right) \bigr\|_{L^1(\R^d)} \, 
	\Bigl\|\int_{\R} \chi^n_j(\lambda) \rho(\lambda)\,d\beta  \Bigr\|_{L^1(\R^d)}	\\
%	&=\sum\limits_{j=1}^n \bigl\| {\cal F}^{-1} \left(\psi(\mxi,\lambda_j)\right) \bigr\|_{L^1(\R^d)} 
%	\int_{\R^d}\int_{\R} \chi^n_j(\lambda) |\rho(\lambda)|\,d|\beta_k| \, d\mx\\
	&\leq \int_{\R^d}\int_{\R} \sum\limits_{j=1}^n \bigl\| {\cal F}^{-1} \left(\psi(\mxi,\lambda_j)\right) \bigr\|_{L^1(\R^d)}\chi^n_j(\lambda) |\rho(\lambda)|\,d|\beta| \, d\mx\\
	& \leq  \sup\limits_{\lambda \in \R} \Bigg|\sum\limits_{j=1}^n  \chi^n_j(\lambda) \bigl\| {\cal F}^{-1} \left(\psi(\mxi,\lambda_j)\right) \bigr\|_{L^1(\R^d)} \Biggr|\, \int_{\R^d}\int_{\R}  |\rho(\lambda)|\,d|\beta| \, d\mx
	\\&\lesssim \sup\limits_{\lambda \in \R} \Bigg|\sum\limits_{j=1}^n  \chi^n_j(\lambda) \bigl\|\psi(\mxi,\lambda_j)\bigr\|_{C^{d+1}(\R^d)} \Biggr|\, \int_{\R^d}\int_{\R}  |\rho(\lambda)|\,d|\beta| \, d\mx
	\end{align*} Letting $n\to \infty$ here, we reach to
	
	\begin{equation*}
	\begin{split}
	&\Bigl\|\int_{\R}{\cal A}_{\psi(\mxi,\lambda)} \rho(\lambda)\,d\beta \Bigr\|_{L^1(\R^d)} \lesssim \|\psi(\mxi,\lambda) \|_{C(\R_\lambda;C_c^{d+1}(\R^d))} \, \Bigl\| \int_{\R} |\rho(\lambda)|\,d|\beta|  \Bigr\|_{L^1(\R^d)} \,.
	\end{split}
	\end{equation*} This concludes the proof. \end{proof}

\begin{remark}
	\label{Lp-bound} We note that the same bound holds if we replace $d\beta$ by an arbitrary $L^1(\R^{d+1})$-bounded function with compact support.
\end{remark}

We shall also need the fact that an action of the multiplier operator can be defined on the space of Radon measures. The following lemma holds.
\begin{lemma}
	\label{l-meas}
	Assume that $\psi:\R^d\to \C$ is such that ${\cal F}^{-1}(\bar\psi)\in L^1(\R^d)$. Let $\mu\in {\cal M}(\R^d)$ be a bounded Radon measure. Then for the action of the multiplier operator ${\cal A}_\psi$ on the measure $\mu$ defined for all $\varphi \in C_0(\R^d)$ by
	$$
	\langle {\cal A}_{\psi}(\mu)\, , \, \bar\varphi \rangle := \langle \mu\, , \, \overline{{\cal A}_{\bar\psi}(\varphi)} \rangle=\int_{\R^d} \overline{{\cal A}_{\bar\psi}(\varphi)}\,d\mu   \,,
	$$ the following bound holds
	\begin{equation*}
	%\label{meas-mult-bnd}
	\|{\cal A}_{\psi}(\mu)\|_{{\cal M}(\R^d)} \lesssim \|{\cal F}^{-1}(\bar\psi)\|_{L^1(\R^d)} \,,
	\end{equation*}
	where the constant in the inequality depends on $d$ and $\mu$, 
	i.e.~it is independent of $\psi$. 
\end{lemma}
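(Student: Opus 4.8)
The plan is to reduce the measure estimate to the known $L^1$–multiplier bound (Theorem \ref{L1-multiplier} i) together with Remark \ref{rem:L1Linfty}) by a duality argument. First I would observe that the action is well-defined: since ${\cal F}^{-1}(\bar\psi)\in L^1(\R^d)$, the operator ${\cal A}_{\bar\psi}$ maps $C_0(\R^d)$ into $C_0(\R^d)$ (it is convolution with an $L^1$ kernel, which sends $C_0$ into $C_0$), so $\overline{{\cal A}_{\bar\psi}(\varphi)}\in C_0(\R^d)$ and the pairing $\langle\mu,\overline{{\cal A}_{\bar\psi}(\varphi)}\rangle$ makes sense. Hence ${\cal A}_\psi(\mu)$ is a well-defined linear functional on $C_0(\R^d)$.

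Next I would estimate its norm. For $\varphi\in C_0(\R^d)$ with $\|\varphi\|_{L^\infty(\R^d)}\le 1$ we have
\begin{equation*}
|\langle {\cal A}_\psi(\mu),\bar\varphi\rangle|=\Big|\int_{\R^d}\overline{{\cal A}_{\bar\psi}(\varphi)}\,d\mu\Big|
\le \|{\cal A}_{\bar\psi}(\varphi)\|_{L^\infty(\R^d)}\,\|\mu\|_{{\cal M}(\R^d)}
\le \|{\cal A}_{\bar\psi}\|_{L^\infty\to L^\infty}\,\|\mu\|_{{\cal M}(\R^d)}.
\end{equation*}
By Remark \ref{rem:L1Linfty} applied to the symbol $\bar\psi$ (whose inverse Fourier transform ${\cal F}^{-1}(\bar\psi)$ is assumed to lie in $L^1(\R^d)$), the operator norm $\|{\cal A}_{\bar\psi}\|_{L^\infty\to L^\infty}$ equals $\|{\cal F}^{-1}(\bar\psi)\|_{L^1(\R^d)}$. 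Taking the supremum over all such $\varphi$, and using that ${\cal M}(\R^d)=(C_0(\R^d))'$ so the norm of the functional ${\cal A}_\psi(\mu)$ in ${\cal M}(\R^d)$ is exactly this supremum, yields $\|{\cal A}_\psi(\mu)\|_{{\cal M}(\R^d)}\le \|{\cal F}^{-1}(\bar\psi)\|_{L^1(\R^d)}\,\|\mu\|_{{\cal M}(\R^d)}$, which is the claimed bound with the constant depending only on $\mu$ (through $\|\mu\|_{{\cal M}(\R^d)}$) and $d$.

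The only genuinely delicate point — the ``main obstacle'' such as it is — is the bookkeeping around which symbol carries the conjugation and the antilinearity convention fixed earlier for the pairing $\langle\cdot\mid\cdot\rangle$ on $\C^d$: one must check that ${\cal A}_{\bar\psi}$ is the correct adjoint of ${\cal A}_\psi$ with respect to the pairing used in the definition, so that the hypothesis ${\cal F}^{-1}(\bar\psi)\in L^1$ (rather than ${\cal F}^{-1}(\psi)\in L^1$) is the right one to invoke. Once the adjoint is correctly identified, everything else is the routine duality chain above together with the elementary fact that convolution with an $L^1$ function maps $C_0$ to $C_0$ with operator norm bounded by the $L^1$-norm of the kernel. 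I would also remark, as the statement does, that since ${\cal F}^{-1}(\bar\psi)\in L^1$ forces $\bar\psi$ (hence $\psi$) to be continuous and bounded, there is no issue with the multiplier being merely a distribution.
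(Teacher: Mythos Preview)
Your proof is correct and follows essentially the same approach as the paper: both use the duality ${\cal M}(\R^d)=(C_0(\R^d))'$ together with the $L^\infty$-multiplier bound from Remark \ref{rem:L1Linfty} applied to ${\cal A}_{\bar\psi}$. Your version is in fact slightly more careful, since you explicitly verify that ${\cal A}_{\bar\psi}$ maps $C_0(\R^d)$ into itself so that the pairing is well-defined, and you flag the conjugation bookkeeping that the paper glosses over.
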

\begin{proof}
	First note that the function $\psi$ is a symbol of an $L^1$-multiplier operator (see Theorem \ref{L1-multiplier}). Thus, it is also a symbol of an $L^\infty$ multiplier operator with the same norm (see Remark \ref{rem:L1Linfty}). Therefore, we have 
	\begin{equation*}
	%\label{mu-est-1}
	\begin{split}
	\bigl|\langle {\cal A}_{\psi}(\mu)\,, \, \varphi \rangle\bigr| = \Bigl|\int_{\R^d} \overline{{\cal A}_{\bar\psi}(\varphi)}\,d\mu \Bigr|  
	&\leq  \|{\cal A}_{\bar\psi}(\varphi)\|_{L^\infty(\R^d)} \, \|\mu\|_{{\cal M}(\R^d)} \\&\leq \|\mu\|_{{\cal M}(\R^d)} \|{\cal F}^{-1}(\bar\psi)\|_{L^1(\R^d)} \|\varphi\|_{L^\infty(\R^d)} \,,
	\end{split}
	\end{equation*} i.e. 
	$$
	\|{\cal A}_{\psi}(\mu) \|_{{\cal M}(\R^d)} \leq \|\mu\|_{{\cal M}(\R^d)} \|{\cal F}^{-1}(\psi)\|_{L^1(\R^d)}.
	$$ This concludes the proof.
\end{proof}

\begin{remark}\label{l-meas-rem}
Analogously as in the proof of Lemma \ref{L-ocjena-L1}, 
we can generalise the statement of the previous lemma 
to the situation where $\mu\in L^1(\R_\lambda;\mathcal{M}(\Rd))$
and $\psi\in C_c(\R_\lambda; C^{d+1}_c(\Rd))$, obtaining
\begin{equation*}
\biggl\|\mathcal{F}^{-1}\Bigl(\int_\R \psi(\cdot,\lambda)\, d\hat\mu (\cdot,\lambda)d\lambda\Bigr)\biggr\|_{\mathcal{M}(\Rd)}
	= \biggl\|\int_\R\mathcal{A}_{\psi(\cdot,\lambda)}(\mu(\cdot,\lambda))
	\,d\lambda\biggr\|_{\mathcal{M}(\Rd)}
	\lesssim_\mu \|\psi\|_{C(\R;C^{d+1}(\Rd))} \,.
\end{equation*}
\end{remark}

%========================================================================
%===============================================================================
\section{A velocity averaging result}\label{sec:velocity_averaging}
% =========================================================================

In this section, we shall prove a velocity averaging result for sequences of degenerate parabolic transport equations (diffusive transport equations). We shell not distinguish between the time and space variables which makes the result more general.
\begin{equation}
\label{TEq}
\begin{split}
\sum\limits_{k=1}^d \pa_{x_k}\Bigl( \bigl(f_k(\mx,\lambda)&+ \sum\limits_{j=1}^d a_{kj,x_j}(\mx,\lambda)\bigr) h_n \Bigr) = \,\sum\limits_{k,j=1}^d \pa^2_{x_k x_j}\left( a_{kj}(\mx,\lambda) h_n(\mx,\lambda) \right)\bigr) \\
&+ \pa_{\lambda} G_n(\mx,\lambda) + \Div_\mx P_n(\mx,\lambda) \quad \hbox{in} \ {\cal D}'(\R^{d+1})\;.
\end{split} 
\end{equation} By putting
$$
\bigl(f_1(\mx,\lambda),\dots,f_d(\mx,\lambda)\bigr)=f(\mx,\lambda):=\pa_\lambda \mff(\mx,\lambda) 
$$ 
we assume that conditions a)\,--\,c) are fulfilled (while keeping in mind that we incorporated the time variable in the space variables). 
Here we used a shorthand $a_{kj,x_j}=\partial_{x_j}a_{kj}$, 
which will be repeatedly used throughout this and the following section.
Compared to \eqref{diffusive}, here we have written the first order term on the left-hand side in a more precise form taking into account the contribution coming from the heterogeneity of the diffusion matrix $a$.
By formal calculation from two terms with the diffusion matrix, we obtain the expression $\Div_\mx(a(\mx,\lambda)\nabla_\mx h_n(\mx,\lambda))$ on the right side, which indeed corresponds to a nonhomogeneous diffusion. 
This formal approach is justified by the assumption f) below.

Regarding the terms $(G_n)$ and $(P_n)$, we assume

\begin{itemize}

\item[d)] $G_n \to G$ strongly in ${L}_{loc}^{r_0}(\R_\lambda; {W}_{loc}^{-1,r_0}(\R^{d}))$ for some 
	$r_0\in(1,\infty)$;

\item [e)] $P_n=(P_1^n,\dots,P_d^n)\to P$ strongly in ${L}_{loc}^{p_0}(\R^{d}\times\R_\lambda;\Rd)$ for some $p_0\in(1,\infty)$.

\end{itemize} Finally, we need to additionally assume the following property of the solutions $(h_n)$ which essentially entails expected regularisation effects due to diffusion:
\begin{itemize}
\item[f)] there exist sequences $(\beta_j^n)_{n\in \N}$, $j=1,2,\dots,d$, of functions bounded by $c_\beta>0$ in $L^2(\Omega;{\cal M}(K))$ for every $K\subset\subset \R$ and $\Omega\subset\subset \R^{d+1}_+$ in the sense of \eqref{beta-bnd} such that
\begin{equation}
\label{regularity}
\begin{split}
\sum\limits_{k,j=1}^d & \pa^2_{x_k x_j}\big( a_{kj}(\mx,\lambda) h_n(\mx,\lambda) \big)-\sum\limits_{k,j=1}^d \pa_{x_k} \bigl( a_{kj,x_j}(\mx,\lambda)h_n(\mx,\lambda)\bigr)\\
&= \sum\limits_{k,j=1}^d  \pa_{x_j} \Bigl(\sigma_{kj}(\mx,\lambda) \beta^n_k(\mx,\lambda)\Bigr) \,,
\end{split}
\end{equation} where $\sigma$ is the matrix valued function defined in b).

\end{itemize} %Due to linearity of the equation, we can assume that $(h_n)$ is uniformly compactly supported and that it tends weakly to zero which we actually do in the following theorem.

\begin{remark}
A motivation for the assumption f) comes from the entropy formulation given in
\cite{CK} (see Section \ref{sec:application} below and in particular Lemma \ref{condf}).
\end{remark}

We are now in position to state and prove the main result of this section,
the velocity averaging result.

\begin{theorem}
\label{VA-thm} 
Let $d\geq 2$ and assume that the sequence $(h_n)$ is uniformly compactly supported and that it converges weakly to $h_0$ in $L^q(\R^{d+1})$, for some $q> 2$.

Under assumptions {\rm a)--f)}, the sequence $(h_n)$ of solutions to \eqref{TEq} admits a (non-relabelled) subsequence such that for every $\rho \in C^1_c(\R)$ the sequence $(\int_\R \rho(\lambda) h_n(\mx,\lambda) d\lambda)$ strongly converges toward $(\int_\R \rho(\lambda) h_0(\mx,\lambda) d\lambda)$ in $L^1_{loc}(\R^{d})$.
\end{theorem}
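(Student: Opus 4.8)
The plan is to use a Fourier/Littlewood–Paley decomposition of the averaging operator combined with the two H-measure tools developed above (Theorem~\ref{bilinearboundedness} and Theorem~\ref{H-for-diffusion}), applied to the difference sequence $u_n := h_n - h_0$, which converges weakly to zero in $L^q$. By linearity, equation \eqref{TEq} holds for $u_n$ with modified (but still convergent in the same topologies, hence yielding, after subtracting limits, data converging strongly to zero) right-hand sides $\tilde G_n$ and $\tilde P_n$, and with an analogous representation \eqref{regularity} for the diffusion part with new measure sequences $\tilde\beta^n_j$ still bounded in $L^2(\Omega;\mathcal M(K))$. It suffices to prove that $\int_\R \rho(\lambda) u_n(\mx,\lambda)\,d\lambda \to 0$ in $L^1_{\mathrm{loc}}(\R^d)$ for every fixed $\rho\in C^1_c(\R)$.

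First I would localise: multiply by a cutoff $\varphi\in C_c^\infty(\R^d)$, so that $v_n(\mx):=\varphi(\mx)\int_\R \rho(\lambda)u_n(\mx,\lambda)\,d\lambda$ is uniformly compactly supported and, being bounded in $L^q$ with $q>2$, converges weakly to $0$; it then suffices to show $v_n\to 0$ strongly in $L^1$, equivalently (by uniform compact support and $L^q$-boundedness) in $L^2$. The core identity I would establish is a representation of $\|v_n\|_{L^2}^2$ (or of $\langle v_n, w_n\rangle$ for a suitable auxiliary sequence) as a bilinear form that, in the limit, is controlled by the H-measures. Concretely, I split the symbol $1$ on the Fourier side into a neighbourhood of the origin plus its complement; the low-frequency part gives a compact (hence strongly convergent to zero) contribution because $v_n$ is compactly supported and $\widehat{v_n}(0)\to 0$. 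For the high-frequency part one writes $v_n = \mathcal A_{\psi(\mxi/|\mxi|)}(v_n) + (\text{remainder})$, where the localisation in $\mxi/|\mxi|$ is chosen to exploit the non-degeneracy condition c): on the unit sphere $S^d$ in the $(\xi_0,\mxi)$-variables (recall time is absorbed into $\mx$, so here $d$ plays the role of $d+1$ of the equation) the set where $\langle f(\mx,\lambda)\,|\,\mxi\rangle$ and $\langle a(\mx,\lambda)\mxi\,|\,\mxi\rangle$ simultaneously vanish has zero $\lambda$-measure.

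The central computation is to use equation \eqref{TEq} to replace $h_n$ (equivalently $u_n$) inside the averaged quantity by an expression involving: (i) the first-order transport symbol $2\pi i\langle f(\mx,\lambda)\,|\,\mxi\rangle$, (ii) the second-order diffusion symbol, which by assumption f) is rewritten as a \emph{first-order} term $\sum_{k,j}\partial_{x_j}(\sigma_{kj}\tilde\beta^n_k)$ with symbol $2\pi i\sum_{k,j}\sigma_{kj}(\mx,\lambda)\xi_j$ acting on the measure sequence $\tilde\beta^n_k$, and (iii) the strongly vanishing data $\tilde G_n$, $\tilde P_n$. Pairing with a test symbol $\psi\in C^d(S^{d-1})$ composed with $\mxi/|\mxi|$, the $u_n$-contributions localise against Theorem~\ref{bilinearboundedness} producing an H-measure $\mu_1$ of the form $g\,d\nu$ concentrated (via the transport equation) on the set $\xi_0+\langle f\,|\,\mxi\rangle=0$, while the $\tilde\beta^n$-contributions are handled by Theorem~\ref{H-for-diffusion} producing a measure $\mu_2$ concentrated on $\langle a(\mx,\lambda)\mxi\,|\,\mxi\rangle=0$ (since $\sum_j\sigma_{kj}\xi_j=0$ for all $k$ is equivalent to $\langle a\mxi\,|\,\mxi\rangle=0$ by \eqref{non-negativity}); the commutation Lemma~\ref{izclankaokomutatoru} and Lemma~\ref{l-meas}/Remark~\ref{l-meas-rem} are used repeatedly to move multiplier operators across $C^{0,1}$-coefficients and measure-valued factors up to strongly vanishing errors. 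Combining the two localisations, the only surviving piece of $\lim_n \|v_n\|_{L^2}^2$ is supported, for each $(\mx,\xi_0,\mxi)$, on $\{\lambda: \xi_0+\langle f(\mx,\lambda)\,|\,\mxi\rangle=\langle a(\mx,\lambda)\mxi\,|\,\mxi\rangle=0\}$, which by c) is $\lambda$-null; since the test function $\rho$ integrates in $\lambda$ against an $L^1$-density of the H-measure, this forces $\lim_n\|v_n\|_{L^2}=0$.

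The main obstacle I anticipate is the bookkeeping that reconciles the \emph{two different H-measure objects} living on different spaces — $\mu_1\in L^2_{w^*}(K;\mathcal M(\Omega\times S^{d-1}))$ from Theorem~\ref{bilinearboundedness} with its $g\,d\nu$ disintegration, versus $\mu_2\in\mathcal M(\R\times\Omega\times S^{d-1})$ from Theorem~\ref{H-for-diffusion} — and showing they can be added/compared on a common footing so that the combined support lies in the non-degeneracy set; this requires choosing the symbol classes, the dyadic cutoffs, and the order in which limits ($n\to\infty$, then refinement of the frequency partition, then $\varepsilon\to 0$ in the sphere neighbourhoods) are taken so that all commutator and low-frequency errors genuinely vanish. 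A secondary technical point is justifying the ``formal'' manipulation turning the two diffusion terms into $\mathrm{div}_\mx(a\nabla_\mx h_n)$ and then into \eqref{regularity}: this is exactly what assumption f) is postulated to provide, so it must be invoked verbatim rather than re-derived. Once the support localisation is in place, the conclusion that averages converge strongly is a routine consequence of the absolute continuity in $\lambda$ encoded in the H-measure densities together with the non-degeneracy condition c), as in the classical velocity averaging arguments.
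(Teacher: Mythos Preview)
Your proposal has the right ingredients but misidentifies the mechanism, and as written it contains a genuine gap. The key point you miss is that the proof requires \emph{two} separate testings of the equation, at \emph{different} orders. First, one tests the original second-order form \eqref{TEq} against a multiplier with symbol $\psi(\mxi/|\mxi|)/|\mxi|^2$; the Riesz potential $|\mxi|^{-2}$ makes all first-order terms (flux, $P_n$) and the $G_n$ term vanish in the limit by compactness, and the only surviving contribution is $\langle a(\mx,\lambda)\mxi\,|\,\mxi\rangle\,\mu$, forcing $\operatorname{supp}\mu\subset A_p:=\{\langle a\mxi\,|\,\mxi\rangle=0\}$. Only \emph{after} this does one invoke assumption f) to pass to the first-order reformulation \eqref{TEq-1order} and test with $\psi(\mxi/|\mxi|)/|\mxi|$; this yields the identity $\langle f\,|\,\mxi\rangle\,\mu=\sum_k(\sigma\mxi)_k\,\mu_k$, where the $\mu_k$ are the H-measures of $(\beta^n_k)$ against $(v_n)$ from Theorem~\ref{H-for-diffusion}. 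The point is not that the $\mu_k$ are supported on $A_p$ (there is no reason for that, and the paper never claims it): rather, the \emph{coefficient} $(\sigma\mxi)_k$ vanishes on $A_p$, so restricting the identity to $A_p$ gives $\langle f\,|\,\mxi\rangle\,\mu=0$ there, hence $\operatorname{supp}\mu\subset A_p\cap A_f$. Your description skips the second-order step entirely and tries to extract both localisations from a single first-order testing; from the identity $\langle f\,|\,\mxi\rangle\,\mu=\sum_k(\sigma\mxi)_k\,\mu_k$ alone you cannot conclude anything about $\mu$ on $A_p^c$, since the right-hand side need not vanish there.

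A secondary issue: your plan to pair $u_n$ against $v_n=\varphi\int\rho\,u_n\,d\lambda$ to control $\|v_n\|_{L^2}^2$ does not meet the hypotheses of Theorems~\ref{bilinearboundedness} and~\ref{H-for-diffusion}, which require the second sequence to be bounded in $L^\infty$. The paper circumvents this by taking the auxiliary sequence to be (a weak-$\star$ centred version of) $\operatorname{sgn}\bigl(\int\rho(h_n-h_0)\,d\lambda\bigr)$, which is bounded in $L^\infty$ and delivers $L^1_{loc}$ convergence directly; no Littlewood--Paley decomposition is needed for this argument (that machinery enters only in the regularity result of Section~5).
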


\begin{proof}
Let $\Omega\times K \subseteq \R^{d}\times \R$ be a bounded open subset which contains supports of all the functions $h_n$.
Let us take a bounded sequence of functions $(v_n)$ uniformly compactly supported on $\Omega$ and
weakly-$\star$ converging to zero in ${L}^\infty(\Omega)$, which we take at this moment to be arbitrary. At the end 
of the proof the precise choice will be made. 
Let us pass to a subsequence of both $(h_n)$ and $(v_n)$ (not relabelled) which define the H-measure
$\mu\in L^2(K;{\cal M}(\Omega\times \Sdmj))$
in the sense of Theorem \ref{bilinearboundedness}. Depending on the regularity of $(h_n)$, the functional might be of better regularity (i.e.~in $L^q(K;{\cal M}(\Omega\times \Sdmj))$ for some $r>2$), but it is not of essential importance for the rest of the proof.
The idea of the proof is to show that $\mu=0$, which (for the right choice of $(v_n)$)
implies the desired strong convergence. 

%For a clearer presentation, in this proof we introduce the notation $\meta:=(\xi_0,\mxi)$,
%i.e.~we have $\meta=(\xi_0,\xi_1,\dots,\xi_d)$.

For arbitrary $\varphi \in {\rm C}_c(\Omega)$, $\rho\in C^1_c(K)$ and 
$\psi\in{\rm C}^{d}_c(\Sdmj)$ we set
\begin{equation*}%\label{tf-1}
\theta_n(\mx,\lambda) := \varphi(\mx)\rho(\lambda)\overline{{\cal A}_{\frac{\psi\left( \mxi/|\mxi|\right)}{|\mxi|^2}}( v_n)(\mx)} \;.
\end{equation*}
Testing \eqref{TEq} by $-\theta_n$, i.e.~multiplying it by $-\theta_n$, integrating over $\Omega\times K$, and applying the integration by parts, we get the following expression on the right-hand side:
\begin{align*}
\int\limits_{\Omega\times K} & \varphi(\mx) \rho(\lambda)\sum\limits_{k,j=1}^d  a_{kj}(\mx,\lambda) h_n(\mx,\lambda) \,\overline{{\cal A}_{\frac{4 \pi^2 \xi_j \xi_k}{|\mxi|^2}\psi(\frac{\mxi}{|\mxi|})}
	( v_n)(\mx)} \,d\mx d\lambda
\\&- 2\int\limits_{\Omega\times K}  \rho(\lambda)\sum\limits_{k,j=1}^d  a_{kj}(t,\mx,\lambda) h_n(t,\mx,\lambda) \, \pa_{x_j}\varphi(t,\mx) \overline{{\cal A}_{\frac{2 \pi i  \xi_k}{|\mxi|^2}\psi(\frac{\mxi}{|\mxi|})}
	( v_n)(t,\mx)} \,  d\mx d\lambda
\\&- \int\limits_{\Omega\times K}  \rho(\lambda)\sum\limits_{k,j=1}^d  a_{kj}(t,\mx,\lambda) h_n(t,\mx,\lambda) \, \pa_{x_j x_k}\varphi(t,\mx) \overline{{\cal A}_{\frac{1}{|\mxi|^2}\psi(\frac{\mxi}{|\mxi|})}
	( v_n)(t,\mx)} \, d\mx d\lambda \\
& +\int\limits_{\Omega\times K} G_n(t,\mx,\lambda)\varphi(t,\mx) \rho'(\lambda)\overline{
	{\cal A}_{\frac{\psi\left(\mxi/|\mxi|\right)}{|\mxi|^2}}(v_n)(t,\mx)} \, d\mx d\lambda \\
& +\sum\limits_{j=1}^d \,\int\limits_{\Omega\times K}  P^n_j(\mx,\lambda) \varphi(t,\mx) \rho(\lambda) \overline{{\cal A}_{\frac{2 \pi i\xi_j}{|\mxi|^2}\psi(\frac{\mxi}{|\mxi|})}(\varphi v_n)(\mx)} \,d\mx d\lambda\\
& +\sum\limits_{j=1}^d \,\int\limits_{\Omega\times K}  P^n_j(\mx,\lambda) \pa_{x_j}\varphi(t,\mx) \rho(\lambda) \overline{{\cal A}_{\frac{1}{|\mxi|^2}\psi(\frac{\mxi}{|\mxi|})}(\varphi v_n)(\mx)} \,d\mx d\lambda \;,
\end{align*}
where we have used $\partial_{x_j}{\cal A}_\psi={\cal A}_{2\pi i\xi_j\psi}$, which holds according to the definition of the Fourier transform. Further on, the left-hand side reads:
\begin{align*}
& \int\limits_{\Omega\times K}\sum\limits_{k=1}^d \Bigl(f_k(\mx,\lambda)+\sum\limits_{j=1}^d a_{kj,x_j}(\mx,\lambda) \Bigr)\, (\rho \, \varphi \, h_n)(\mx,\lambda)
\,\overline{{\cal A}_{\frac{2 \pi i\xi_j}{|\mxi|^2}\psi(\frac{\mxi}{|\mxi|})}
	(v_n)(\mx)}\,d\mx d\lambda \\
&\qquad + \,\int\limits_{\Omega\times K}
\rho(\lambda)\overline{{\cal A}_{\frac{1}{|\mxi|^2}\psi(\frac{\mxi}{|\mxi|})}
	( v_n)(\mx)} \, h_n(\mx,\lambda)  
\\& \qquad\qquad\qquad\qquad\qquad\qquad \times
\biggr( \sum\limits_{k=1}^d \Bigl(f_k(\mx,\lambda)+\sum\limits_{j=1}^d a_{kj,x_j}(\mx,\lambda) \Bigr)  \pa_{x_j} \varphi
\biggr) \, d\mx d\lambda \,.
\end{align*} 
Since for any $s\in (0,d)$ and $r\in (1,\frac{d}{s})$ the Riesz transform ${\cal A}_{\frac{1}{|\mxi|^s}}$ maps $L^r(\R^{d})$ continuously on $W^{s,r}_{loc}(\R^{d})$ (cf.~\cite[Chapter V]{stein}), i.e.~it is a compact mapping from $L^r(\R^{d})$ into $L_{loc}^r(\R^{d})$, and $(v_n)$ converges weakly to zero, we see that all the terms but the first one on the right-hand side tend to zero as $n\to \infty$. As for the remaining term, we have according to Theorem \ref{bilinearboundedness}:
\begin{equation*}
%\label{1st}
\begin{aligned}
\lim\limits_{n\to \infty}\int\limits_{\Omega\times K} & \varphi(\mx) \rho(\lambda)\sum\limits_{k,j=1}^d  a_{kj}(\mx,\lambda) h_n(\mx,\lambda) \,\overline{{\cal A}_{\frac{4 \pi^2 \xi_j \xi_k}{|\mxi|^2}\psi(\frac{\mxi}{|\mxi|})}
	(v_n)(\mx)} \, d\mx d\lambda\\
&= 4\pi^2\int_{\Omega\times K \times \Sdmj} \varphi(\mx) \rho(\lambda) \psi(\mxi) \langle a(\mx,\lambda)\mxi\,|\,\mxi \rangle\, d\mu(\mx,\lambda,\mxi)=0 \,.
\end{aligned}
\end{equation*} 
In other words, we conclude that 
$$
\operatorname{supp}\mu \subset A_p:=\Bigl\{(\mx,\lambda,\mxi) \in \R^{d}\times \R\times \Sdj: \,  \langle a(\mx,\lambda)\mxi\,|\,\mxi \rangle=0\Bigr\}\,,
$$ i.e.~if we denote by $\chi_{A_p^C}$ the characteristic function of the complement of the set $A_p$ we have
\begin{equation}
\label{parabolic-support}
\int_{\Omega\times K \times \Sdmj} \phi(\mx,\lambda,\mxi)  \chi_{A_p^C}(\mx,\lambda,\mxi) \,d\mu(\mx,\lambda,\mxi)=0 \,,
\end{equation}
for any $\phi \in C_0(\Omega\times K \times \Sdj)$.

To proceed, we use the property f) given above. To this end, we note that we can rewrite \eqref{TEq} in the form
\begin{equation}
\label{TEq-1order}
\begin{split}
\sum\limits_{k=1}^d \pa_{x_k}\bigl( f_k(\mx,\lambda) h_n \bigr) = \,\sum\limits_{k,j=1}^d & \pa_{x_j} \Bigl(\sigma_{kj}(\mx,\lambda) \beta^n_k(\mx,\lambda)\Bigr) \\
&+ \pa_{\lambda} G_n(\mx,\lambda) + \Div_\mx P_n(\mx,\lambda) \quad \hbox{in} \ {\cal D}'(\R^{d+1})\;.
\end{split} 
\end{equation} Similarly as in the first part of the proof and with the same notations, we take the test function
\begin{equation*}%\label{tf-2}
\tilde{\theta}_n(\mx,\lambda) := \varphi(\mx)\rho(\lambda)\overline{{\cal A}_{\frac{\psi\left( \mxi/|\mxi|\right)}{|\mxi|}}(v_n)(\mx)} \;
\end{equation*} 
and insert it into \eqref{TEq-1order} (multiplied by $-1$). On the right-hand side we get
\begin{align*}
\int\limits_{\Omega} \varphi(\mx) & \int_K  \rho(\lambda) \sum\limits_{k,j=1}^d \overline{{\cal A}_{\frac{2 \pi i\xi_j}{|\mxi|}\psi(\frac{\mxi}{|\mxi|})}
	(v_n)(\mx)} \,  \sigma_{kj}(\mx,\lambda)    d\beta^n_k \, d\mx\\
&+ \int\limits_{\Omega}   \overline{{\cal A}_{\frac{1}{|\mxi|}\psi(\frac{\mxi}{|\mxi|})}(v_n)(\mx) }\, 
	 \int_K  \rho(\lambda) \sum\limits_{k,j=1}^d \pa_{x_j}\varphi(\mx)  \,  \sigma_{kj}(\mx,\lambda)    d\beta^n_k \,  d\mx\\
& +\int\limits_{\Omega\times K} G_n(\mx,\lambda)\varphi(\mx) \rho'(\lambda)\overline{{\cal A}_{\frac{\psi\left(\mxi/|\mxi|\right)}{|\mxi|}}(v_n)(t,\mx)} \,d\mx d\lambda \\
& +\sum\limits_{j=1}^d \,\int\limits_{\Omega\times K}  P^n_j(\mx,\lambda) \varphi(t,\mx) \rho(\lambda) \overline{{\cal A}_{\frac{2 \pi i\xi_j}{|\mxi|}\psi\left(\mxi/|\mxi|\right)}(\varphi v_n)(\mx)} \,d\mx d\lambda\\
& +\sum\limits_{j=1}^d \,\int\limits_{\Omega\times K}  P^n_j(\mx,\lambda) \pa_{x_j}\varphi(\mx) \rho(\lambda) \overline{{\cal A}_{\frac{1}{|\mxi|}\psi\left(\mxi/|\mxi|\right)}(\varphi v_n)(\mx)} \,d\mx d\lambda \;,
\end{align*}
while the left-hand side reads:
\begin{align*}
\int\limits_{\Omega\times K}\sum\limits_{k=1}^d & (\rho \,\varphi\, f_k)(\mx,\lambda)  
	\overline{{\cal A}_{\frac{2 \pi i\xi_k}{|\mxi|}\psi(\frac{\mxi}{|\mxi|})}
	(v_n)(\mx)}
 \, d\mx d\lambda	 \\
&  + \,\int\limits_{\Omega\times K}
 \rho(\lambda)\overline{{\cal A}_{\frac{1}{|\mxi|}\psi(\frac{\mxi}{|\mxi|})}
	( v_n)(\mx)} \, h_n(\mx,\lambda) 
\Big( 	\sum\limits_{k=1}^d f_k(\mx,\lambda)  \pa_{x_k} \varphi
\Big) \, d\mx d\lambda 	\,. \\
\end{align*} 
Let us denote by $\mu_j$ the H-measure corresponding to the (sub)sequences $(\beta^n_j)$ and $(v_n)$ in the sense of Theorem \ref{H-for-diffusion}.
With the same argument as in the first part of the proof we see that the only 
non-zero terms on the limit $n\to\infty$ are those corresponding to the first 
term on the right-hand side and the first term on the left-hand side. 
Thus, by letting $n\to \infty$ along a common subsequence (not relabelled) defining all H-measures 
$\mu_j$, $j=1,2,\dots, d$, and $\mu$ used above, we get (the expression is divided by $2\pi i$)
\begin{equation}
\label{2nd}
\begin{split}
\int\limits_{\Omega \times K \times \Sdj} \varphi(\mx) & \rho(\lambda) \psi(\mxi) \sum\limits_{k,j=1}^d  {\xi_j} \,  \sigma_{kj}(\mx,\lambda)    d\mu_k\\
&=\int\limits_{\Omega \times K \times \Sdj}\varphi(\mx) \rho(\lambda) \psi(\mxi) \sum\limits_{k=1}^d f_k(\mx,\lambda)\xi_k d\mu \,.
\end{split}
\end{equation} 

According to \eqref{non-negativity}, we have for any $k\in\{1,2,\dots,d\}$
$$
\sum\limits_{j=1}^d {\xi_j} \,  \sigma_{kj}(\mx,\lambda)=0 \;, \quad
	(\mx,\lambda,\mxi)\in A_p \,.
$$ 
Thus, from \eqref{2nd} for every $\phi\in C_0(\Omega\times K \times \Sdj)$ it holds:
\begin{equation*}
\begin{split}
0 &=\int\limits_{A_p} \phi(\mx,\lambda,\mxi) \sum\limits_{k,j=1}^d  {\xi_j} \,  \sigma_{kj}(\mx,\lambda)  \,  d\mu_k=\int\limits_{A_p}\phi(\mx,\lambda,\mxi) \langle f(\mx,\lambda)\,|\,\mxi\rangle \, d\mu \;.
\end{split}
\end{equation*}
Denoting
$$
A_f:=\Bigl\{(\mx,\lambda,\mxi) \in \Omega\times K \times \Sdj : \,
	\langle f(\mx,\lambda)\,|\,\mxi\rangle=0 \Bigr\}
$$ 
and by $A_f^C$ its complement, we get from the former relation
\begin{equation}
\label{2nd-1}
\int\limits_{A_p\cap A^C_f}\phi(\mx,\lambda,\mxi)\, d\mu=0 \,.
\end{equation}

Combining \eqref{parabolic-support} and \eqref{2nd-1}, we obtain
\begin{equation*}
%\label{sum}
\begin{split}
0 &=\int\limits_{\Omega\times K \times \Sdj}\phi(\mx,\lambda,\mxi) \Bigl(\chi_{A_p\cap A^C_f}(\mx,\lambda,\mxi)+\chi_{A^C_p}(\mx,\lambda,\mxi)\Bigr) \,d\mu\\
&=\int\limits_{\Omega\times K \times \Sdj}\phi(\mx,\lambda,\mxi) \Bigl(1-\chi_{A_p\cap A_f}(\mx,\lambda,\mxi)\Bigr) \,d\mu\,,
\end{split}
\end{equation*}
hence
\begin{align*}
\int\limits_{\Omega\times K \times \Sdj}\phi(\mx,\lambda,\mxi) \,d\mu 
	&= \int\limits_{\Omega\times K \times \Sdj}\phi(\mx,\lambda,\mxi) \chi_{A_p\cap A_f}(\mx,\lambda,\mxi)\,d\mu \\
&=\int\limits_{\Omega \times \Sdj}\int_K\phi(\mx,\lambda,\mxi) \chi_{A_p\cap A_f}(\mx,\lambda,\mxi)g(\mx,\lambda,\mxi)\, d\lambda d\nu(\mx,\mxi)\,,
\end{align*}
where we have used the representation \eqref{repr}. In order to obtain that $\mu$ is a zero measure, 
it is sufficient to show that the right-hand side in the expression above equals zero. 
This holds due to the non-degeneracy condition \eqref{lp-111}.
Indeed, we have that for a.e.~$\mx\in\Omega$ with respect to the Lebesgue measure and for any $\mxi\in\Sdj$
$$
 \chi_{A_p\cap A_f}(\mx,\lambda,\mxi)=0 
$$ 
for a.e.~$\lambda\in K$ with respect to the Lebesgue measure. Thus, the inner integral is equal to zero 
for a.e.~$\mx\in\Omega$ with respect to the Lebesgue measure and for any $\mxi\in\Sdj$.
It is left to note that by Theorem \ref{mu-repr-h} the $\Sdmj$-projection of the measure $\nu$ is absolutely continuous
with respect to the Lebesgue measure, hence the inner integral is equal to zero for 
$\nu$-a.e.~$(\mx,\mxi)\in\Omega\times\Sdj$.
Therefore, $\mu=0$.

Let us finally discuss on the choice of $(v_n)$. 
To this end, let
$$
V_n(\mx)=\int_\R \Bigl(h_n(\mx,\lambda)-h_0(\mx,\lambda)\Bigr)\rho(\lambda) d\lambda\,,
$$
and let us define
$$
\tilde v_n(\mx)=
\left\{
\begin{array}{lcr}
\frac{V_n(\mx)}{|V_n(\mx)|} &,& V_n(\mx)\neq 0 \,,\\
0 &,& V_n(\mx)=0 \,.
\end{array}
\right.
$$
Since $(\tilde v_n)$ is bounded in $L^\infty(\Omega)$, it has a weakly-$*$
converging subsequence, whose limit we denote by $\tilde v\in L^\infty(\Omega)$.
Passing to that subsequence (not relabelled), we define
$$
v_n := \tilde v_n -\tilde v \,,
$$
which obviously has all the properties used in the proof for $(v_n)$.
Now we have
\begin{align*}
\lim\limits_{n\to \infty} & \int_\Omega \varphi(\mx) \biggl| \int_K \rho(\lambda) \Bigl(h_n(\mx,\lambda)-h_0(\mx,\lambda)\Bigr) d\lambda\biggr|\,  d\mx \\
&= \int_{\Omega\times K}\varphi(\mx)\rho(\lambda)\Bigl(h_n(\mx,\lambda)-h_0(\mx,\lambda)\Bigr) 
	\overline{\tilde v_n(\mx)} \, d\mx d\lambda \\
&= \int_{\Omega\times K}\varphi(\mx)\rho(\lambda)h_n(\mx,\lambda) 
\overline{v_n(\mx)} \,d\mx d\lambda \\
&= \mu(\varphi\otimes\rho\otimes 1) = 0 \,.
\end{align*}
Thus, the proof is over.
\end{proof}

\begin{remark}
The result of the previous theorem would remain true if we ask for the non-degeneracy condition given 
in part c) of Introduction to hold only on fixed compact sets $\Omega, K$ containing the support of all
$h_n$'s.
\end{remark}

\smallskip

\section{Fully heterogeneous degenerate parabolic equation with rough coefficients -- existence proof}\label{sec:application}
% =========================================================================

In this section, we prove existence of a weak solution to the Cauchy problem corresponding to equation \eqref{d-p} augmented with the initial conditions

\begin{align}
%\label{d-p} \pa_t u + \Div_\mx \bigl(\mff(t,\mx, u(t,\mx)) \bigr) =& \,\Div_\mx  \left( a(t,\mx, u(t,\mx)) \nabla u \right) + s(t,\mx,u), \\
\label{ic}
u|_{t=0}&=u_0 \in {L}^1({\R}^d)\cap {L}^\infty({\R}^d) \;.
\end{align}

In addition to a), b), and c), we assume:
\begin{itemize}
\item[i)] there exist $m,M\in\R$ such that the initial data $u_0\in{L}^1({\R}^d)\cap {L}^\infty({\R}^d)$ 
is bounded between $m$ and $M$ and the flux and diffusion equal zero at $\lambda=m$ and $\lambda=M$:
\begin{equation}
\label{bnd-fl}
m\leq u_0(\mx) \leq M \qquad {\rm and} \qquad 
\begin{aligned}
  &\mff(t,\mx,m)=\mff(t,\mx,M)=0 \\
 & a(t,\mx,m)=a(t,\mx,M)=0 
\end{aligned}
\qquad  {\rm a.e.} \ \ (t,\mx)\in \R^{d+1}_+ \;;
\end{equation}

\item[ii)] the convective term $\mff=\mff(t,\mx,\lambda)$ belongs to $L^p_{loc}(\R^{d+1}_+; C^1([m,M])^d)$ for some $p>1$, 
$$
\sup_{\lambda\in [m,M]}|f(\cdot,\cdot,\lambda)|\in L^p_{loc}(\R^{d+1}_+) \,,
$$
and
\begin{equation*}
%\label{meas} 
\Div_{\mx} \mff \in
{\cal M}(\R^{d+1}_+\times [m,M]) \,,
\end{equation*} 
where ${\cal M}(X)$ denotes the space of Radon measures on $X\subseteq\Rd$; 

\item[iii)] the matrix $A(t,\mx,\lambda)\in C^{1,1}(\R^{d+1}\times [m,M];\R^{d\times d})$
is symmetric and non-decreasing with respect to $\lambda\in [m,M]$, i.e.~the (diffusion) matrix $a(t,\mx,\lambda):=A'(t,\mx,\lambda)$ satisfies
\begin{equation*}
\langle a(t,\mx,\lambda)\mxi \,|\, \mxi \rangle \geq 0 \,,
\end{equation*} 
and $a$ satisfies b) from the Introduction.
%
%\item[iv)] $f:=\pa_\lambda\mff$ and $a=A'$ satisfy the non-degeneracy assumption given in f) from the previous section.
\end{itemize}

\begin{remark}\label{remX} 
We note that condition (i) essentially provides the maximum principle for entropy solutions of \eqref{d-p}+\eqref{ic}. More precisely, if the flux is $C^1$ with respect to all the variables and the initial data $u_0$ are bounded between two stationary (and thus both entropy and weak) solutions $m$ and $M$ (which is why we assume \eqref{bnd-fl}), then the entropy solution to \eqref{d-p}+\eqref{ic} remains bounded between $m$ and $M$. This is called the maximum principle (see e.g.~\cite{CK}). % For more details, see the proof of Theorem \ref{main-tvr}. 
\end{remark}

We are going to work with the entropy solutions in the sense given in \cite[Definition 2.1]{CK}. The conditions ({\bf D}.1) and ({\bf D}.2) describe regularity effects of the diffusion term in \eqref{d-p}. We shall rewrite them in the form \eqref{regularity}. More precisely, we have the following lemma.

\begin{lemma}
\label{condf}
Assume that the flux $\mff$ and diffusion $a$ from \eqref{d-p} are regular in the sense that 
$$
\mff \in C^1(\R^{d+1}_+\times \R;\R^d) \ \ {\rm and} \ \ a\in C^2(\R^{d+1}_+\times \R;\R^{d\times d})\,. 
$$ Then, a bounded entropy solution to \eqref{d-p} in the sense of \cite[Definition 2.1]{CK} satisfies the following equality
\begin{equation}
\label{regularity-2}
\begin{split}
\sum\limits_{k,j=1}^d \pa^2_{x_k x_j}\Bigl( & a_{kj}(t,\mx,\lambda) \sgn\bigl(u(t,\mx)-\lambda\bigr) \Bigr)
	-\sum\limits_{k,j=1}^d \pa_{x_k} \Bigl( a_{kj,x_j}(t,\mx,\lambda) \sgn\bigl(u(t,\mx)-\lambda\bigr) \Bigl)\\
&= \sum\limits_{k,j=1}^d \pa_{x_j} \Bigl(\sigma_{kj}(t,\mx,\lambda) \beta_k(t,\mx,\lambda)\Bigr)
\end{split}
\end{equation} in the sense of distributions, where
\begin{equation}
\label{beta-es}
\beta_k(t,\mx,\lambda)=2\delta\bigl(u(t,\mx)-\lambda\bigr) \sum\limits_{s=1}^d \biggl( \pa_{x_s}\Bigl(\int_0^{u(t,\mx)} \sigma_{sk}(t,\mx,w)\, dw \Bigr)- \int_0^{u(t,\mx)}\sigma_{sk,x_s}(t,\mx,w) \,dw\biggr) \,.
\end{equation}
\end{lemma}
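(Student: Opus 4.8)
The plan is to start from the kinetic formulation of an entropy solution in the sense of \cite[Definition 2.1]{CK}. Recall that the condition ({\bf D}.2) of that definition asserts precisely the chain-rule identity for the parabolic dissipation: for a bounded entropy solution $u$ one has, in the sense of distributions,
\begin{equation*}
\sum_{k,j=1}^d \pa_{x_k x_j}^2\Bigl(A_{kj}(t,\mx,u(t,\mx))\Bigr)
= \sum_{k,j=1}^d \pa_{x_k x_j}^2\int_0^{u(t,\mx)} a_{kj}(t,\mx,w)\,dw + (\text{lower order in }x) \,,
\end{equation*}
together with the precise description of the defect. The first step is therefore to translate ({\bf D}.1)--({\bf D}.2) into the language of the kinetic function $\chi(t,\mx,\lambda)=\sgn(u(t,\mx)-\lambda)$ (or its half-version), differentiating the chain rule in $\lambda$: since $\pa_\lambda \chi = -2\delta(u-\lambda)$, applying $\pa_\lambda$ to the identity for $A_{kj}(t,\mx,u)$ produces on the left-hand side the combination $\sum_{k,j}\pa_{x_k x_j}^2(a_{kj}(t,\mx,\lambda)\,\text{(something)})$ and, after subtracting the cross terms coming from the $x$-dependence of $a$, exactly the left-hand side of \eqref{regularity-2}.

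The second step is to identify the right-hand side. Here I would use the square-root factorization $a=\sigma^2$, $\sigma=\sigma^T$, from hypothesis b), to write $a_{kj}=\sum_s \sigma_{ks}\sigma_{sj}$, and introduce the antiderivatives $\Sigma_{sk}(t,\mx,w)=\int_0^w \sigma_{sk}(t,\mx,v)\,dv$ that appear inside the parentheses in \eqref{beta-es}. The point is that $\sum_{k,j}\pa_{x_k x_j}^2(a_{kj}(t,\mx,\lambda)\chi) - \sum_{k,j}\pa_{x_k}(a_{kj,x_j}\chi)$ can be massaged, using $\pa_{x_j}\chi(t,\mx,\lambda) = \delta(u-\lambda)\pa_{x_j}u$ (understood distributionally) and the product/chain rule, into a single $x$-divergence $\sum_{k,j}\pa_{x_j}(\sigma_{kj}\beta_k)$ with $\beta_k$ carrying the Dirac mass $\delta(u-\lambda)$ and the difference of the ``full'' spatial derivative of $\Sigma_{sk}(t,\mx,u)$ and the ``partial'' one $\int_0^u \sigma_{sk,x_s}(t,\mx,w)\,dw$, i.e.\ exactly the chain-rule correction term. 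The factor $2$ in \eqref{beta-es} is the same $2$ coming from $\pa_\lambda\sgn(u-\lambda)=-2\delta(u-\lambda)$. Throughout, the regularity assumptions $\mff\in C^1$, $a\in C^2$ (hence $\sigma$ at least $C^1$, after a standard square-root regularity argument, or one works directly with $a$ and its $C^2$ antiderivative $A$) guarantee that all the products of a distribution $\delta(u-\lambda)$ with continuous coefficients and with $\pa_{x_j}u\in L^2_{loc}$ are well defined, and that the formal integrations by parts are licit when tested against $C_c^\infty$ functions of $(t,\mx,\lambda)$.

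The main obstacle is the bookkeeping of the chain rule for BV-type / $L^2_{loc}$-gradient functions: the term $\pa_{x_s}(\Sigma_{sk}(t,\mx,u(t,\mx)))$ expands, by the chain rule, into $(\pa_{x_s}\Sigma_{sk})(t,\mx,u) + \sigma_{sk}(t,\mx,u)\,\pa_{x_s}u$, and one must check carefully that it is precisely the \emph{first} of these two pieces, minus $\int_0^u \sigma_{sk,x_s}(t,\mx,w)\,dw$, that survives inside $\beta_k$, while the genuinely ``diffusive'' piece $\sigma_{sk}(t,\mx,u)\pa_{x_s}u$ is what reconstitutes $\sum_{k,j}\pa_{x_k x_j}^2\int_0^u a_{kj}$ on the left. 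This requires invoking the chain rule of \cite[Definition 2.1 (D.2)]{CK} (equivalently \cite[Definition 2.1(ii)]{CP}) in exactly the form in which it is postulated for entropy solutions, rather than re-deriving it; the regularity hypotheses of the lemma are there precisely to make that invocation unconditional. Once the two sides are matched after testing against an arbitrary $\varphi\in C_c^\infty(\R^{d+1}_+\times\R)$, integrating by parts to move all derivatives onto $\varphi$, and collecting terms, the identity \eqref{regularity-2} with $\beta_k$ as in \eqref{beta-es} follows. I would close by noting that $\beta_k(t,\mx,\cdot)$ is, for a.e.\ $(t,\mx)$, a measure supported at $\lambda=u(t,\mx)$ with mass controlled by $\sum_s|\pa_{x_s}\Sigma_{sk}(t,\mx,u) - \int_0^u\sigma_{sk,x_s}|$, which is the form needed to verify hypothesis f) with the $L^2(\Omega;\mathcal M(K))$ bound \eqref{beta-bnd} — this is the bridge to Section~\ref{sec:velocity_averaging}, though it is not part of the present lemma's statement.
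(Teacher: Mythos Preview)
Your approach differs from the paper's in a fundamental way. The paper does not work directly with the entropy solution $u$; instead it uses that, under the regularity hypotheses on $\mff$ and $a$, the entropy solution is the strong $L^1_{loc}$ limit of smooth vanishing-viscosity approximations $u_\eps$ (cf.~\cite[Section 6]{CP}). For each smooth $u_\eps$ the identity \eqref{regularity-2} is verified by a direct classical application of the product rule---one simply writes $\beta_k^\eps=\sum_s\sigma_{sk}(t,\mx,\lambda)\,\pa_{x_s}\sgn(u_\eps-\lambda)$ and uses $a_{kj}=\sum_s\sigma_{ks}\sigma_{sj}$---and then both sides are passed to the limit $\eps\to 0$. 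This sidesteps entirely the need to manipulate the formal object $\delta(u-\lambda)\,\pa_{x_j}u$ for the non-smooth limit $u$. The chain rule ({\bf D}.2) is used only to \emph{interpret} $\beta_k$ (see \eqref{beta-es-1}) and to identify the limit of the right-hand side, not to derive the identity itself.

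Your direct route has a bookkeeping error that would derail the computation. You claim that the ``first piece'' $(\pa_{x_s}\Sigma_{sk})(t,\mx,u)-\int_0^u\sigma_{sk,x_s}$ is what survives inside $\beta_k$, while the ``diffusive'' piece $\sigma_{sk}(t,\mx,u)\,\pa_{x_s}u$ reconstitutes the left-hand side. In fact it is exactly the reverse: since $(\pa_{x_s}\Sigma_{sk})(t,\mx,u)=\int_0^u\sigma_{sk,x_s}(t,\mx,w)\,dw$, the first piece minus its correction vanishes identically, and the whole bracket in \eqref{beta-es} equals the diffusive term $\sigma_{sk}(t,\mx,u)\,\pa_{x_s}u$. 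The reason for writing $\beta_k$ in the integrated form \eqref{beta-es} rather than simply as $2\delta(u-\lambda)\sum_s\sigma_{sk}(t,\mx,u)\,\pa_{x_s}u$ is that the latter has no a priori meaning for the merely bounded limit $u$, whereas the former does by ({\bf D}.1). Note also that no $\lambda$-differentiation enters the proof: \eqref{regularity-2} already lives at fixed $\lambda$, and there is no term of the form $\sum_{k,j}\pa^2_{x_kx_j}\int_0^u a_{kj}$ on its left-hand side.
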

\begin{proof}
Let us first explain the meaning of \eqref{beta-es}. It is an object belonging to $L^2_{loc}(\R^{d+1}_+;{\cal M}(\R))$ (see \eqref{beta-bnd}) defined for every $\rho\in C_0(\R)$ and $\varphi\in L^2_c(\R^{d+1}_{+})$ by
\begin{equation}
\label{beta-es-1}
\begin{split}
&\int_{\R^{d+1}_+} \varphi(t,\mx) \int_{\R} \rho(\lambda) d\beta_k(t,\mx,\lambda) \,dtd\mx
\\&=2\int_{\R^{d+1}_+} \varphi(t,\mx) \rho(u(t,\mx)) \sum\limits_{s=1}^d \biggl( \pa_{x_s}\Bigl(\int_0^{u(t,\mx)} \sigma_{sk}(t,\mx,w)\, dw \Bigr)- \int_0^{u(t,\mx)}\sigma_{sk,x_s}(t,\mx,w) \,dw\biggr) dt d\mx
\\&=2\int_{\R^{d+1}_+} \varphi(t,\mx) \sum\limits_{s=1}^d \biggl( \pa_{x_s}\Bigl(\int_0^{u(t,\mx)} \rho(w)\sigma_{sk}(t,\mx,w)\, dw \Bigr)- \int_0^{u(t,\mx)}\rho(w)\sigma_{sk,x_s}(t,\mx,w) \,dw\biggr) dt d\mx \,,
\end{split}
\end{equation}
where in the last step we have used the chain rule given in \cite[({\bf D}.2)]{CK}. We also note that the computations from the above are correct since by \cite[({\bf D}.1)]{CK}
$$
\sum\limits_{s=1}^d \biggl( \pa_{x_s}\Bigl(\int_0^{u(t,\mx)} \sigma_{sk}(t,\mx,w)\, dw \Bigr)- \int_0^{u(t,\mx)}\sigma_{sk,x_s}(t,\mx,w) \,dw\biggr) \in L_{loc}^2(\R^{d+1}_+)\,.
$$ From the same reason, it also holds 
\begin{equation}
\label{beta-bound}
\| \beta_k(t,\mx,\cdot) \|_{{\cal M}(\R)}=\sum\limits_{s=1}^d \biggl( \pa_{x_s}\Bigl(\int_0^{u(t,\mx)} \sigma_{sk}(t,\mx,w)\, dw \Bigr)- \int_0^{u(t,\mx)}\sigma_{sk,x_s}(t,\mx,w) \,dw\biggr) \in L_{loc}^2(\R^{d+1}_+).
\end{equation}

Next, note that the entropy solution to \eqref{d-p}+\eqref{ic}, is obtained as the strong $L^1_{loc}(\R^{d+1}_+)$ limit of the vanishing viscosity approximation to \eqref{d-p} (see \cite[Section 6]{CP}). Denote the family of solutions obtained by the vanishing viscosity approximation by $(u_\eps)$ and remark that, by taking a smooth approximation of the initial data, we can assume that $u_\eps$ are smooth functions. It satisfies (by the direct application of the derivative product rule and using $a_{jk}=a_{kj}=\sum\limits_{s=1}^d\sigma_{ks} \, \sigma_{sj}$)
\begin{equation}
\label{regularity-3}
\begin{split}
\sum\limits_{k,j=1}^d \pa^2_{x_k x_j}\Bigl( & a_{kj}(t,\mx,\lambda) \sgn\bigl(u_\eps(t,\mx)-\lambda\bigr) \Bigr)
	-\sum\limits_{k,j=1}^d \pa_{x_k} \Bigl( a_{kj,x_j}(t,\mx,\lambda) \sgn\bigl(u_\eps(t,\mx\bigr)-\lambda) \Bigr)\\
&= \sum\limits_{k,j=1}^d \pa_{x_j} \Bigl(\sigma_{kj}(t,\mx,\lambda) \beta^\eps_k(t,\mx,\lambda)\Bigr)\,,
\end{split}
\end{equation}
where 
\begin{equation*}
%\label{beta-eps}
\begin{split}
\beta^{\eps}_k(t,\mx,\lambda)&=2\delta\bigl(u_\eps(t,\mx)-\lambda\bigr) \sum\limits_{s=1}^d \biggl( \pa_{x_s}\Bigl(\int_0^{u_\eps(t,\mx)} \sigma_{sk}(t,\mx,w)\, dw \Bigr)- \int_0^{u_\eps(t,\mx)}\sigma_{sk,x_s}(t,\mx,w) \,dw\biggr)
\\&=2\delta\bigl(u_\eps(t,\mx)-\lambda\bigr) \sum\limits_{s=1}^d  \sigma_{sk}\bigl(t,\mx,u_\eps(t,\mx)\bigr) \pa_{x_s} u_\eps(t,\mx)\\
&= \sum\limits_{s=1}^d  \sigma_{sk}(t,\mx,\lambda) 
	\pa_{x_s} \sgn\bigl(u_\eps(t,\mx)-\lambda\bigr) \,,
\end{split}
\end{equation*} 
and the last representation of $\beta^{\eps}_k$ makes \eqref{regularity-3} clear.

Now, we note that by letting $\eps\to 0$, the left-hand side of \eqref{regularity-3} converges toward the left-hand side of \eqref{regularity-2} in the sense of distributions (since $u_\eps \to u$ strongly in $L^1_{loc}(\R^{d+1}_+)$). As for the right-hand side, we note that $u_\eps$ satisfies the relation analogical to \eqref{beta-es-1}: 
\begin{equation*}
%\label{beta-es-\eps}
\begin{split}
&\int_{\R^{d+1}_+} \varphi(t,\mx) \int_{\R} \rho(\lambda) d\beta^\eps_k(t,\mx,\lambda) \,dtd\mx\\
&=2\int_{\R^{d+1}_+} \varphi(t,\mx) \sum\limits_{s=1}^d \biggl( \pa_{x_s}\Bigl(\int_0^{u_\eps(t,\mx)}\rho(w) \sigma_{sk}(t,\mx,w)\, dw \Bigr)\\
& \qquad\qquad\qquad\qquad\qquad- \int_0^{u_\eps(t,\mx)}\rho(w)\sigma_{sk,x_s}(t,\mx,w) \,dw\biggr) dt d\mx\\
&=-2 \sum\limits_{s=1}^d \int_{\R^{d+1}_+}  \biggl((\pa_{x_s}\varphi)(t,\mx)\int_0^{u_\eps(t,\mx)}\rho(w) \sigma_{sk}(t,\mx,w) \,dw \\
& \qquad\qquad\qquad\qquad\qquad 
+\varphi(t,\mx)\int_0^{u_\eps(t,\mx)}\rho(w)\sigma_{sk,x_s}(t,\mx,w) \,dw
\biggr) dt d\mx \,,
\end{split}
\end{equation*}
which also converges toward the expression for $\beta_k$ given by \eqref{beta-es-1}. This concludes the proof.
\end{proof}

The main theorem of the section is the following.

\begin{theorem}
\label{main-tvr} Let $d\geq 1$, and let $u_0,\mff$ and $A$ satisfy conditions {\rm a)-c)} and {\rm i)-iv)} above. 
Then there exists a weak solution to \eqref{d-p} augmented with the initial condition \eqref{ic}.
\end{theorem}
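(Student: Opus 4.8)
The plan is to realise the weak solution $u$ as a strong $L^1_{loc}$-limit of entropy solutions of regularised problems, using the velocity averaging result of Theorem~\ref{VA-thm} to supply the missing compactness, and then to pass to the limit in the weak formulation. \emph{Step~1 (regularisation).} I would mollify $\mff$ and $A$ in the $(t,\mx)$-variables, and also mollify $u_0$, producing smooth coefficients $\mff_n\to\mff$, $A_n\to A$ and data $u_0^n\to u_0$, arranged so that conditions b) and i)--iv) are preserved: $\mff_n(\cdot,\cdot,m)=\mff_n(\cdot,\cdot,M)=0$, $a_n:=A_n'\geq 0$ admits a symmetric Lipschitz square root $\sigma_n$, and $m\leq u_0^n\leq M$. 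For such smooth coefficients the well-posedness theory of \cite{CP,CK} gives a unique bounded entropy solution $u_n$ of \eqref{d-p}+\eqref{ic} with data $(\mff_n,A_n,u_0^n)$, obtained as the vanishing-viscosity limit; by the maximum principle (Remark~\ref{remX}, condition i)) one has $m\leq u_n\leq M$ uniformly in $n$. From the viscous construction I would also retain the \emph{uniform} parabolic-dissipation (energy) estimate, which controls $\nabla_\mx\bigl(\int_0^{u_n}\sigma_n(t,\mx,w)\,dw\bigr)$ in $L^2_{loc}(\R^{d+1}_+)$ independently of both the viscosity and $n$ (the convective contribution to the energy identity is absorbed using that $u_n$ is bounded and $\Div_\mx\mff_n$ is uniformly bounded in ${\cal M}$, condition ii)).

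\emph{Step~2 (kinetic reformulation and verification of a)--f)).} Set $h_n(t,\mx,\lambda):=\sgn(u_n(t,\mx)-\lambda)-\sgn(-\lambda)$, which is uniformly bounded and, by the maximum principle, compactly supported in $\lambda$. Following \cite{CK}, $h_n$ solves a diffusive transport equation of the form \eqref{TEq} (time incorporated among the space variables, so that ``$d$'' there is $d+1\geq 2$ here) with $f_n=\pa_\lambda\mff_n$, $a_n$; the source $G_n$ collects the nonnegative entropy and parabolic kinetic defect measures together with $(\Div_\mx\mff_n)(t,\mx,\cdot)$, uniformly bounded in ${\cal M}$ by ii), and $P_n$ the remaining first-order heterogeneous terms. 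Since a bounded, weakly-$\star$ convergent sequence of measures supported in a fixed compact set converges strongly in $L^{r_0}_{loc}(\R_\lambda;{\rm W}^{-1,r_0}_{loc})$ for $r_0>1$ small, conditions d) and e) hold; condition f) is precisely \eqref{regularity}, supplied by Lemma~\ref{condf} (applicable as $\mff_n,a_n$ are smooth), with the uniform bound \eqref{beta-bnd} furnished by the energy estimate of Step~1. After a standard localisation by cut-offs in $(t,\mx)$ — which is legitimate by the Remark following Theorem~\ref{VA-thm}, and only modifies $G_n,P_n$ by lower-order commutator terms that still obey d)--f) — one may regard the cut-off functions as uniformly compactly supported, bounded in $L^q$ for any $q>2$, and weakly convergent along a subsequence to $h_0=\sgn(u-\lambda)-\sgn(-\lambda)$ for some bounded $u$. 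Finally, the mild $n$-dependence of the coefficients is harmless: $f_n\to f$ in $L^p_{loc}$ and $a_n\to a$ in $L^\infty_{loc}$, so (as $|h_n|$ is uniformly bounded and the relevant Fourier multipliers are bounded symbols) the difference terms vanish in the H-measure limit, and the non-degeneracy condition c) is used only with the limiting coefficients.

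\emph{Step~3 (compactness and passage to the limit).} Theorem~\ref{VA-thm} now yields, along a subsequence and on each ball, $\int_\R\rho(\lambda)h_n\,d\lambda\to\int_\R\rho(\lambda)h_0\,d\lambda$ strongly in $L^1_{loc}(\R^{d+1}_+)$ for every $\rho\in C^1_c(\R)$; a diagonal argument then gives $L^1_{loc}$-convergence on all of $\R^{d+1}_+$. Choosing $\rho\equiv 1$ on $[m,M]$ and using $u_n\in[m,M]$ one recovers $u_n\to u$ strongly in $L^1_{loc}$, hence — by the uniform $L^\infty$ bound — in every $L^s_{loc}$, $s<\infty$. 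It remains to pass to the limit in the weak formulation. For the convection term, $\mff_n(t,\mx,u_n)\to\mff(t,\mx,u)$ in $L^p_{loc}$ by continuity of $\mff$ in $\lambda$, the uniform bound on $u_n$, and $\sup_{\lambda\in[m,M]}|\mff(\cdot,\cdot,\lambda)|\in L^p_{loc}$ from ii). For the diffusion term, one rewrites $\Div_\mx(a_n(t,\mx,u_n)\nabla_\mx u_n)$ via second-order derivatives of $A_n(t,\mx,u_n)$ together with first-order derivatives of $\int_0^{u_n}\sigma_n(t,\mx,w)\,dw$; the latter gradients are bounded in $L^2_{loc}$ (Step~1), hence converge weakly, and combined with the strong convergence $u_n\to u$ one identifies the distributional limit as $\Div_\mx(a(t,\mx,u)\nabla_\mx u)$ and checks that $\int_0^u\sigma(t,\mx,w)\,dw$ has the regularity required of a weak solution. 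The initial condition $u|_{t=0}=u_0$ is inherited from the $L^1$-traces of the $u_n$, so $u$ is the sought weak solution.

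\textbf{Main obstacle.} The delicate point is Step~2: making the regularisation compatible with \emph{all} hypotheses of Theorem~\ref{VA-thm} at once — above all obtaining condition f) with a constant $c_\beta$ independent of the regularisation, i.e.\ a parabolic-dissipation estimate uniform both in the viscosity and in the mollification parameter, and checking that the entropy and parabolic defect measures, along with $(\Div_\mx\mff_n)(t,\mx,\cdot)$, are uniformly bounded so that d) holds; the commutator terms introduced by the localisation must also be kept within d)--f). A secondary technical hurdle is the identification of the heterogeneous diffusion limit in Step~3 from only the strong $L^1_{loc}$ convergence of $u_n$ together with the weak $L^2_{loc}$ bounds on the associated gradients.
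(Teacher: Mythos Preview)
Your proposal is essentially correct and follows the same strategy as the paper's proof: regularise, pass to the kinetic formulation, verify d)--f), apply Theorem~\ref{VA-thm}, and conclude. Two simplifications in the paper are worth noting. First, the paper regularises \emph{only} $\mff$ (and leaves $A$ and $u_0$ alone), which avoids the extra uniformity questions you raise about $\sigma_n$ and the $c_\beta$ bound; with $a$ fixed, condition~f) follows directly from Lemma~\ref{condf} and the $L^2_{loc}$ bound \eqref{beta-bound} coming from the entropy formulation of \cite{CK}, so no separate parabolic-dissipation estimate uniform in a mollification of $A$ is needed. Second, rather than arguing that the $n$-dependence of $f_n$ ``vanishes in the H-measure limit'', the paper writes the kinetic equation \eqref{kinetic-n} with the \emph{limiting} coefficients $f,a$ on the left-hand side and puts $\Div_\mx\bigl((f-f_n)h_n\bigr)$ explicitly into the source $P_n$ on the right; since $f_n\to f$ in $L^p_{loc}$ uniformly in $\lambda\in[m,M]$ and $|h_n|\leq 1$, this $P_n$ converges strongly in $L^{p_0}_{loc}$, so condition~e) is immediate and Theorem~\ref{VA-thm} applies verbatim with fixed coefficients. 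With these two adjustments your Step~3 also becomes shorter: the diffusion term has fixed $a$, so the passage to the limit in the weak formulation is straightforward once $u_n\to u$ strongly in $L^1_{loc}$.
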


\begin{proof} 
Consider the sequence of admissible solutions to the following regularised Cauchy problems
\begin{align}
\label{d-p-r}
&\pa_t u_n+\Div_{\mx} \mff_n(t,\mx,u_n)=\Div_\mx  \left( a(t,\mx, u_n) \nabla u_n \right)  \,,  
\\
&u_n|_{t=0}=u_0 \in {L}^1(\R^d)\cap {L}^\infty(\R^d) \,,
\label{i-c-r}
\end{align} 
where $\mff_n$ is a smooth regularisation with respect to $(t,\mx)$ of $\mff$
obtained by the convolution of $\mff$ with a standard mollifier.
Thus,
\begin{equation}\label{eq:fn-alpha-beta}
\mff_n(t,\mx,m)=\mff_n(t,\mx,M)=0 \;,
\end{equation}
and for any compact $K\subseteq\R^{d+1}_+$ it holds (see \cite[(4.10)]{LM5})
\begin{equation*}
%\label{f-conv}
\lim_{n\to\infty}\Bigl\| \sup_{\lambda\in[m,M]}\bigl|\mff_n(\cdot,\cdot,\lambda)-\mff(\cdot,\cdot,\lambda)\bigr|\Bigr\|_{{L}^p(K)} =
\lim_{n\to\infty}\Bigl\| \sup_{\lambda\in[m,M]}\bigl|\partial_\lambda\mff_n(\cdot,\cdot,\lambda)-\partial_\lambda\mff(\cdot,\cdot,\lambda)\bigr|\Bigr\|_{{L}^p(K)} =0 \,.
\end{equation*}

We denote by $A$ the matrix valued function such that $\pa_\lambda A(t,\mx,\lambda)=a(t,\mx,\lambda)$.

It is well known that there exists a solution $u_n$ to such an equation
satisfying for a.e.~$\lambda\in \R$ the Kru\v zkov-type entropy equality
(see \cite{CK} where existence was shown under more restrictive conditions), i.e.
\begin{align}
\label{e-c-n} \pa_t |u_n-\lambda|&+\Div_\mx\Bigl( \sgn(u_n-\lambda)\bigl(\mff_n(t,\mx,u)-\mff_n(t,\mx,\lambda) \bigr)
\Bigr)\\ \nonumber
&-\sum\limits_{k,j=1}^d \pa_{x_j x_k} \Bigl( \sgn(u_n-\lambda)
\bigl(A_{kj}(t,\mx,u_n)-A_{kj}(t,\mx,\lambda)\bigr)\Bigr)\\
&+ \sum\limits_{k,j=1}^d \pa_{x_j} \Bigl( \sgn(u_n-\lambda) \left( A_{kj,x_k}(t,\mx,u_n)-A_{kj,x_k}(t,\mx,\lambda)  \right) \Bigr) = \zeta_n(t,\mx,\lambda) \,,
\nonumber
\end{align} 
where $\zeta_n\in {\rm C}(\R_\lambda;w\star-{\cal M}(\R^{d+1}_+))$ and ${\cal M}(\R^{d+1}_+)$ is the space of Radon measures. We also note that the sequence $(\zeta_n)$ is bounded in ${\cal M}(\R^{d+1}_+\times \R)$ (when we consider $\zeta_n$ as a Radon measure in all three variables).

By finding derivative with respect to $\lambda$ in \eqref{e-c-n}, we see that the (kinetic) function
\begin{equation*}
%\label{equil-n} 
h_n(t,\mx,\lambda)=\sgn(u_n(t,\mx)-\lambda)=
 -\pa_\lambda |u_n(t,\mx)-\lambda|
\end{equation*} is a weak solution to the following linear equation:
\begin{align}
\label{kinetic-n} 
\pa_t h_n &+ \Div_\mx \bigl( {f}(t,\mx,\lambda)
h_n\bigr)- 
\sum\limits_{k,j=1}^d \pa_{x_j x_k} \Bigl( h_n \, a_{kj}(t,\mx,\lambda)\Bigr)\\
&+ \sum\limits_{k,j=1}^d \pa_{x_j} \Bigl( a_{kj,x_k}(t,\mx,\lambda) \, h_n  \Bigr)=\Div_\mx \bigl( (f-{f}_n)(t,\mx,\lambda)
h_n\bigr) -\pa_\lambda
\zeta_n(t,\mx,\lambda)\, , \nonumber
\end{align} where ${f}_n=\pa_\lambda\mff_n$, ${f}=\pa_\lambda\mff$, and $a=A'$. 

We aim to show that $(h_n)$ satisfies conditions of Theorem \ref{VA-thm}. To this end, we note that by Lemma \ref{condf}, condition f) of Theorem \ref{VA-thm} is fulfilled.

According to i) and \eqref{eq:fn-alpha-beta}, we know that $(u_n)$ satisfies the maximum principle, i.e.~every $u_n$, $n\in \N$, remains bounded between $m$ and $M$ (see Remark \ref{remX}). Therefore, the sequence $(\zeta_n)$ is bounded 
in $\mathrm{C}(\R_\lambda; w\star-{\cal M}(\R^{d+1}_+))$. This implies that $(\zeta_n)$ is actually strongly precompact in ${L}^r_{loc}(\R_\lambda; \mathrm{W}_{loc}^{-1,q}(\R^{d+1}_+))$ 
for any $r\geq 1$ and $q\in \bigl[1,\frac{d+1}{d}\bigr)$ 
(one can prove this in the same manner as in \cite[Theorem 1.6]{Evans} using 
that $\mathrm{W}^{\frac{1}{2},s}(\R^{d+1})$ is compactly embedded into 
$\mathrm{C}(\mathop{Cl} K)$, $K\subset\subset\R^{d+1}$, for $\frac{s}{2}<d+1$), 
and denote by $\zeta$ the limit along an appropriate subsequence 
(not relabelled and implied in what follows). This provides conditions d) and e).

From the above, we see that equation \eqref{kinetic-n} satisfies conditions of Theorem \ref{VA-thm}. More precisely, if we denote by $h$ the weak-$\star$ limit along a subsequence of the sequence $h_n$, we have from Theorem \ref{VA-thm} that  (along the non-relabeled subsequence)
$$
\int_{\R} \rho(\lambda) (h_n(t,\mx,\lambda)-h(t,\mx,\lambda)) d\lambda \to 0 \ \ {\rm as} \ \ n\to \infty
$$ strongly in ${L}^1_{loc}(\R^{d+1}_+)$ for any $\rho\in{\rm C}^1_c(\R)$.
Due to density arguments, we can insert $\rho(\lambda)=\chi_{[m,M]}$, obtaining
\begin{align*}
2u_n(t,\mx)-m-M &= \int_{m}^{u_n(t,\mx)} d\lambda - \int_{u_n(t,\mx)}^{M} d\lambda \nonumber \\
&= \int_{m}^{M} {\rm sgn}(u_n(t,\mx)-\lambda ) \,d\lambda
\overset{n\to\infty}{\longrightarrow} \int_{m}^{M} h(t,\mx,\lambda) \,d\lambda \,,% \label{eq:strong_conv_un}
\end{align*} where the latter convergence is, as before, in ${L}^1_{loc}(\R^{d+1}_+)$.
Therefore, $(u_n)$ strongly converges in ${L}^1_{loc}(\R^{d+1}_+)$ toward
$$
u(t,\mx) := {\int_{m}^{M} h(t,\mx,\lambda) d\lambda\,+{m}+{M}\over 2} \,.
$$

Clearly, the latter strong convergence implies that the function $u$ is a weak solution to \eqref{d-p}+\eqref{ic}. \end{proof}

\section{Regularity of entropy solutions to homogeneous degenerate parabolic equations}

In this section, we are interested in the Sobolev regularity properties of entropy solutions to the homogeneous (autonomous) variant of \eqref{d-p}:

\begin{equation}\label{d-p-hom}
\begin{split}
\pa_t u + \Div_\mx \bigl(\mff(u(t,\mx)) \bigr) =& \,\Div_\mx  \bigl( a(u(t,\mx)) \nabla_\mx u \bigr) \qquad \hbox{in} \ {\cal D}'(\R_+^{d+1})\;,
\end{split}
\end{equation} where $(t,\mx)\in \R_+^{d+1}=[0,\infty)\times \R^d$, $u\in L_{loc}^2(\R_+^{d+1})\cap L^1(\R_+^{d+1})$ is an unknown function, $\mff \in C^1(\R;\R^d)$ and the diffusion matrix function
$a=(a_{ij})_{i,j=1,\dots,d}\in C^{0,1}(\R;\R^{d\times d})$ satisfies b) form Introduction.

Definition of an entropy solution to \eqref{d-p-hom} is introduced in \cite[Definition 2.2]{CP}. We have already used its generalisation from \cite{CK} in the previous section and we shall keep the same notation bearing in mind that the coefficients are $(t,\mx)$--independent.

We note that in the homogeneous situation, the objects $\beta_k$ from \eqref{beta-es} become
\begin{equation}
\label{beta-es-hom}
\beta_k(t,\mx,\lambda)=2\delta(u(t,\mx)-\lambda) \sum\limits_{s=1}^d 
\pa_{x_s} \int_0^{u(t,\mx)}\sigma_{sk}(w)\,dw \;, \quad k=1,2,\dots,d\,.
\end{equation}
In other words, for any $\rho\in C_0(\R)$ and $\varphi\in L^2_c(\R^{d+1}_{+})$, we have (see also \eqref{beta-es-1})
\begin{equation}\label{eq:beta_interpretation}
\begin{aligned}
\int_{\R^{d+1}_+} & \varphi(t,\mx) \int_{\R} \rho(\lambda) d\beta_k(t,\mx,\lambda) \,dtd\mx\\
&=2\int_{\R^{d+1}_+} \varphi(t,\mx) \rho(u(t,\mx)) \sum\limits_{s=1}^d  \pa_{x_s}\int_0^{u(t,\mx)}\sigma_{sk}(w)\,dw \,dt d\mx \,.
\end{aligned}
\end{equation} 
Furthermore, in the same way as in \eqref{beta-bound}, we have $\beta_k\in L^2_{loc}(\R^{d+1}_+;{\cal M}(\R))$ in the sense of \eqref{beta-bnd}. The following lemma is a direct corollary of Lemma \ref{condf}. It is obtained by simply removing the $(t,\mx)$--dependency of the coefficients in Lemma \ref{condf}.
%we have
%\begin{align*} 
%\Bigl| \int_{\R^{d+1}_+} & \varphi(t,\mx) \int_{\R} \rho(\lambda) d\beta_k(t,\mx,\lambda) \,dtd\mx \Bigr| \\
%&\lesssim \bigl\| \|\varphi(\cdot_t,\cdot_\mx)\rho(\cdot_\lambda)\|_{C_0(\R_\lambda)} \bigr\|_{L^2(\R^{d+1}_+)}=\|\varphi(\cdot_t,\cdot_\mx) \|_{L^2(\R^{d+1}_+)} \|\rho(\cdot_\lambda)\|_{C_0(\R_\lambda)} \,.
%\end{align*}

\begin{lemma}
	%\label{condf-hom}
	A bounded entropy solution $u$ to \eqref{d-p-hom} (in the sense of \cite[Definition 2.2]{CP}) satisfies the following equality
	\begin{equation*}
	%\label{regularity-2-hom}
	\begin{split}
	&\sum\limits_{k,j=1}^d \pa_{x_k x_j}\big( a_{kj}(\lambda)\, {\rm sgn}(u(t,\mx)-\lambda) \big)= \sum\limits_{k,j=1}^d \pa_{x_j} \bigl(\sigma_{kj}(\lambda) \beta_k(t,\mx,\lambda)\bigr)
	\end{split}
	\end{equation*} 
	in the sense of distributions, where ${\rm sgn}$ denotes the signum function and $\beta_k$ are given by \eqref{beta-es-hom}.
\end{lemma}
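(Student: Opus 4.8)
The plan is to read the statement off from Lemma~\ref{condf} by specialising it to $(t,\mx)$--independent coefficients. When $\mff$ and $a$ (and hence $\sigma$) depend on $\lambda$ only, every term in \eqref{regularity-2} and \eqref{beta-es} that carries an $x$--derivative of a coefficient --- that is, $a_{kj,x_j}$ and $\sigma_{sk,x_s}$ --- vanishes identically, so \eqref{regularity-2} reduces precisely to the asserted identity and \eqref{beta-es} reduces to \eqref{beta-es-hom}. It therefore suffices to check that the proof of Lemma~\ref{condf} still applies here, the only nominal obstruction being that it was stated for $a\in C^2$ whereas now $a\in C^{0,1}$.

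That gap is superficial. Inspecting the proof of Lemma~\ref{condf}, the $C^2$ hypothesis is used only to give meaning to the first $x$--derivatives $\sigma_{sk,x_s}$ of the diffusion coefficient, which are absent in the present situation; what actually enters the computation is merely that $\sigma$ is bounded and continuous, and this holds because $a\in C^{0,1}(\R;\R^{d\times d})$ is continuous and may be taken to have the continuous (principal, symmetric, positive semidefinite) square root $\sigma$. Equivalently, one may mollify $\sigma$ in $\lambda$ to obtain smooth symmetric $\sigma_\delta\to\sigma$ locally uniformly, set $a_\delta:=\sigma_\delta^2\in C^\infty$ (still symmetric positive semidefinite, with $a_\delta\to a$ locally uniformly), apply Lemma~\ref{condf} verbatim to the equation with diffusion $a_\delta$, and pass to the limit $\delta\to0$ using a standard stability estimate for entropy solutions with respect to the diffusion coefficient; neither route requires anything beyond what is already available.

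Carrying this out, I would argue exactly as in the proof of Lemma~\ref{condf}. Let $(u_\eps)$ be the vanishing--viscosity approximations whose strong $L^1_{loc}(\R^{d+1}_+)$ limit is the entropy solution $u$ to \eqref{d-p-hom} (in the sense of \cite[Definition~2.2]{CP}; see also \cite{CP}), which we may take to be smooth after smoothing the initial data, or else proceed via the mollification of $\sigma$ indicated above. For each $\eps$ one has $\beta^\eps_k=\sum_{s=1}^d\sigma_{sk}(\lambda)\,\pa_{x_s}\sgn(u_\eps(t,\mx)-\lambda)$, and the identity
$$
\sum_{k,j=1}^d \pa_{x_k x_j}\bigl(a_{kj}(\lambda)\,\sgn(u_\eps-\lambda)\bigr)=\sum_{k,j=1}^d \pa_{x_j}\bigl(\sigma_{kj}(\lambda)\,\beta^\eps_k\bigr)
$$
holds in ${\cal D}'(\R^{d+1}_+)$, since $a_{kj}(\lambda)$ and $\sigma_{kj}(\lambda)$ are constant in $\mx$ and the identity then reduces to the pointwise matrix relation $\sum_{k}\sigma_{kj}(\lambda)\sigma_{sk}(\lambda)=a_{sj}(\lambda)$, i.e.~to $a=\sigma^2$ with $\sigma=\sigma^T$. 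Letting $\eps\to0$: the left-hand side converges in ${\cal D}'$ by the strong $L^1_{loc}$ convergence of $u_\eps$; for the right-hand side I would pair $\sigma_{kj}(\lambda)\beta^\eps_k$ against $\varphi\otimes\rho$ and integrate by parts exactly as in \eqref{beta-es-1}, rewriting the pairing as an integral of $\int_0^{u_\eps(t,\mx)}\rho(w)\sigma_{sk}(w)\,dw$ against first derivatives of $\varphi$, whereupon dominated convergence (using $\sigma$ bounded and continuous and $u_\eps\to u$ a.e.~along a subsequence) identifies the limit with the $\beta_k$ of \eqref{beta-es-hom}, interpreted as in \eqref{eq:beta_interpretation}; the bound analogous to \eqref{beta-bound} then gives $\beta_k\in L^2_{loc}(\R^{d+1}_+;{\cal M}(\R))$ in the sense of \eqref{beta-bnd}. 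The one step meriting care --- the ``hard part'', such as it is --- is precisely this last identification, namely that the chain-rule interpretation \eqref{eq:beta_interpretation} of $\beta_k$ is stable under the vanishing--viscosity limit; but this is exactly the homogeneous instance of the corresponding passage already performed in the proof of Lemma~\ref{condf}, now with $\sigma$ independent of $(t,\mx)$.
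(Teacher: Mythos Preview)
Your proposal is correct and follows the same route as the paper, which simply states that the lemma ``is a direct corollary of Lemma~\ref{condf}\dots obtained by simply removing the $(t,\mx)$--dependency of the coefficients'' without further detail. You go further by explicitly addressing the nominal regularity mismatch ($a\in C^{0,1}$ here versus $a\in C^2$ in Lemma~\ref{condf}) and by spelling out the vanishing--viscosity passage in the homogeneous setting; this extra care is sound and in fact fills a small gap that the paper leaves implicit.
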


%
%We are going to prove a regularity result for entropy solutions to \eqref{d-p-hom} under globally defined non-degeneracy conditions given by
%
%\begin{equation}
%\label{nondeg-1}
%\sup\limits_{\mxi \in \R^d} 
%	\operatorname{meas}\bigl\{\lambda \in K :\, \big|\langle f(\lambda)\,|\,\frac{\mxi}{|\mxi|}
%	\rangle \big|^2+|\mxi|^{1-r}\langle a(\lambda)\frac{\mxi}{|\mxi|} \,|\,\frac{\mxi}{|\mxi|} \rangle \leq \eps \bigr\}\leq \eps^{\tilde{\alpha}(r)}
%\end{equation} for some small enough $r>0$ and some $\tilde{\alpha}(r)>0$. 

At this moment, to avoid proliferation of symbols, we shall incorporate the variable $t$ into the space variables. Moreover, we shall introduce  a simplified variant of the definition of entropy solutions (in accordance to the one from \cite{CP}) since it is enough for the proof of the solution regularity.

\begin{definition}
\label{def-entropy}
We say that the function $u\in L^2(\R^d)\cap L^1(\R^d)$ is an entropy solution to 
\begin{equation}
\label{d-p-cauchy}
\begin{split}
\Div_\mx \bigl(\mff( u(\mx)) \bigr) =& \,\Div_\mx  \left( a(u(\mx)) \nabla u \right)
%u|_{t=0}=&u_0\in L^2(\R^d)
\end{split}
\end{equation} if for $h(\mx,\lambda)={\rm sgn}(u(\mx)-\lambda)$ the following kinetic equation is satisfied (recall that $f=\mff'$)
\begin{equation}
\label{q-sol-reg-1}
\begin{split}
&\Div_\mx \bigl( {f}(\lambda)
h\bigr)- 
\sum\limits_{k,j=1}^d \pa_{x_j x_k} \bigl( a_{kj}(\lambda) \, h \bigr)= -\pa_\lambda
\zeta(\mx,\lambda)
\end{split}
\end{equation} 
for a non-negative measure $\zeta\in {\cal M}_{loc}(\R^d\times \R) \cap C(\R_\lambda;{\cal M}_{loc}(\R_{\mx}^d))$ together with the following regularity condition %for every $\rho \in C^1_c(\R)$
\begin{equation}
\label{regularity-1}
\begin{split}
&\sum\limits_{k,j=1}^d \pa_{x_k x_j}\big( a_{kj}(\lambda) {\rm sgn}(u(\mx)-\lambda) \big)= \sum\limits_{k,j=1}^d \pa_{x_j} \big(\sigma_{kj}(\lambda) \beta_k(\mx,\lambda)\big)
\end{split}
\end{equation} 
in the sense of distributions, where $\beta=(\beta_1,\dots,\beta_d)$ 
and
\begin{equation}
\label{beta-es-reg}
\beta_k(\mx,\lambda):=2\delta(u(\mx)-\lambda) \sum\limits_{s=1}^d  \pa_{x_s}\Bigl(\int_0^{u(\mx)} \sigma_{sk}(w) \,dw \Bigr) \;, \quad k=1,2,\dots,d\,,
\end{equation} 
is understood in the sense of \eqref{eq:beta_interpretation} and it is bounded in $L_{loc}^2(\R^d;{\cal M}_{loc}(\R))$.
\end{definition}

Now we present the main result of the section on the regularity property for entropy solutions to \eqref{d-p-cauchy} 
(and then \eqref{d-p-hom} as well) under globally defined non-degeneracy conditions \eqref{non-deg-stand}.

\begin{theorem}
	\label{t-regularity} 
	Let $d\geq 2$, $I\subseteq \R$ be a compact interval, $\mff\in C^{1,1}(I;\Rd)$ and let $a\in C^{0,1}(I;\R^{d\times d})$ 
	satisfies {\rm b)} (on $I$) from Introduction. Furthermore, assume that 
	the non-degeneracy condition \eqref{non-deg-stand} is satisfied for some $\alpha, \delta_0>0$ 
	on $I$.	
	
	If $u$ is the entropy solution to \eqref{d-p-cauchy} in the sense of Definition \ref{def-entropy}, then for every $\rho\in C^1_c(I)$, $q \in (1,q_*)$ and $s\in (0,s_*)$ it holds $\int_I h(\mx,\lambda) \rho(\lambda) d\lambda \in W_{loc}^{s,q}(\R^d)$,
	where
	\begin{align*}
	q_*&=q_*(\alpha,d):=\frac{2\alpha+2(2d+6)}{2\alpha+2d+6} \;,\\
		s_*&=s_*(\alpha,d):=\frac{\alpha^2(\alpha+4)}{(3\alpha+8)\bigl((\alpha+4)(\alpha+2(d+1))+2\alpha\bigr)} \;.
	\end{align*}
\end{theorem}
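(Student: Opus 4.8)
The strategy is a quantitative version of the argument in Theorem \ref{VA-thm}, carried out on Littlewood--Paley dyadic blocks of the dual variable $\mxi$ rather than through a qualitative H-measure argument. Fix $\rho\in C^1_c(I)$ and write $v(\mx):=\int_I h(\mx,\lambda)\rho(\lambda)\,d\lambda$. I would truncate $h$ to a fixed compact support (using that $u\in L^1\cap L^2$ so the relevant $\mx$-region contributing to Sobolev regularity may be localised), and then analyse $\chi_j:=\mathcal{A}_{\varphi_j}v$, where $\{\varphi_j\}$ is a dyadic partition of unity on $\Rd_{\mxi}$ supported in annuli $|\mxi|\sim 2^j$. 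The key point is to estimate $\|\chi_j\|_{L^q}$ (or an appropriate $L^r$ norm) decaying geometrically in $j$, which upon summation yields membership in $W^{s,q}_{loc}$ with $s,q$ as claimed.

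\textbf{Key steps.} First I would split the dual space, for each fixed $\mxi$ on the sphere and each scale, according to whether the full symbol of \eqref{q-sol-reg-1} is ``non-degenerate'' or not. Concretely: test \eqref{q-sol-reg-1} against $\rho(\lambda)$ times a suitable multiplier applied to a dual test sequence, exactly as in the proof of Theorem \ref{VA-thm}, but now keeping track of the size of $|\xi_0+\langle f(\lambda)\,|\,\mxi\rangle|^2+\langle a(\lambda)\mxi\,|\,\mxi\rangle$. For the region where this quantity is $\gtrsim\delta$ (for a parameter $\delta=\delta(2^j)$ to be optimised), one inverts the symbol and gains regularity quantitatively: this is where the second-order diffusion term contributes its smoothing, and where the regularity condition \eqref{regularity-1} (condition f)) is used to rewrite the second-order term as a first-order term in the $\sigma_{kj}\beta_k$ form, ensuring the $\beta_k$'s only enter at the level of $L^2(\Omega;\mathcal{M}(K))$ bounds as in Theorem \ref{H-for-diffusion}. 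For the complementary ``degenerate'' region, one uses the measure-estimate \eqref{non-deg-stand}: the set of $\lambda\in I$ where the symbol is $\leq\delta$ has measure $\lesssim\delta^\alpha$, so the contribution of $\int_I\rho(\lambda)h\,d\lambda$ restricted to that $\lambda$-set is small in $L^\infty$ (hence $L^q$), with a gain of a power $\delta^{\alpha}$ (or $\delta^{\alpha/2}$, depending on how the sign-function truncation interacts). One then optimises $\delta=\delta(2^j)$ to balance the two contributions on each dyadic block, extracts the exponent, and sums over $j$ using the Littlewood--Paley square-function characterisation of $W^{s,q}$ (valid for $q>1$ by Theorem \ref{thm:m1}-type multiplier bounds and the Marcinkiewicz theorem).

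\textbf{Main obstacle.} I expect the principal difficulty to be the careful bookkeeping of the exponents $q_*$ and $s_*$: one must simultaneously control the $L^q$ integrability loss coming from the Riesz-potential-type gain $\mathcal{A}_{1/|\mxi|^s}$ (which maps $L^r\to W^{s,r}$ only for $r<d/s$, cf.~\cite[Chapter V]{stein}), the interpolation between the $L^2$-based estimate on the $\beta_k$-terms and the $L^q$-based estimate on the $h_n,P_n$-terms (recall $h\in L^q$, $q>2$, from the entropy formulation is replaced here by the boundedness $|h|\le 1$, giving integrability up to any exponent on a compact set), and the measure bound $\delta^\alpha$ from \eqref{non-deg-stand}. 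The coupling of these three constraints through the optimisation in $\delta$ is what produces the somewhat intricate formulas for $q_*(\alpha,d)$ and $s_*(\alpha,d)$, and getting each inequality sharp (rather than merely qualitative) is the heart of the matter. A secondary technical point is justifying that the measure-valued $\beta_k$ may be handled by the multiplier operators of Lemma \ref{L-ocjena-L1} and Lemma \ref{l-meas} at each dyadic scale with constants depending polynomially on $2^j$ via $\|\psi(2^j\cdot)\|_{C^{d+1}}$; this is routine given the machinery of Section \ref{sec:multipliers} but must be tracked to keep the geometric-series summation convergent.
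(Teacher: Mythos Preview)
Your overall architecture---dyadic decomposition, split by the size of the symbol, use \eqref{non-deg-stand} on the degenerate region, optimise $\delta$ per block---is correct and matches the paper. But there is a genuine missing idea in the non-degenerate part, and a mismatch in the interpolation scheme.

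\textbf{The missing idea.} You say that where the full symbol is $\gtrsim\delta$ one ``inverts the symbol and gains regularity quantitatively''. This is essentially the Tadmor--Tao/Gess--Hofmanov\'a route, and the reason those papers need an \emph{additional} non-degeneracy condition (their (2.20)) is precisely that after inversion one must integrate by parts in $\lambda$ against $\partial_\lambda\hat\zeta$, and this produces the $\lambda$-derivative of the inverse symbol, which is not controlled by \eqref{non-deg-stand} alone. The paper's novelty---and the step you have not identified---is to avoid touching $\partial_\lambda$ of the full inverse symbol by \emph{combining two distinct kinetic equations before inverting}. Concretely: take the second-order equation \eqref{q-sol-reg-1} divided by $|\mxi|^{1+r}$, and \emph{add} to it the first-order equation \eqref{q-sol-reg-2} (the one with $\sigma_{kj}\beta_k$) multiplied by $\langle f(\lambda)\,|\,\mxi'\rangle\,\Phi\bigl(2^{\epsilon J}\langle a(\lambda)\mxi'\,|\,\mxi'\rangle\bigr)$. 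The resulting composite symbol is
\[
\mathcal{L}_J(\mxi,\lambda)=\langle f(\lambda)\,|\,\mxi'\rangle^2\,\Phi_J(\mxi,\lambda)+\langle a(\lambda)\mxi'\,|\,\mxi'\rangle\,|\mxi|^{1-r}\,,
\]
and the $\beta_k$-terms that appear carry a factor $\sigma(\lambda)\mxi'$ which is $\lesssim 2^{-\epsilon K/2}$ on $\operatorname{supp}\Phi_J$ because $|\sigma\mxi'|^2=\langle a\mxi'\,|\,\mxi'\rangle$. This is how the diffusion structure is exploited to kill the dangerous terms; it is not merely that the $\beta_k$ sit in $L^2(\Omega;\mathcal{M}(K))$. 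The extra parameters $r,\epsilon\in(0,1)$ with the constraint $r+\epsilon<1$ (needed in Step IV so that $\Phi_J\equiv 1$ on the degenerate set) are what ultimately produce the specific $s_*$.

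\textbf{The interpolation scheme.} The paper does \emph{not} use a Littlewood--Paley square function in $L^q$. The non-degenerate piece is estimated in $L^1$ (via Lemma \ref{L-ocjena-L1} and Lemma \ref{l-meas}, tracking $C^{d+1}$-norms of symbols), the degenerate piece in $L^2$ (via Plancherel and \eqref{non-deg-stand}), and the two are combined by the real $K$-interpolation method $(L^2,L^1)_{\theta_*,\infty}$, landing in a Lorentz space and then in $L^q$ for $q<q_*=\frac{2}{2-\theta_*}$ with $\theta_*=\frac{\alpha}{\alpha+2(d+3)}$. This $L^1$/$L^2$ dichotomy, not an $L^q$-square-function argument, is what fixes both $q_*$ and $s_*$.
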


\begin{proof}
The proof of Theorem \ref{t-regularity} is divided into six steps. 
The first one is just a preliminary one in which we use the linear structure 
of the problem to localise coefficients and the solution on a compact set. 
In the second step we employ the method 
from Section \ref{sec:velocity_averaging} by which 
a new kinetic formulation is obtained. 
In the next two steps, we derive bounds in the case when the symbol is regular (Step 3) 
and then when it degenerates, i.e.~vanishes (Step 4).
The crucial point in Step 4 is the usage of the non-degeneracy condition
(and in fact that is the only place where it is used). 
The remaining part of the proof deals with the interpolation and derivation 
of the final estimate. This last part is already well established and standard
(cf.~\cite[pp.~2516--2518]{GH}).

\smallskip

\noindent \textbf{Step I.}~Since equation \eqref{q-sol-reg-1} is linear, we can multiply it by a cutoff function $\varphi\in C^2_c(\R_\mx^d)$ and we will end up with an equation of essentially the same type, 
but with the solutions $\varphi h$ which are compactly supported with respect to $\mx$. 
Indeed, by multiplying \eqref{q-sol-reg-1} by $\varphi \in C^2_c(\R^d)$ and rearranging, we get
\begin{equation*}
\begin{split}
\Div_\mx \bigl(f(\lambda) \,& \varphi(\mx) \, h(\mx,\lambda)\bigr)-\sum\limits_{k,j=1}^d \pa_{x_j x_k} \Bigl( a_{kj}(\lambda) \, \varphi(\mx) \, h(\mx,\lambda)\Bigr)\\
&= \langle\nabla \varphi(\mx) \,|\, f(\lambda)\rangle \, h(\mx,\lambda)-2\sum\limits_{k,j=1}^d \pa_{x_j} \Bigl( \pa_{x_k}\varphi(\mx) a_{kj}(\lambda) \,  \, h(\mx,\lambda)\Bigr)\\
& \quad\quad +\sum\limits_{k,j=1}^d  \Bigl( \pa_{x_k x_j}\varphi(\mx) a_{kj}(\lambda) \,  \, h(\mx,\lambda)\Bigr)
-\pa_\lambda \big( \varphi(\mx) \zeta(\mx,\lambda) \big) \,,
\end{split}
\end{equation*} and the terms on the right-hand side do not affect the proof since they are of lower order than the corresponding terms on the left-hand side (see derivation of \eqref{axi}). Similarly, we rewrite \eqref{q-sol-reg-1} using \eqref{regularity-1} 
\begin{equation}
\label{q-sol-reg-2}
\begin{split}
&\Div \bigl( {f}(\lambda)
h\bigr)- 
\sum\limits_{k,j=1}^d \pa_{x_j} \bigl( \sigma_{kj}(\lambda) \, \beta_k \bigr)= -\pa_\lambda
\zeta(\mx,\lambda) \,
\end{split}
\end{equation} 
and multiply it by $\varphi$ to obtain the equation of (almost) the same type:
\begin{equation*}
\begin{split}
\Div \bigl( {f}(\lambda) &
\varphi(\mx) h(\mx,\lambda) \bigr)- 
\sum\limits_{k,j=1}^d \pa_{x_j} \Bigl( \sigma_{kj}(\lambda) \, \varphi(\mx)  \beta_k \Bigr)\\
&= \langle\nabla \varphi(\mx) \,|\, {f}(\lambda)\rangle\,
 h(\mx,\lambda) - 
\sum\limits_{k,j=1}^d   \sigma_{kj}(\lambda) \, \pa_{x_j}\varphi(\mx) \, \beta_k  -\pa_\lambda
\big(\varphi(\mx)\zeta(\mx,\lambda)\big) \,.
\end{split}
\end{equation*} As before, the latter equation does not change the flow of the proof since we have terms of lower order on the right-hand side.

Therefore, in order to avoid proliferation of symbols, we shall assume in the rest of the proof that $h$ and $\zeta$, appearing in \eqref{q-sol-reg-1}, are compactly supported with respect to $\mx \in \R^d$. We have the same situation with the quantities $\beta\in L_{loc}^2(\R^d;{\cal M}_{loc}(\R))^d$ from \eqref{beta-es-reg} which, under the compact support assumption, actually writes as $\beta\in L^2(\R^d;{\cal M}_ {loc}(\R))^d$. 
\smallskip

\noindent \textbf{Step II.} Keeping the latter in mind, we can take the Fourier transform of \eqref{q-sol-reg-1} with respect to $\mx$ and divide such obtained relation by $|\mxi|^{1+r}$ for some fixed $r\in (0,1)$ to be determined later. We have after denoting $\mxi':=\frac{\mxi}{|\mxi|}$:  
\begin{equation}
\label{axi}
\begin{split}
&4\pi^2\langle a(\lambda) \mxi'\,|\,\frac{\mxi}{|\mxi|^{r}} \rangle \hat{h}(\mxi,\lambda) = -2\pi i\frac{\langle f(\lambda)\,|\,\mxi' \rangle}{|\mxi|^{r}} \hat{h}(\mxi,\lambda)-\frac{1}{|\mxi|^{1+r}}\pa_\lambda
\hat{\zeta}(\mxi,\lambda) \,,
\end{split}
\end{equation} 
where $\, \widehat{} \,$ denotes the Fourier transform with respect to $\mx$. The latter equation describes behavior of $\hat{h}$ on the support of $\langle a(\lambda) \mxi'\,|\,\mxi' \rangle$. 

The next step is to see what is happening out of the support of  $\langle a(\lambda) \mxi'\,|\,\mxi' \rangle$. According to the non-degeneracy conditions, this is equivalent to describing behavior of $\hat{h}$ on the support of $\langle f(\lambda)\,|\,\mxi' \rangle$.  
To this end, we use \eqref{q-sol-reg-2}. As in Section \ref{sec:velocity_averaging}, we aim to eliminate the influence of the term $\sum_{k,j=1}^d \pa_{x_j} \Bigl( \sigma_{kj}(\lambda) \, \beta_k \Bigr)$ in the latter expression. Therefore, we introduce non-negative real functions $\Phi, \Psi,\Psi_0 \in C_c^d(\R;[0,1])$, $\supp(\Psi)\subseteq (\frac{1}{2},2)$, $\supp\Psi_0\subseteq (0,2)$, such that
\begin{equation}
\label{functions}
\begin{split}
\Phi(z)=\begin{cases}
1, & |z|\leq 1\\
0, & |z|>2
\end{cases}\,,\\ 
\Psi_0(|\mxi|)+\sum\limits_{J\in \N} \Psi(2^{-J}|\mxi|)=1\,,
\end{split}
\end{equation}
(cf.~\cite[Excercise 5.1.1]{Gra}). Then, we take the Fourier transform of \eqref{q-sol-reg-2} and multiply such obtained equation by 
$$
-2\pi i\frac{\langle f(\lambda)\,|\, \mxi' \rangle}{|\mxi|} \, \Phi\Bigl(\frac{\langle a(\lambda) \mxi'\,|\,\mxi' \rangle}{\eps}\Bigr) \;,
$$  
where $\eps>0$ is arbitrary at this point.
We get
\begin{equation}
\label{fxi}
\begin{split}
&4\pi^2 \Phi\Bigl(\frac{\langle a(\lambda) \mxi'\,|\,\mxi' \rangle}{\eps}\Bigr) \,\langle {f}(\lambda)\,|\,\mxi'\rangle^2 \, \hat{h} - 
4\pi^2\Phi\Bigl(\frac{\langle a(\lambda) \mxi'\,|\,\mxi' \rangle}{\eps}\Bigr) \langle {f}(\lambda)\,|\,\mxi'\rangle \sum\limits_{k,j=1}^d  \Bigl( \sigma_{kj}(\lambda) \frac{\xi_j}{|\mxi|} \, \hat{\beta}_k \Bigr)\\
&\qquad \qquad = 2\pi i \, \frac{\langle {f}(\lambda)\,|\,\mxi'\rangle}{|\mxi|}\, \Phi\Bigl(\frac{\langle a(\lambda) \mxi'\,|\,\mxi' \rangle}{\eps}\Bigr) \pa_\lambda
\hat{\zeta}(\mxi,\lambda) \,.
\end{split}
\end{equation} 
We note that the Fourier transforms given above are well defined due to the compact support assumption.

For some $\epsilon\in (0,1)$ to be determined later and an arbitrary $J\in\N$, 
we specify $\eps$ by
$$
\eps=\eps_J:=2^{-\epsilon J} \,.
$$
Furthermore, we define 
\begin{equation*}%\label{eq:PsiJPhiJ}
\Psi_J(\mxi):=\Psi(2^{-J}|\mxi|) \qquad {\rm and} \qquad \Phi_J(\mxi,\lambda):=\Phi\Bigl(2^{\epsilon J} \langle a(\lambda) \mxi'\,|\,\mxi' \rangle\Bigr) \,.
\end{equation*}

After adding \eqref{axi} and \eqref{fxi} and multiplying such obtained expression by $\Psi_J(\mxi)$, we have
(from now on we mostly omit writing arguments of functions)
\begin{equation*}
%\label{sumfxiaxi}
\begin{split}
&4\pi^2 \Psi_J \, \Big(\langle {f}\, | \, \mxi'\rangle^2\Phi_J +\langle a\,\mxi' \, | \, \frac{\mxi}{|\mxi|^{r}} \rangle \Big) \, \hat{h}- 
 4\pi^2\Psi_J \, \langle {f}\, | \, \mxi'\rangle \Phi_J \sum\limits_{k,j=1}^d  \Bigl( \sigma_{kj} \frac{\xi_j}{|\mxi|} \, \hat{\beta}_k \Bigr)\\
&\qquad\qquad = -2\pi i\, \Psi_J \frac{\langle f\,|\,\mxi' \rangle}{|\mxi|^{r}} \hat{h}-\frac{\Psi_J}{|\mxi|^{1+r}}\pa_\lambda
\hat{\zeta}+ 2\pi i\,\Psi_J \, \frac{\langle {f}\, | \, \mxi'\rangle}{|\mxi|}\, \Phi_J \pa_\lambda
\hat{\zeta} \;.
\end{split}
\end{equation*}
Next, we denote  
\begin{equation*}%\label{eq:SymbolLJ}
{\cal L}_J(\mxi,\lambda):=\langle {f}(\lambda)\,|\,\mxi'\rangle^2 \, \Phi_J(\mxi,\lambda) +\langle a(\lambda)\mxi'\,|\,\mxi'\rangle|\mxi|^{1-r}
\end{equation*}
and sum the above identity with respect to $J\in \N$. We get (note that for any fixed $\mxi$ 
all sums are finite since at most three $\Psi_J$'s are non-zero) after dividing by $4\pi^2$:
\begin{equation}
\label{isolated-h}
\begin{split}
\biggl(\sum\limits_{J\in \N} & \Psi_J \, {\cal L}_J\biggr) \hat{h}- 
 \sum\limits_{J\in \N} \Psi_J \, \langle {f}\, | \, \mxi'\rangle \Phi_J \sum\limits_{k,j=1}^d  \Bigl( \sigma_{kj} \frac{\xi_j}{|\mxi|} \, \hat{\beta}_k \Bigr)\\
&= -\frac{i}{2\pi} \sum\limits_{J\in \N} \Psi_J \frac{\langle f\,|\,\mxi' \rangle}{|\mxi|^r} \hat{h}-\frac{1}{4\pi^2}\sum\limits_{J\in \N}\frac{\Psi_J}{|\mxi|^{1+r}}\pa_\lambda
\hat{\zeta}
	+ \frac{i}{2\pi}\sum\limits_{J\in \N}\Psi_J \, \frac{\langle {f}\, | \, \mxi'\rangle}{|\mxi|}\, \Phi_J \pa_\lambda
\hat{\zeta} \,.
\end{split}
\end{equation}
Here one can notice that $h$ is isolated on the left-hand side, which will allow us to obtain the desired bound after dividing by 
$\sum_{J\in \N}\Psi_J \, {\cal L}_J$ and multiplying by a certain power of $|\mxi|$. Notice also that the coefficient $\sum\limits_{J\in \N}  \Psi_J \, {\cal L}_J$ can be understood as a modification of the original symbol of \eqref{q-sol-reg-1} and that \eqref{isolated-h} essentially defines a new kinetic formulation of \eqref{d-p-cauchy}.

%The optimal power is dependent on the {\em quality} of the measure $\mu$, but in the worst case we know that 
% \todo{Provjeriti ostaje li $C$ po $\lambda$ i treba li biti $d\geq 2$}$\mu\in C(\R;{\cal M}_{loc}(\R^d))\subset L^1_{loc}(\R;W_{loc}^{-\alpha,r}(\R^d))$, for $\alpha \in (0,1)$ and $r\in [1,\frac{d}{d-\alpha})$. Hence, it holds 
%\begin{equation}
%\label{measure-alpha}
%\mu(\lambda,\mx)={\cal A}_{|\mxi|^\alpha} g(\cdot,\lambda)(\mx)
%\end{equation}
%for a function $g\in C(\R;L_{w,loc}^r(\R^d))$, where $L_{w,loc}^r(\R^d)$ denotes the weak topology in the space $L_{loc}^r(\R^d)$. 
\smallskip

\noindent\textbf{Step III.} At this moment we impose the following condition on parameters 
$r$ and $\epsilon$:
\begin{equation}\label{r-cond}
	r+\epsilon<1 \,.
\end{equation}

Let $K_0\geq 2$, which we specify in steps IV and V, and let us choose an arbitrary $K\in  \N$, $K\geq K_0$.
We test equation \eqref{isolated-h} with respect to $\lambda \in \R$ by 
$$
\Psi_{K}\frac{\rho(\lambda) \tilde{\Phi}\Big(\frac{\sum_{J\in \N}\Psi_J \, {\cal L}_J}{\delta}\Big)}{\sum_{J\in \N} \Psi_J {\cal L}_J} \,,
$$
where $\rho\in C^1_c(I)$, $\delta\in (0,\min\{1,\delta_0\})$ and $\tilde{\Phi}:=1-\Phi$
(later, in Step V, we specify $\delta$ and it will depend on $K$, but 
in order not to further complicate the notation, we will not explicitly state this since $K$ is at this moment fixed). 
By denoting 
$$
\tilde{\Phi}_\delta(\mxi,\lambda):=\tilde{\Phi}\biggl(\frac{\sum_{J\in \N}\Psi_J(\mxi)
	\, {\cal L}_J(\mxi,\lambda)}{\delta}\biggr) \,,
$$
we get

\begin{equation}
\label{isolated-h-0}
\begin{split}
&\int_{\R} \Psi_K \tilde{\Phi}_\delta \,  \rho \, \hat{h} \,d\lambda= 
 \sum\limits_{J=K-1}^{K+1} \int_{\R}\rho(\lambda)\frac{\tilde{\Phi}_\delta \Psi_K \Psi_J \, \langle {f}\, | \, \mxi'\rangle }{\sum\limits_{L\in \N}\Psi_L \, {\cal L}_L}\,\Phi_J \sum\limits_{k,j=1}^d  \sigma_{kj} \frac{\xi_j}{|\mxi|} \, d\hat{\beta}_k 
 \\
& \quad-\frac{i}{2\pi} \sum\limits_{J=K-1}^{K+1} \int_{\R}  \frac{\rho\,\tilde{\Phi}_\delta \Psi_K \Psi_J \frac{\langle f\,|\,\mxi' \rangle}{|\mxi|^{r}} \hat{h}}{\sum\limits_{L\in \N}\Psi_L \, {\cal L}_L}\, d\lambda
+ \frac{1}{4\pi^2}\sum\limits_{J=K-1}^{K+1} \int_{\R} \pa_\lambda \Bigl(\frac{\rho(\lambda) \tilde{\Phi}_\delta
}{\sum\limits_{L\in \N}\Psi_L \, {\cal L}_L}\Bigr)\frac{\Psi_K\Psi_J}{|\mxi|^{1+r}} \,d\hat{\zeta}(\mxi,\lambda)  \\
&\quad- \frac{i}{2\pi}\sum\limits_{J=K-1}^{K+1}\int_{\R}\pa_\lambda\Bigl(\frac{\rho\,\tilde{\Phi}_\delta  \, {\langle {f}\, | \, \mxi'\rangle}\, \Phi_J}{\sum\limits_{L\in \N}\Psi_L \, {\cal L}_L} \Bigr) 
\frac{\Psi_K \Psi_J}{|\mxi|} \,d\hat{\zeta}(\mxi,\lambda)  \;,
\end{split}
\end{equation}
where we have used that $\Psi_K\Psi_J = 0$ for $J\not\in\{K-1,K,K+1\}$.
The equality above is understood via Fourier multiplier operators. More precisely, we take the inverse Fourier transform of \eqref{isolated-h-0} with respect to $\mxi$, obtaining an equality in the sense of distributions. 
In what follows, we shall estimate the inverse Fourier transforms of each of the terms on the right-hand side of \eqref{isolated-h-0}.  
The aim is to show, with a proper choice of parameters $\epsilon, \delta$ and $r$, that there is \emph{room} to plug in $|\mxi|^s$, for some $s>0$, while keep the whole term finite when we sum up with respect to $K$. 
This will lead to the desired regularity result.

The estimation of the first two expressions on the right-hand side of the equality is carried out using Lemma \ref{L-ocjena-L1} (see also Remark 
\ref{Lp-bound}).
Thus, we need to derive estimates of the symbols in the space 
$C(\R_\lambda; C^{d+1}(\R^d_\mxi))$.
In all terms we have both $\Psi_K$ and $\rho$, hence the estimates
should be derived only for $|\mxi|\in\supp\Psi_K$ and $\lambda\in\supp\rho$.  

Since $|\mxi|\in\supp\Psi_K$ implies $\frac{1}{|\mxi|}\lesssim 2^{-K}$,
for $|\malpha|\leq d+1$ and $|\mxi|\in\supp\Psi_K$ we have (recall that $\mxi'=\mxi/|\mxi|$ and $0<\epsilon<1$):
\begin{equation*}
|\pa^\malpha_\mxi \Psi_K(\mxi)|\lesssim 2^{-|\malpha| K} \,,
	\quad |\pa^\malpha_\mxi \Phi_K(\mxi)| \lesssim 2^{-|\malpha|(1-\epsilon)K} \,,
	\quad |\pa^\malpha_\mxi\mxi'|\lesssim 2^{-|\malpha| K}\,,
\end{equation*}
where $\lesssim X$ means $\leq C X$ for a constant $C$ 
independent of parameters appearing in $X$ (sometimes we will using the index 
emphasise on which quantities the constant depends).
Hence, for $|\mxi|\in\supp\Psi_K$, $\lambda\in\supp\rho$, $1\leq |\malpha|\leq d+1$ and 
$J\in \{K-1,K,K+1\}$ it holds
\begin{equation}\label{eq:first_alpha_big}
\bigl|\pa^\malpha_\mxi\bigl(\Psi_K\Psi_J\Phi_J(\cdot,\lambda)
	\langle f(\lambda)\,|\,\cdot\,'\rangle
	\sigma(\lambda)\cdot\,'\bigr)(\mxi)\bigr|
	\lesssim 2^{-|\malpha|(1-\epsilon)K} 
	\lesssim 2^{-(1-\epsilon)K}\,.
\end{equation}
On the other hand, for $|\malpha|=0$ we use the property of 
function $\Phi_J$, together with 
$\langle a(\lambda) \mxi'\,|\,\mxi'\rangle= |\sigma(\lambda) \mxi'|^2$,
to get
\begin{equation}\label{eq:first_alpha_zero}
\bigl|\Psi_K(\mxi)\Psi_J(\mxi)\Phi_J(\mxi,\lambda)
	\langle f(\lambda) \,|\, \mxi'\rangle
	\sigma(\lambda)\mxi'\bigr|
	\lesssim 2^{-\frac{1}{2}\epsilon K} \,.
\end{equation}
%Therefore, the overall conclusion for this part of the symbol is
%\begin{equation*}
%\bigl\|\Psi_K \Psi_J \Phi_J \langle f\,|\,\mxi'\rangle \,\sigma\mxi'\bigr\|_{C(\R_\lambda;C^{d+1}(\Rd;\Rd))} \lesssim 
%	2^{-(1-\epsilon)K} + 2^{-\frac{1}{2}\epsilon K} \,.
%\end{equation*}

Concerning the first term on the right-hand side of \eqref{isolated-h-0}, we are left with 
examining $\frac{\tilde \Phi_\delta}{\sum_{L\in\N}\Psi_L \mathcal{L}_L}$. Let us present the reasoning on $\tilde\Phi_\delta$, 
and then the conclusion for $(\sum_{L\in\N} \Psi_L\mathcal{L}_L)^{-1}$ will follow analogously. 
Obviously $\tilde \Phi_\delta(\mxi,\lambda)$ is bounded, i.e.~of the order 1. Further on, for the first order derivative we have
\begin{equation*}
\pa_{\xi_j} \tilde\Phi_\delta(\mxi,\lambda)
	= \frac{1}{\delta}\tilde\Phi' \Bigl(\frac{\sum_L \Psi_L\mathcal{L}_L}{\delta}\Bigr)
	\sum_{L=K-1}^{K+1} \Bigl((\pa_{\xi_j}\Psi_L)(\mxi)\mathcal{L}_L(\mxi,\lambda)
	+\Psi_L(\mxi)(\pa_{\xi_j}\mathcal{L}_L)(\mxi,\lambda)\Bigr) \,.
\end{equation*}
The expression under the summation sign is of the order $2^{-r K}$
(note that $|\mxi|^{1-r}$ appears in $\mathcal{L}_L(\mxi,\lambda)$). 
Hence, $\pa_{\xi_i} \tilde\Phi_\delta(\mxi,\lambda)$ is of the order
$\frac{1}{\delta}2^{-r K}$.
Now, each time a derivative hits the expression under the summaton sign we get $2^{-K}$, while the same with $\tilde\Phi'(\frac{\sum_L\Psi_L \mathcal{L}_L}{\delta})$ results 
in getting another $\frac{1}{\delta}2^{-r K}$ in the order. 
Since $\frac{1}{\delta}2^{-r K} \geq 2^{-K}$ (as $\delta, r\leq1$), 
we have
\begin{equation*}
\bigr|\pa^\malpha_\mxi\tilde \Phi_\delta (\mxi,\lambda)\bigl| 
	\lesssim \frac{1}{\delta^{|\malpha|}}2^{-r|\malpha| K} \;.
\end{equation*}
%Here we introduce a condition between $\delta, r$ and $K$:
%\begin{equation}\label{eq:delta_cond1}
%\delta > 2^{-rK} \,,
%\end{equation}
%implying that the worst case in the previous inequality is attained 
%for $|\malpha|=0$, i.e.~for any $\malpha\in\N_0^d$ it holds
%$\bigr|\pa^\malpha_\mxi\tilde \Phi_\delta (\mxi,\lambda)\bigl| 
%\lesssim 1$.
Analogously, 
\begin{equation*}
\biggr|\pa^\malpha_\mxi\biggl(\frac{1}{\sum_{L=K-1}^{K+1}\Psi_L
	\mathcal{L}_L}\biggr)(\mxi,\lambda)\biggl| 
	\lesssim \frac{1}{\delta^{1+|\malpha|}}2^{-r|\malpha| K}
%	\lesssim \frac{1}{\delta} 
\end{equation*}
(here we used that $(\mxi,\lambda)\in\supp\tilde\Phi_\delta$).
Thus, using the Leibniz product rule, for $0\leq |\malpha|\leq d+1$ we get
\begin{equation}\label{eq:first_tilde}
\biggr|\pa^\malpha_\mxi\biggl(\frac{\tilde\Phi_\delta}
	{\sum_{L=K-1}^{K+1}\Psi_L
	\mathcal{L}_L}\biggr)(\mxi,\lambda)\biggl| 
\lesssim \frac{1}{\delta^{1+|\malpha|}}2^{-r|\malpha| K} 
	\lesssim \frac{1}{\delta}\Bigl(1+\frac{1}{\delta^{d+1}}2^{-(d+1)rK}\Bigr)\;.
%\lesssim \frac{1}{\delta} \;.
\end{equation}

Combining \eqref{eq:first_alpha_big}, \eqref{eq:first_alpha_zero} and \eqref{eq:first_tilde}, 
together with the condition \eqref{r-cond}, Lemma \ref{L-ocjena-L1} provides the 
following bound corresponding to the first expression on the
right-hand side of \eqref{isolated-h-0}
(denoting $\beta=(\beta_1,\dots,\beta_d)$):
\begin{equation}
\label{beta}
\begin{split}
\biggl\| {\cal F}^{-1}\Big(\int_{\R} & \rho(\lambda)  \langle {f}(\lambda)\, | \, \mxi'\rangle  \,\frac{\tilde{\Phi}_\delta(\cdot,\lambda)\,  }{\sum\limits_{L\in \N}\Psi_L \, {\cal L}_L(\cdot,\lambda)}\,\Psi_K\, \Psi_J \, \Phi_J(\cdot,\lambda)  \sum\limits_{k,j=1}^d  \sigma_{kj}(\lambda) \frac{\xi_j}{|\mxi|} \, d\hat{\beta}_k\Big) \biggr\|_{L^1(\R^d)}\\
&\lesssim_{\rho,f,a,\beta} \frac{1}{\delta}\Bigl(2^{-(1-\epsilon)K}+2^{-\frac{1}{2}\epsilon K}\Bigr)
	\Bigl(1+\frac{1}{\delta^{d+1}}2^{-(d+1)rK}\Bigr) \,.
%&\lesssim_{\rho,f,\beta} \frac{1}{\delta}\Bigl(2^{-(1-\epsilon)K}+2^{-\frac{1}{2}\epsilon K}\Bigr) \,.
\end{split}
\end{equation}

Similarly, the group of second terms on the right-hand side of \eqref{isolated-h-0} is estimated as follows (see Remark \ref{Lp-bound}):
\begin{equation}
\label{2}
\begin{split}
\biggl\| {\cal F}^{-1}\Big(\int_{\R} \rho(\lambda) \frac{\tilde{\Phi}_\delta(\cdot,\lambda)\, \Psi_K\, \Psi_J \langle f(\lambda)\,|\,\mxi' \rangle \hat{h}(\cdot,\lambda)}{|\mxi|^{r} \sum\limits_{L\in \N}\Psi_L \, {\cal L}_L(\cdot,\lambda)} & \, d\lambda \Big)\biggr\|_{L^1(\R^d)}\\
&\lesssim_{\rho,f,a} \frac{2^{-rK}}{\delta} \Bigl(1+\frac{1}{\delta^{d+1}}2^{-(d+1)rK}\Bigr) \,.
%\lesssim_{\rho,f} \frac{2^{-rK}}{\delta} \,.
\end{split}
\end{equation} 
 
It remains to deal with the final two terms which we collect together. Thus, the symbol reads $\frac{1}{4\pi^2}\sum_{J=K-1}^{K+1} \Theta_J$,
where
\begin{equation*}
\Theta_{J}(\mxi,\lambda) :=\frac{\Psi_K(\mxi) \Psi_J(\mxi)}{|\mxi|} \, 
	\pa_\lambda \Biggl(\frac{\rho(\lambda) \tilde{\Phi}_\delta(\mxi,\lambda) }
	{\sum_{L=K-1}^{K+1}\Psi_L(\mxi) \, {\cal L}_L(\mxi,\lambda)}
	\Bigl(|\mxi|^{-r} - 2\pi i \,\Phi_J(\mxi,\lambda) \, \langle {f}(\lambda)\, | \, \mxi'\rangle\Bigr) \Biggr)
\end{equation*} 
(we used the fact that $\Psi_K$ and $\Psi_J$, $J\notin \{K-1,K,K+1\}$, have disjoint supports). 
After derivation we get (we do not write arguments of function, while
$'$ denotes the derivative of the function; with the exception of 
$\mxi'=\mxi/|\mxi|$):
\begin{equation}
\label{Mcz-1}
\begin{split}
\Theta_{J} &=\frac{\Psi_K\,\Psi_J}{|\mxi|} 
	\Biggl(\frac{\rho' \,\tilde{\Phi}_\delta }
	{\sum_{L}\Psi_L\, {\cal L}_L}
	\Bigl(|\mxi|^{-r} - 2\pi i \,\Phi_J \, \langle {f}\, | \, \mxi'\rangle\Bigr) \\
&\qquad + \frac{\rho \,\frac{1}{\delta}\tilde{\Phi}'\,
	\bigl(\sum_L \Psi_L(\pa_\lambda\mathcal{L}_L)\bigr) }
{\sum_{L}\Psi_L \, {\cal L}_L}
\Bigl(|\mxi|^{-r} - 2\pi i \,\Phi_J \, \langle {f}\, | \, \mxi'\rangle\Bigr) \\
&\qquad - \frac{\rho \,\tilde{\Phi}_\delta\,
	\bigl(\sum_L \Psi_L(\pa_\lambda\mathcal{L}_L)\bigr) }
{\bigl(\sum_{L}\Psi_L \, {\cal L}_L\bigr)^2}
\Bigl(|\mxi|^{-r} - 2\pi i \,\Phi_J \, \langle {f}\, | \, \mxi'\rangle\Bigr) \\
&\qquad - \frac{2\pi i\,\rho \,\tilde{\Phi}_\delta }
{\sum_{L}\Psi_L\, {\cal L}_L}
\Bigl(2^{\epsilon J}\Phi' \, 
	\langle a'\,\mxi'\,|\,\mxi'\rangle \, \langle {f}\, | \, \mxi'\rangle
	+ \Phi_J \, \langle {f'}\, | \, \mxi'\rangle\Bigr)\Biggr)  \,.
\end{split}
\end{equation}
According to Lemma \ref{l-meas} and Remark \ref{l-meas-rem}, 
we have:
\begin{equation}
\label{g-term}
\biggl\| \mathcal{F}^{-1}\Bigl(\int_\R \Theta_J(\cdot,\lambda)\,
	d\hat\zeta(\cdot,\lambda)\Bigr)\biggr\|_{\mathcal{M}(\Rd)}
	= \bigl\| \mathcal{A}_{\Theta_J}\zeta\bigr\|_{L^1(\R_\lambda;\mathcal{M}(\Rd))}
	\lesssim_{\rho,\zeta} \| \Theta_{J} \|_{C(\R; C^{d+1}(\R^d))} \,,
\end{equation} 
and it remains to estimate the summands in \eqref{Mcz-1} with respect to the given bound.

If we notice that 
\begin{align*}
\biggl|\partial^\malpha_\mxi \biggl(\sum\limits_{L=K-1}^{K+1}\Psi_L \, 
	(\pa_\lambda {\cal L}_L)\biggr)(\mxi,\lambda)\biggr| \lesssim {2^{(1-r) K}}
\end{align*} 
and take into account properties of functions 
$\Psi_J$, $\tilde{\Phi}_\delta$, and $\Phi_J$
(in particular, the estimates \eqref{eq:first_alpha_big},
\eqref{eq:first_alpha_zero} and \eqref{eq:first_tilde}), 
we have from \eqref{Mcz-1} and \eqref{g-term}
\begin{equation}
\label{g-term-1}
\sum_{J=K-1}^{K+1}\bigl\|\mathcal{A}_{\Theta_J}\zeta\bigr\|_{L^1(\R_\lambda;
	\mathcal{M}(\Rd))}
\lesssim_{\rho, f,a, \zeta} \frac{1}{\delta} \Bigl(2^{-(1-\epsilon)K}
	+ \frac{1}{\delta} 2^{-r K}\Bigr)\Bigl(1+\frac{1}{\delta^{d+1}}2^{-(d+1)rK}\Bigr) \,.
%\lesssim_{\rho, f,a, \zeta} \frac{1}{\delta} 2^{-(1-\epsilon)K}
%	+ \frac{1}{\delta^2} 2^{-r K} \,.
\end{equation} 

Therefore, combining \eqref{beta}, \eqref{2} and \eqref{g-term-1}
we have
\begin{equation*}
\begin{split}
\Bigl\| \int_{\R} {\cal A}_{\Psi_K \tilde{\Phi}_{\delta}(\cdot,\lambda)}  
	& \bigl( \rho(\lambda) {h}(\cdot,\lambda)\bigr) \, d\lambda \Bigr\|_{L^1(\R^d)}\\
&\lesssim_{\rho, f,a, \beta,\zeta} \frac{1}{\delta}\Bigl(2^{-\frac{1}{2}\epsilon K}+2^{-(1-\epsilon)K}
	+ \frac{1}{\delta} 2^{-r K}\Bigr)\Bigl(1+\frac{1}{\delta^{d+1}}2^{-(d+1)rK}\Bigr) \,.
%	\lesssim \frac{1}{\delta} 2^{-(1-\epsilon)K} + \frac{1}{\delta}2^{-\frac{1}{2}\epsilon K} 
%	+ \frac{1}{\delta^2} 2^{-r K} \,,
\end{split}
\end{equation*}
where we have used that $0<\delta<1$. Furthermore, using \eqref{r-cond}
we have $\delta^{-1}2^{-(1-\epsilon)K}\leq \delta^{-2}2^{-rK}$. Thus, the previous estimate simplifies to
\begin{equation}\label{final-regular}
\begin{split}
\Bigl\| \int_{\R} {\cal A}_{\Psi_K \tilde{\Phi}_{\delta}(\cdot,\lambda)}  
 & \bigl( \rho(\lambda) {h}(\cdot,\lambda)\bigr)\, d\lambda \Bigr\|_{L^1(\R^d)} \\
&\lesssim_{\rho, f,a, \beta,\zeta} \Bigl(\frac{1}{\delta}2^{-\frac{1}{2}\epsilon K}+ \frac{1}{\delta^2} 2^{-r K}\Bigr)\Bigl(1+\frac{1}{\delta^{d+1}}2^{-(d+1)rK}\Bigr) \,.
%\lesssim  \frac{1}{\delta}2^{-\frac{1}{2}\epsilon K} 
%+ \frac{1}{\delta^2} 2^{-r K} \,.
\end{split}
\end{equation}

%Now, we want to choose the parameters $\epsilon, \eta'$, and $\eta''$ such that the sum of \eqref{beta}, \eqref{2}, and \eqref{g-term-1} over $k,J\in \N$ remains bounded. Clearly, by  \eqref{g-term-1}, we need to have 
%\begin{equation}
%\label{eps-cond}
% \epsilon<\frac{1}{1+d}.
%\end{equation} Now, we denote
%$$
%\tilde{\eps}=\min\big\{\frac{1 -(d+1)\epsilon }{d+1}, \, r  \big\}
%$$ and take
%$$
%\delta=\delta_k:=\min \big\{ \frac{1}{2},2^{(\eps-\tilde{\eps})k} \big\}
%$$ for a fixed $0<\eps<\tilde{\eps}$.
%
%We have thus handled the behavior of $\int_{\R} \rho{\cal A}_{|\mxi|^{\eta'}} h(\cdot,\lambda)  d\lambda $ on the dyadic blocks when the transformed symbol $\sum\limits_{J\in \N} {\cal L}_J \Psi_J$ is strictly greater than zero. In other words, using \eqref{beta}, \eqref{2}, and \eqref{g-term-1}, we have from \eqref{isolated-h-0}
%\begin{equation}
%\label{final-regular}
%\| \int_{\R} {\cal A}_{\Psi_k \tilde{\Phi}_{\delta_k}}  \big( \rho(\lambda) {h}(\cdot,\lambda) d\lambda\big) \|_{L^1(\R^d)} \lesssim 2^{-\eps k}.
%\end{equation} Not in passing that from here it follows that the Sobolev regularity of the velocity average $\int \rho(\lambda) {h}(\cdot,\lambda) d\lambda$ cannot be greater than $\frac{1}{d+1}$ (using this method).

\smallskip

\noindent\textbf{Step IV.} Now we turn to the situation when the modified symbol $\sum_{J\in \N} {\cal L}_J \Psi_J$ is close to zero, which is achieved by studying the symbol 
$1-\tilde\Phi_\delta$. 
Notice that a multiplier operator with a smooth compactly supported symbol $\psi$ generates an infinitely differentiable function, i.e. ${\cal A}_\psi: L^1(\R^d)\to C^\infty(\R^d)$. Thus, it is enough to inspect behavior of ${\cal A}_{1-\tilde{\Phi}_\delta}$ out of an arbitrary neighbourhood of the origin.  
This is the reason why we consider only functions $\Psi_K$ for $K$'s which are greater or equal than $K_0\geq 2$. 
The estimate is obtained by means of the non-degeneracy condition 
\eqref{non-deg-stand} and the condition \eqref{r-cond} will play an important role. 

For $K\geq K_0$ we have 
\begin{equation}
\label{aroundzero}
\begin{split}
\Bigl\|\int_{\R} & {\cal A}_{\Psi_K (1-\tilde{\Phi}_{\delta}(\cdot,\lambda))}
	\bigl(\rho(\lambda)h(\cdot,\lambda)\bigr) \, d\lambda \Bigr\|_{L^2(\R^d)}
	=\Bigl\| \, \Psi_K \, \int_{\R}(1-\tilde{\Phi}_\delta(\cdot,\lambda)) \rho(\lambda)\hat{h}(\cdot,\lambda)  \, d\lambda \Bigr\|_{L^2(\R^d)}\\
&\lesssim \sup\limits_{|\mxi| \sim 2^K}\operatorname{meas}\Bigl\{\lambda\in
	\supp(\rho):\, \sum\limits_{J\in \N} \Psi_J(\mxi) {\cal L}_J(\mxi,\lambda) 
	\leq 2\delta\Bigr\}^{1/2} \|h\|_{L^2(\R^{d+1})} \\
&\lesssim \sup\limits_{|\mxi|\sim 2^K} 
	\operatorname{meas}\Bigl\{\lambda \in \supp(\rho) :\, \big|\langle f(\lambda)\,|\, {\mxi'}
	\rangle \big|^2 + \langle a(\lambda){\mxi'} \,|\, {\mxi'} \rangle |\mxi|^{1-r}  \leq  2\delta \Bigr\}^{1/2}\, \|h\|_{L^2(\R^{d+1})}\\
&\lesssim \sup\limits_{|\mxi|=1} 
\operatorname{meas}\Bigl\{\lambda \in I :\, \big|\langle f(\lambda)\,|\, {\mxi'}
\rangle \big|^2 + \langle a(\lambda){\mxi'} \,|\, {\mxi'} \rangle  \leq  2\delta \Bigr\}^{1/2}\, \|h\|_{L^2(\R^{d+1})}\\
& \lesssim  \delta^{\alpha/2} \|h\|_{L^2(\R^{d+1})} \,,
\end{split}
\end{equation} 
where we recall that $\mxi'=\mxi/|\mxi|$. In the first two inequalities we have used properties of functions 
$\tilde\Phi_\delta$ and $\Psi_K$, while the third one follows from the fact that $\supp(\rho)\subseteq I$ 
and $|\mxi|^{1-r}\geq 1$ (since $1-r>0$ and $2^K> 1$).
The last step is a direct application of the
non-degeneracy condition \eqref{non-deg-stand}. Let us discuss the second 
inequality in more details. We take arbitrary $\mxi\in\supp\Psi_K$ and
$\lambda\in\supp(\rho)$ such that 
\begin{equation*}
%\label{nd-final-1}
\begin{aligned}
2\delta \geq
	\sum\limits_{J\in \N} \Psi_J(\mxi) {\cal L}_J(\mxi,\lambda)
	&=\bigl|\langle f'(\lambda)\, | \, \mxi' \rangle\bigr|^2 \sum\limits_{J\in \N} \Psi_J(\mxi)\Phi_J(\mxi,\lambda) +\langle a(\lambda)\mxi'\,| \, \frac{\mxi}{|\mxi|^r} \rangle \sum\limits_{J\in\N}\Psi_J(\mxi) \\
&= \bigl|\langle f'(\lambda)\, | \, \mxi' \rangle\bigr|^2 \sum\limits_{J=K-1}^{K+1} \Psi_J(\mxi)\Phi_J(\mxi,\lambda) +\langle a(\lambda)\mxi'\,| \, \mxi' \rangle |\mxi|^{1-r} \,,
\end{aligned}
\end{equation*}
and the last equality is due to the fact that $\sum_{J\in \N_0}\Psi_j=1$ (note that $\Psi_K\Psi_0=0$ since $K\geq 2$).
Hence (recall that $\mxi\in\supp\Psi_K$ and $r<1$), 	
\begin{equation}
\label{est-1}
\langle a(\lambda)\mxi'\,| \, \mxi' \rangle \leq 2\delta |\mxi|^{r-1} 
	\leq 2\delta\, 2^{(r-1)(K-1)}\,.
\end{equation} 

It is sufficient to prove that for $K$ large enough $\Phi_J$, $J\in\{K-1,K,K+1\}$, is identically
equal to 1 on $\supp\Psi_K\times\supp\rho$. 
Indeed, in that case we would have
\begin{equation*}
\bigl|\langle f'(\lambda)\, | \, \mxi' \rangle\bigr|^2 \sum\limits_{J=K-1}^{K+1} \Psi_J(\mxi)\Phi_J(\mxi,\lambda)
	= \bigl|\langle f'(\lambda)\, | \, \mxi' \rangle\bigr|^2 \sum\limits_{J=K-1}^{K+1} \Psi_J(\mxi)
	= \bigl|\langle f'(\lambda)\, | \, \mxi' \rangle\bigr|^2 \,.
\end{equation*}
To this end, we just need to study $\Phi_{K+1}$ (the worst case concerning what we want to show).
By \eqref{est-1} we have
$$
|2^{\epsilon (K+1)}\langle a(\lambda)\mxi'\,| \, \mxi' \rangle| 
	\leq 2\delta \, 2^{-K(1-r-\epsilon)} \, 2^{1-r+\epsilon}
	\leq 8 \cdot 2^{-K_0(1-r- \epsilon)} \,,
$$ 
where we have used that $\delta, r, \epsilon\in (0,1)$.
Since by \eqref{r-cond} we have $1-r-\epsilon>0$, for fixed $r$ and $\epsilon$
(they depend only on $\alpha$; see Step V)
we can choose $K_0$ large enough so that 
the right-hand side in the inequality above is less than 1.
Thus, by \eqref{functions} we have $\Phi_{K+1}(\mxi,\lambda)=1$.

\smallskip

\noindent\textbf{Step V.} From \eqref{final-regular} and \eqref{aroundzero} we are in position to use the K-interpolation method. The procedure is fairly standard (it is described in detail in \cite[pp.~2516--2518]{GH}), but for the completeness and to 
recognise the right order of the smoothing effect, we present the argument in 
a full detail. 

In this final technical part we finally specify parameters $r$ and $\epsilon$. 
For a positive and sufficiently small constant $c$ we define
\begin{equation}\label{eq:r-epsilon}
r:= \frac{\alpha+4}{3\alpha+8} - c\qquad \hbox{and} \qquad
	\epsilon:=\frac{2\alpha+4}{\alpha+4}r+2c \;.
\end{equation}
Note that with this choice the condition \eqref{r-cond} is fulfilled.

We denote
$$
H_J(\mx) :=\int_{\R} \rho(\lambda) {\cal A}_{\Psi_J}(h(\cdot,\lambda))(\mx)d\lambda\,, \ \ J\in \N \,,
$$
and introduce
$$
\mathcal{K}(\tau,H_K) := \inf\limits_{\substack{F_K^0\in L^2(\R^d), \, F_K^1\in L^1(\R^d)\\ F_K^0+F_K^1=H_K}}\Bigl( \|F_K^0\|_{L^2(\R^d)}+\tau\|F_K^1\|_{L^1(\R^d)} \Bigr) \,.
$$

By choosing $F^0_K=H_K$ and $F^1_K=0$ we have a trivial 
estimate of $\mathcal{K}(\tau, H_K)$ that is independent 
of $\tau$:
\begin{equation}\label{eq:KnormL2}
\mathcal{K}(\tau,H_K) 
	\leq \|\rho h\|_{L^2(\R^{d+1})} \,.
\end{equation}

On the other hand, for 
\begin{align*}
F_K^0 &= \int_{\R} {\cal A}_{\Psi_K (1-\tilde{\Phi}_{\delta}(\cdot,\lambda))}
\bigl(\rho(\lambda)h(\cdot,\lambda)\bigr) \, d\lambda \,,\\
F_K^1 &=\int_{\R} {\cal A}_{\Psi_K \tilde{\Phi}_{\delta}(\cdot,\lambda)}
\bigl(\rho(\lambda)h(\cdot,\lambda)\bigr) \, d\lambda \,,
\end{align*}
by \eqref{final-regular} and \eqref{aroundzero} we get
\begin{equation*}
\mathcal{K}(\tau,H_K) \lesssim \delta^{\alpha/2} +
	\tau\Bigl(\frac{1}{\delta}2^{-\frac{1}{2}\epsilon K}+ \frac{1}{\delta^2} 2^{-r K}\Bigr)\Bigl(1+\frac{1}{\delta^{d+1}}2^{-(d+1)rK}\Bigr)\;.
%	\tau\Bigl(\frac{1}{\delta}2^{-\frac{1}{2}\epsilon K} + \frac{1}{\delta^2} 2^{-r K}\Bigr) \;.
\end{equation*}
For a fixed $\tau$ and $K$, let us take $\delta=\tau^\eta 2^{-\omega K}$
(the only requirement on $\delta$ form the previous steps is $\delta\in(0,\min\{1),\delta_0\}$ on which we will comment later), where we choose
$\eta,\omega>0$ in an optimal way. 
The previous estimate then reads
\begin{equation}\label{eq:Knorm_tau1}
\begin{split}
\mathcal{K}(\tau,H_K) \lesssim \tau^{\frac{\alpha\eta}{2}}2^{-\frac{\omega\alpha}{2}K} 
	&+\tau^{1-\eta}2^{-(\frac{\epsilon}{2}-\omega)K}
	+\tau^{1-(d+2)\eta}2^{-\left(\frac{\epsilon}{2}-\omega+(d+1)(r-\omega)\right)K} \\
&+\tau^{1-2\eta} 2^{-(r-2\omega)K}
	+\tau^{1-(d+3)\eta} 2^{-\left(r-2\omega+(d+1)(r-\omega)\right)K}  \;.
\end{split}
\end{equation}

Now we choose $\eta$ and $\omega$ to maximise the minimum of the corresponding exponents, i.e.~we are solving
\begin{equation*}
\min\Bigl\{\frac{\alpha\eta}{2},1-\eta,1-(d+2)\eta, 1-2\eta, 1-(d+3)\eta\Bigr\} \longrightarrow \max
\end{equation*}
and
\begin{equation*}
\min\Bigl\{\frac{\omega\alpha}{2}, \frac{\epsilon}{2}-\omega,\frac{\epsilon}{2}-\omega+(d+1)(r-\omega),r-2\omega,r-2\omega+(d+1)(r-\omega)\Bigr\}
	\longrightarrow \max \,.
\end{equation*}
For $\eta$ we get $\eta=\frac{2}{\alpha+2(d+3)}$ (achieved by solving $\frac{\alpha\eta}{2}=1-(d+3)\eta$).
On the other hand, since obviously it should be $\omega<r$, we can omit the third and the fifth expression
in the minimisation problem for $\omega$. 
Moreover, the choice of parameters $r$ and $\epsilon$ made in \eqref{eq:r-epsilon}
ensures that the optimal $\omega$ is given by $\omega=\frac{2r}{\alpha+4}$. 

For the chosen values of $\eta$ and $\omega$, 
\eqref{eq:Knorm_tau1} reads
\begin{equation}\label{eq:Knorm_tau2}
\begin{split}
\tau^{-\theta_*} 2^{\frac{r\alpha}{\alpha+4}K}\mathcal{K}(\tau,H_K) 
	\lesssim 1 &+\tau^{\frac{2d+4}{\alpha+2(d+3)}} 2^{-c K} 
	+\tau^{\frac{2}{\alpha+2(d+3)}} 2^{-\left(c+(d+1)\frac{\alpha+2}{\alpha+4}r\right) K} \\
&+ \tau^{\frac{2d+2}{\alpha+2(d+3)}}
	+ 2^{-(d+1)\frac{\alpha+2}{\alpha+4}r K} \,,
\end{split}
\end{equation}
where $\theta_*:=\frac{\alpha}{\alpha+2(d+3)}$ and $c$ is given in \eqref{eq:r-epsilon}. 

The previous estimate is valid for any $\tau>0$ for which $\delta<\min\{1,\delta_0\}$. 
Let us take $T>0$ (to be determined later) such that for any $\tau\in (0,T)$ we have 
$\delta=\delta(\tau)<\min\{1,\delta_0\}$. 
Now, by applying the K-method of real interpolation we get
\begin{equation}\label{eq:reg_Kint_T}
\begin{split}
2^{\frac{r\alpha}{\alpha+4}K} & \|H_K\|_{(L^2,L^1)_{\theta_*,\infty}}
	\leq 2^{\frac{r\alpha}{\alpha+4}K}
	\bigl\|\tau^{-\theta_*}\mathcal{K}(\tau,H_K)
	\bigr\|_{L^\infty((0,\infty)_\tau)} \\
&\leq 2^{\frac{r\alpha}{\alpha+4}K}
\bigl\|\tau^{-\theta_*}\mathcal{K}(\tau,H_K)
\bigr\|_{L^\infty((0,T)_\tau)}+
2^{\frac{r\alpha}{\alpha+4}K}
\bigl\|\tau^{-\theta_*}\mathcal{K}(\tau,H_K)
\bigr\|_{L^\infty([T,\infty)_\tau)} \\
&\lesssim 1 +T^{\frac{2d+4}{\alpha+2(d+3)}} 2^{-c K} 
	+T^{\frac{2}{\alpha+2(d+3)}} 2^{-\left(c+(d+1)\frac{\alpha+2}{\alpha+4}r\right) K}
	+ T^{\frac{2d+2}{\alpha+2(d+3)}} \\
&\qquad + 2^{-(d+1)\frac{\alpha+2}{\alpha+4}r K}
	+ T^{-\theta_*}2^{\frac{r\alpha}{\alpha+4}K} \,,
\end{split}
\end{equation}
where in the last inequality we have used 
\eqref{eq:KnormL2} and \eqref{eq:Knorm_tau2}.

Let us now specify $T>0$. We take $T=2^{\gamma K}$ for $\gamma>0$ to be determined in an optimal way. 
More precisely, plugging this expression for $T$ into the previous estimate, 
we want to optimise all exponents containing $\gamma$, 
i.e.~we want that the maximum of
\begin{align*}
&\frac{2d+4}{\alpha+2(d+3)}\gamma-c \;,
	& &\frac{2}{\alpha+2(d+3)}\gamma-c+(d+1)\frac{\alpha+2}{\alpha+4}r \;,\\
&\frac{2d+2}{\alpha+2(d+3)}\gamma \;,
	& &\frac{r\alpha}{\alpha+4}-\frac{\alpha}{\alpha+2(d+3)}\gamma \;,
\end{align*}
is the smallest possible.
One can show that under the condition 
\begin{equation}\label{eq:c-cond}
	c\leq\frac{2\alpha(\alpha+4)}{(3\alpha+8)\bigl((\alpha+4)(\alpha+2(d+1))+2\alpha\bigr)}=:c_*
\end{equation}
the solution is 
\begin{equation*}
\gamma = \frac{\alpha+2(d+3)}{\alpha+2(d+2)}\Bigl(c+\frac{r\alpha}{\alpha+4}\Bigr) \,.
\end{equation*}

Let us first check that such $T$ is admissible, 
i.e.~that $T^\eta 2^{-\omega K}= 2^{-(\omega-\gamma\eta)}<1$. 
It is sufficient to show that $\omega-\gamma\eta>0$ (since we can take $K_0$ large enough). 
Taking into account values of $\omega, \eta, \gamma$ and $r$, we have
\begin{align*}
\omega-\gamma\eta &= \frac{2r}{\alpha+4} - 	\frac{2}{\alpha+2(d+3)}\frac{\alpha+2(d+3)}{\alpha+2(d+2)}\Bigl(c+\frac{r\alpha}{\alpha+4}\Bigr) \\
&=\frac{4(d+2)}{(\alpha+2(d+2))(3\alpha+8)} - \frac{2\alpha+4(d+4)}{(\alpha+2(d+2))(\alpha+4)}c \\
&\geq \frac{4}{(\alpha+2(d+2))(3\alpha+8)}\Bigl(d+2- \frac{\alpha(\alpha+2(d+4))}{(\alpha+2(d+1))(\alpha+4)+2\alpha}\Bigr)\,,
\end{align*} 
where in the last line we have applied \eqref{eq:c-cond}. 
Since the final expression is strictly positive, we indeed have 
$\omega-\gamma\eta>0$ for any positive $c$ satisfying \eqref{eq:c-cond}.
Moreover, since for $c=c_*$ we clearly have that $r$ is positive 
(see \eqref{eq:r-epsilon} and \eqref{eq:c-cond}), the value $c_*$ is 
really admissible for $c$. 

Returning to \eqref{eq:reg_Kint_T}, it is easy to see that for the obtained $\gamma$ 
the maximal value of the four expressions given above is 
\begin{equation*}
	\frac{2(d+2)}{\alpha+2(d+2)}\frac{r\alpha}{\alpha+4}-\frac{\alpha}{\alpha+2(d+2)}c \,,
\end{equation*}
which is obviously less than $\frac{r\alpha}{\alpha+4}$ (the exponent appearing on the 
left-hand side of \eqref{eq:reg_Kint_T}).
Therefore, after dividing \eqref{eq:reg_Kint_T} by $2^{\frac{r\alpha}{\alpha+4}K}$
the highest exponent on the right-hand side is given by
\begin{align*}
- \frac{\alpha}{\alpha+2(d+2)} \Bigl(\frac{r\alpha}{\alpha+4}+c\Bigr) 
	&= - \frac{\alpha}{\alpha+2(d+2)} \Bigl(\frac{\alpha}{3\alpha+8}+\frac{4}{\alpha+4}c\Bigr) \\
&\geq -\frac{\alpha^2(\alpha+4)}{(3\alpha+8)\bigl((\alpha+4)(\alpha+2(d+1))+2\alpha\bigr)}=:-s_* \,,
\end{align*}
where we took into account \eqref{eq:c-cond}. Therefore, it is optimal to take $c=c_*$.
Then from \eqref{eq:reg_Kint_T} we finally get
\begin{equation}\label{eq:interp_norm}
\|H_K\|_{(L^2,L^1)_{\theta_*,\infty}}
\lesssim 2^{-s_* K} \,.
\end{equation}

\smallskip

\noindent\textbf{Step VI.} It is left to follow 
the Conclusion of the proof of \cite[Theorem 3.1]{GH}.
More precisely, let us note first that $(L^2(\R^d),L^1(\R^d))_{\theta_*,\infty} $ is actually the Lorentz space which is a subset of the Lebesgue space $L^q(\R^d)$ for any $q>1$ such that
$\frac{1}{q}>\frac{1-\theta_*}{2}+\theta_*$ (see \cite[page 2578]{GH} and \cite[Section 1.18.6]{triebel};
note that by Step I of the proof $h$ is compactly supported with respect to $\mx$).
Then, using \eqref{eq:interp_norm}, it is left to sum with respect to $K\in\N$ the norms 
of $H_K$'s in the space $W^{s,q}(\R^d)$ for $q$ given above and $s<s_*$, obtaining
$$
\int_\R h(\cdot,\lambda) \rho(\lambda)\,d\lambda \in W^{s,q}(\R^d) \,.
$$ 

Going back to Step I, let us recall that $h$ is localised with respect to the variable $\mx$.
Thus, for the original solution $h$ to \eqref{q-sol-reg-1}, the result is the same but
for the loc-space $W_{loc}^{s,q}(\R^d)$.
The proof is over. 
\end{proof}

%We now use the standard $K$-interpolation method \cite{BL-book}. We denote
%\begin{equation*}
%\begin{split}
%\overline{f_0}(\mx)=\int_{\R} f_0(\mx,\lambda) \rho(\lambda) d\lambda\\
%\overline{f_1}(\mx)=\int_{\R}f_1(\mx,\lambda) \rho(\lambda) d\lambda
%\end{split}
%\end{equation*} By choosing appropriate $\theta>0$, we deduce from \eqref{final-regular} and \eqref{aroundzero} as in \cite[page 17]{GH}
%\begin{align*}
%&\inf_{\overline{f_0}(\mx)+\overline{f_1}(\mx)=\int_{\R}h(\mx,\lambda)\rho(\lambda)d\lambda}\big[\|\overline{f_0} \|_{\dot{W}^{\eps,2}(\R^d)(\R^d)}+t\|\overline{f_1}\|_{{\dot{W}^{\eta',1}(\R^d)}} \big]
%\\&\leq t^\theta (\|\beta\|_{L^1(\R^d;{\cal M}(\R))}+\|h\|_{L^1(\R^d\times \R)}+ \|\mu\|_{{\cal M}(\R^{d+1}}+1)^\gamma
%\end{align*} for some $\gamma>0$. From here, the standard interpolation argument (see e.g. \cite[page 153]{BL-book}) concludes the theorem. More precisely, by repeating the arguments from \cite[page 1496]{TT}, we get $u\in W_{loc}^{\frac{\tilde{\alpha} \eta'}{\tilde{\alpha}+2},1}(\R^d)$.

Let us illustrate the applicability of the previous theorem on the example given in 
\cite[Corollary 4.5]{TT}, where the result was not completely satisfactory as the 
regularising effect was not shown for all values of parameters (only for $n\geq 2l$).
We are able to prove the following.

\begin{corollary}
	\label{TT-ex} Let $n,l\in\N$. The two dimensional convection-diffusion equation 
	\begin{equation}
	\label{TT-1}
	\begin{split}
	\pa_t u(t,\mx) &+\left( \frac{\pa}{\pa x_1}+\frac{\pa}{\pa x_2} \right) \left(\frac{1}{l+1} u^{l+1}(t,\mx)\right)\\
	&-\left(\frac{\pa^2}{\pa x_1^2}-2\frac{\pa^2}{\pa x_1 \pa x_2}+\frac{\pa^2}{\pa x_2^2} \right) \left( \frac{1}{n+1}|u(t,\mx)|^{n}u(t,\mx) \right)=0 \;, \ \ \mx=(x_1,x_2)\,,
	\end{split}
	\end{equation} augmented with the initial data $u_0\in L^1(\R^2)\cap L^\infty(\R^2)$ admits 
	regularisation effects in the sense that for any $q\in (1,q_*)$ and $s\in (0,s_*)$ it holds 
	$$
	L^1\cap L^\infty(\R^2) \ni u_0 \mapsto u\in  W_{loc}^{s,q}(\R^3_+) \,
	$$ 
	where $u$ is the entropy solution to \eqref{TT-1} with $u|_{t=0}=u_0(\mx)$, and $q_*, s_*$ are given 
	in Theorem \ref{t-regularity} for $\alpha=\min\{\frac{1}{2l},\frac{1}{n}\}$ and $d=3$. 
\end{corollary}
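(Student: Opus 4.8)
The plan is to recast \eqref{TT-1} into the form covered by Theorem \ref{t-regularity}. Incorporating the time variable into the spatial ones, we work on $\R^3$ (so $d=3$) with space variable $\mx=(t,x_1,x_2)$, flux $\mff(\lambda)=\bigl(\lambda,\tfrac{\lambda^{l+1}}{l+1},\tfrac{\lambda^{l+1}}{l+1}\bigr)$ — whence $f(\lambda)=\mff'(\lambda)=(1,\lambda^l,\lambda^l)$ — and, since $\pa_{x_1}^2-2\pa_{x_1}\pa_{x_2}+\pa_{x_2}^2=(\pa_{x_1}-\pa_{x_2})^2$, diffusion matrix $a(\lambda)=|\lambda|^n N$, where $N\in\R^{3\times 3}$ has vanishing first row and column and lower-right $2\times 2$ block $\left(\begin{smallmatrix}1&-1\\-1&1\end{smallmatrix}\right)$. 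Because $N^2=2N$, we have $a=\sigma^2$ with $\sigma(\lambda):=\tfrac{1}{\sqrt2}|\lambda|^{n/2}N=\sigma(\lambda)^{T}\in L^\infty(I;\R^{3\times 3})$ for every compact $I\subseteq\R$, so b) holds on $I$; moreover $\mff\in C^\infty(I;\R^3)\subseteq C^{1,1}(I;\R^3)$, and since $\lambda\mapsto|\lambda|^n$ is Lipschitz on compacts for $n\geq 1$, $a\in C^{0,1}(I;\R^{3\times 3})$.

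First I would invoke that \eqref{TT-1} with $u|_{t=0}=u_0\in L^1\cap L^\infty(\R^2)$ possesses an entropy solution $u$ in the sense of \cite{CP} (equivalently, of Definition \ref{def-entropy} after the above change of variables), and that, the equation being autonomous, the maximum principle gives $\|u\|_{L^\infty(\R^3_+)}\leq\|u_0\|_{L^\infty(\R^2)}=:R$ (cf.~Remark \ref{remX}). It therefore suffices to fix the compact interval $I:=[-2R,2R]$, whose interior contains the essential range of $u$.

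The crux is to verify the non-degeneracy condition \eqref{non-deg-stand} on $I$ with exponent $\alpha:=\min\bigl\{\tfrac{1}{2l},\tfrac{1}{n}\bigr\}$. Fix $\mxi=(\xi_0,\xi_1,\xi_2)$ with $|\mxi|=1$ and set $p_+:=\xi_1+\xi_2$, $p_-:=\xi_1-\xi_2$, so that $\xi_0^2+\tfrac{p_+^2+p_-^2}{2}=1$, $\langle\mff'(\lambda)\,|\,\mxi\rangle=\xi_0+p_+\lambda^l$ and $\langle a(\lambda)\mxi\,|\,\mxi\rangle=p_-^2|\lambda|^n$. We must bound $\operatorname{meas}(S_\delta)$ with $S_\delta:=\{\lambda\in I:\ (\xi_0+p_+\lambda^l)^2+p_-^2|\lambda|^n\leq\delta\}$, uniformly in $\mxi$. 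As $\xi_0^2+\tfrac{p_+^2}{2}+\tfrac{p_-^2}{2}=1$, one of $|\xi_0|,|p_+|,|p_-|$ is $\geq\tfrac{1}{\sqrt3}$, and I would argue by cases. If $|p_-|\gtrsim 1$, then $|\lambda|^n\lesssim\delta$ on $S_\delta$, so $\operatorname{meas}(S_\delta)\lesssim\delta^{1/n}$. If $|p_+|\gtrsim 1$, then $S_\delta\subseteq\{|\xi_0+p_+\lambda^l|\leq\sqrt\delta\}$, i.e.~$\lambda^l$ lies in an interval of length $\lesssim\sqrt\delta$; since $\lambda\mapsto\lambda^l$ has a $\tfrac{1}{l}$-H\"older inverse on the bounded set $I$, $\operatorname{meas}(S_\delta)\lesssim\delta^{1/(2l)}$. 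In the remaining case $|\xi_0|\gtrsim 1$ with $|p_+|,|p_-|$ small: on $S_\delta$ one has $|p_+|\,|\lambda|^l\geq|\xi_0|-\sqrt\delta\gtrsim 1$ once $\delta$ is small, forcing $|p_+|\geq c(R,l)>0$; hence $S_\delta=\emptyset$ when $|p_+|$ is below this threshold, and otherwise the polynomial sublevel estimate again yields $\operatorname{meas}(S_\delta)\lesssim_{R,l}\delta^{1/(2l)}$. Thus $\operatorname{meas}(S_\delta)\lesssim\delta^{\min\{1/(2l),1/n\}}$ uniformly in $\mxi$, for $\delta\in(0,\delta_0)$ with $\delta_0$ small; this is \eqref{non-deg-stand} with $\alpha=\min\{\tfrac{1}{2l},\tfrac{1}{n}\}$. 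I expect this case analysis — in particular disposing of the $|\xi_0|\approx 1$, $p_+,p_-\approx 0$ regime via the boundedness of $u$ — to be the only step that is not pure bookkeeping.

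With all hypotheses in place, Theorem \ref{t-regularity} applies with $d=3$ and the above $\alpha$: for $h(\mx,\lambda)=\sgn(u(\mx)-\lambda)$, every $\rho\in C^1_c(I)$, every $q\in(1,q_*)$ and $s\in(0,s_*)$, one has $\int_I h(\cdot,\lambda)\rho(\lambda)\,d\lambda\in W^{s,q}_{loc}(\R^3)$, with $q_*,s_*$ as in Theorem \ref{t-regularity} for $\alpha=\min\{\tfrac{1}{2l},\tfrac{1}{n}\}$ and $d=3$. Finally, exactly as at the end of the proof of Theorem \ref{main-tvr}, a density argument lets one take $\rho=\chi_{[-R,R]}$; since $\int_{-R}^{R}\sgn(u(\mx)-\lambda)\,d\lambda=2u(\mx)$, this yields $u\in W^{s,q}_{loc}(\R^3_+)$ for all such $q$ and $s$, which — returning to the variables $(t,x_1,x_2)$ — is precisely the claimed regularisation $u_0\mapsto u\in W^{s,q}_{loc}(\R^3_+)$.
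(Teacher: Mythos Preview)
Your proof is correct and follows essentially the same route as the paper. The only cosmetic difference is that the paper first performs the linear change of spatial variables $y_1=x_1+x_2$, $y_2=x_1-x_2$, which diagonalises the diffusion and reduces the symbol to $(\xi_0+\xi_1\lambda^l)^2+\xi_2^2|\lambda|^n$; you achieve the same thing on the Fourier side by introducing $p_\pm=\xi_1\pm\xi_2$. The paper then splits into two cases ($\xi_2^2\geq 1/4$ versus $\xi_2^2<1/4$) and, for the second, simply cites the one-dimensional polynomial sublevel estimate from \cite[(3.7)]{TT}, whereas you unpack that estimate into two self-contained subcases ($|p_+|\gtrsim 1$ and $|\xi_0|\gtrsim 1$ with $p_\pm$ small, the latter using the boundedness of $u$ to force $|p_+|\gtrsim_{R,l}1$). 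A minor quibble: for even $l$ the map $\lambda\mapsto\lambda^l$ has no inverse, so ``$1/l$-H\"older inverse'' should be phrased as the sublevel-set estimate $\operatorname{meas}\{\lambda\in I:\lambda^l\in J\}\lesssim|J|^{1/l}$, but your conclusion is of course correct.
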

\begin{proof}
	We first notice that by introducing the following linear regular change of variables 
	$$
	y_1=x_1+x_2\,, \ \ y_2=x_1-x_2
	$$ 
	we can rewrite \eqref{TT-1} in the form
	\begin{equation*}
%	\label{TT-2}
	\begin{split}
	\pa_t u(t,\my) &+\frac{\pa}{\pa y_1} \Big(\frac{1}{l+1} u^{l+1}(t,\my)\Big)-\frac{\pa^2}{\pa y^2_2} \Big( \frac{1}{n+1}|u(t,\my)|^{n}u(t,\mx) \Big)=0 \,.
	\end{split}
	\end{equation*}
	It is obviously sufficient to show that the solution $u$ admits a regularising effect in these 
	new variables. 
	We will show that this equation satisfies the non-degeneracy condition \eqref{non-deg-stand}, 
	which is sufficient by Theorem \ref{t-regularity} to have a regularising effect 
	(see the end of the proof of Theorem \ref{main-tvr} for the connection between the solution and 
	the average; we also refer to \cite[sections 3 and 4]{TT} where the complete procedure is given). 
	
	In the notation of Definition \ref{def-entropy} (the time variable $t$ is incorporated into the space variables, as it was the case in the mentioned definition) we have
	$$
	f(\lambda)=(1,\lambda^l,0), \ \ a(\lambda)=\operatorname{diag}(0,0,|\lambda|^n)\,.
	$$
	For an arbitrary $M>0$ we will show that the non-degeneracy condition is satisfied on 
	$I:=[-M,M]$ (in fact it is sufficient to consider only $M=\|u_0\|_{L^\infty(\R^2)}$ since 
	the maximum principle ensures that a bounded entropy solution stays in $[-\|u_0\|_{L^\infty(\R^2)},\|u_0\|_{L^\infty(\R^2)}]$).
	For $\delta>0$ and $\mxi=(\xi_0,\xi_1,\xi_2)\in \mathrm{S}^2$, 
	i.e.~$\xi_0^2+\xi_1^2+\xi_2^2=1$, we define
	\begin{align*}
	\Lambda_\delta(\mxi) &:=\Bigl\{\lambda\in I : |\langle f(\lambda)\,|\,\mxi\rangle|^2+\langle a(\lambda)\mxi\,|\,\mxi\rangle \leq\delta\Bigr\} \\
	&\;=\Bigl\{\lambda\in I : (\xi_0+\lambda^l\xi_1)^2+|\lambda|^n\xi_2^2 \leq\delta\Bigr\} \;.
	\end{align*}
	Obviously $\Lambda_\delta(\mxi)=\Lambda_\delta(-\mxi)$. Hence, we can assume that $\xi_1\geq 0$.
	The aim is to prove that there exits $\alpha>0$ such that 
	$\operatorname{meas}\Lambda_\delta(\mxi)\lesssim\delta^\alpha$ for small values of $\delta$.
	
	We shall consider two cases
	$$
	(i) \ \ {\xi_2^2} \geq \frac{1}{4}\,; \qquad (ii) \ \ {\xi_2^2} < \frac{1}{4}\,.
	$$ 
	In the case (i), we clearly have 
	\begin{equation}
	\label{(i)}
	\begin{split}
	\operatorname{meas}\Lambda_\delta(\mxi) &\leq \operatorname{meas}\bigl\{\lambda \in I :\,  
		|\lambda|^n {\xi_2^2} \leq \delta \bigr\}  \\
	& \leq \operatorname{meas}\bigl\{\lambda \in I :\, |\lambda|^n \leq 4\delta \bigr\} 
		\lesssim  \delta^{\frac{1}{n}}\,.
	\end{split} 
	\end{equation} 
	
	In the case (ii), we have
	$$
	{\xi_0^2+\xi_1^2} > 3/4 \,.
	$$ 
	Neglecting the second order term we get
	$$
	\operatorname{meas}\Lambda_\delta(\mxi) \leq
		\operatorname{meas}\bigl\{\lambda \in I :\, |\xi_0+\lambda^l\xi_1| \leq \sqrt{\delta} \bigr\} \,.
	$$
	The term on the right hand side corresponds to the symbol of the one-dimensional scalar conservation law
	studied in \cite[(3.7)]{TT}. Hence, we have 
	\begin{equation}\label{(ii)}
	\operatorname{meas}\Lambda_\delta(\mxi)\lesssim \bigl(\sqrt{\delta}\bigr)^{\frac 1l}= \delta^{\frac{1}{2l}} \,.
	\end{equation}

	Therefore, from \eqref{(i)} and \eqref{(ii)}, we conclude that \eqref{non-deg-stand} holds with ${\alpha}=\min\{\frac{1}{2l},\frac{1}{n}\}$. Using Theorem \ref{t-regularity}, we conclude the proof of the theorem.
\end{proof}

Let us close the paper with a few remarks on non-degeneracy conditions. 

\begin{remark}\label{rem:non-deg}
\item[i)] In the regularity result for general symbols given in \cite{TT}, non-degeneracy 
conditions are composed of two parts: formulas (2.19) and (2.20) there. 
Our condition \eqref{non-deg-stand} can be related to the first one which reads:
there exist $\alpha, \beta>0$ and a finite interval $I\subseteq\R$ 
such that for every $J\gtrsim \N$ and $\delta>0$ it holds
\begin{equation}
\label{c-TT}
\sup\limits_{|\mxi|\sim J} {\rm meas}
\Bigl\{ 
\lambda \in I:\, 
|\langle f(\lambda) \,|\, \mxi \rangle|^2+
\langle a(\lambda)\mxi\,|\, \mxi \rangle^2 \leq\delta^2
\Bigr\}
\lesssim \left(\frac{\delta}{J^\beta}\right)^\alpha. 
\end{equation} 

It is not difficult to see that the condition above implies \eqref{non-deg-stand}. 
Indeed, let us take arbitrary $\delta\in (0,1)$ and $\mxi\in\Sdmj$. 
Let us fix $J\in\N$ such that \eqref{c-TT} is satisfied. 
Since $\delta<1$, we have
\begin{equation*}
\begin{split}
&|\langle f(\lambda) \,|\, \mxi \rangle|^2+
\langle  a(\lambda)\mxi\,|\, \mxi \rangle \leq\delta
\ \ \implies \\
&|\langle f(\lambda) \,|\, \mxi \rangle|^2+
\langle a(\lambda)\mxi\,|\, \mxi \rangle
	\geq |\langle f(\lambda) \,|\, \mxi \rangle|^2+
\langle a(\lambda)\mxi\,|\, \mxi \rangle^2
	\geq \frac{1}{J^2}\Bigl(|\langle f(\lambda) \,|\, \mxi \rangle|^2+
	J^2\langle a(\lambda)\mxi\,|\, \mxi \rangle^2\Bigr) \,.
\end{split}
\end{equation*}
Thus, 
\begin{equation*}
\begin{split}
\operatorname{meas} &
\big\{ 
\lambda \in I:\, 
|\langle f(\lambda) \,|\, \mxi \rangle|^2+
\langle a(\lambda)\mxi\,|\, \mxi \rangle \leq\delta
\big\}\\
& \leq \operatorname{meas}
\big\{ 
\lambda \in I:\, 
|\langle f(\lambda) \,|\, (J\mxi) \rangle|^2+
\langle a(\lambda)(J\mxi)\,|\, (J\mxi) \rangle^2 \leq\delta/J^4
\big\} \lesssim_J \delta^{\alpha/2}\,,
\end{split}
\end{equation*} 
which concludes the proof.

Therefore, with Theorem \ref{t-regularity} we are in position to obtain a regularity result
with much weaker assumptions then in \cite{TT} (e.g.~condition (2.20) is redundant). 
This generalisation turned out to be significant already on the example presented 
in Corollary \ref{TT-ex}. 

However, due to stronger assumptions, when the result of \cite{TT}
(see also \cite{GH}) is applicable, then one can expect better regularity results 
compared to our Theorem \ref{t-regularity}. For instance, in our result the regularity exponent 
$s$ cannot exceed $1/3$, which is not the case with the results of the aforementioned papers. 

\item[ii)] At first glance, one might think, based on Step IV of the proof of Theorem \ref{t-regularity}, 
that the non-degeneracy condition can be further weakened by replacing \eqref{non-deg-stand} with:
there exist $\alpha, R, \delta_0>0$ and $r\in (0,1]$ such that for any $\delta\in (0,\delta_0)$ 
it holds
\begin{equation*}
%\label{nondeg}
\sup\limits_{|\mxi|\geqslant R}  
\operatorname{meas}\Bigl\{\lambda \in I :\, |\langle f(\lambda)\,|\, \mxi'
\rangle |^2+|\mxi|^{1-r} \langle a(\lambda)\mxi' \,|\, \mxi' \rangle  \leq  \delta \Bigr\}\lesssim \delta^{\alpha} \,,
\end{equation*}
where $\mxi'=\mxi/|\mxi|$. However, it is not difficult to see that this condition is in fact 
equivalent with \eqref{non-deg-stand}. 

Nevertheless, if we modify the condition above in the same spirit as the condition 
\eqref{c-TT} is given, i.e.~on the level of \emph{dyadic} blocks, then the result of Theorem \ref{t-regularity}
can be slightly improved. More precisely, the condition should read:
there exist $\alpha, \beta, \delta_0>0$, $r\in (0,1]$ and a finite interval $I\subseteq\R$ 
such that for every $J\gtrsim \N$ and $\delta\in(0,\delta_0)$ it holds
\begin{equation*}
%\label{non-deg-r-J}
\sup\limits_{|\mxi|\sim J} {\rm meas}
\Bigl\{ 
\lambda \in I:\, 
|\langle f(\lambda)\,|\, \mxi'
\rangle |^2+|\mxi|^{1-r} \langle a(\lambda)\mxi' \,|\, \mxi' \rangle \leq\delta^2
\Bigr\}
\lesssim \left(\frac{\delta}{J^\beta}\right)^\alpha. 
\end{equation*} 
Then in \eqref{aroundzero} instead of $\delta^{\alpha/2}$ one would get $\delta^{\alpha/2}
2^{-\frac{1}{2}\alpha\beta K}$.
In this paper, we decided not to go in this direction with the aim of obtaining results with the simplest and most general condition possible. 
\end{remark}

%========================================================================
%===============================================================================
\section{Acknowledgements}\label{sec:thanks}
This work was supported in part by the Croatian Science Foundation under project UIP-2017-05-7249 (MANDphy), and by the project P 35508 of the Austrian Science Fund FWF.

Permanent address of D.~Mitrovi\'c is University of Montenegro, Podgorica, Montenegro.

\end{document}